\documentclass[11pt]{article}
\usepackage[T1]{fontenc}
\usepackage{lmodern}
\usepackage[a4paper,margin=1in]{geometry}
\usepackage{amsmath,amssymb,amsfonts,amsthm,mathtools}
\usepackage{booktabs,multirow,array}
\usepackage{tabularx}
\usepackage{graphicx,float}
\usepackage{placeins}
\usepackage{enumitem}
\usepackage{microtype}
\usepackage{xcolor}
\usepackage{hyperref}
\hypersetup{
	colorlinks=true,
	linkcolor=blue!55!black,
	citecolor=blue!55!black,
	urlcolor=blue!55!black,
	pdftitle={A Positivity-Preserving Expectation Scheme for HJB Equations with Oblique Robin Boundary Conditions},
	pdfauthor={Haoran Xu and Xingye Yue},
	pdfkeywords={Hamilton-Jacobi-Bellman equation, oblique Robin boundary condition, conditional expectation, reflected diffusion, positivity-preserving scheme, viscosity solution}
}
\setlist{leftmargin=2em}
\allowdisplaybreaks

\newtheorem{theorem}{Theorem}[section]
\newtheorem{proposition}[theorem]{Proposition}
\newtheorem{lemma}[theorem]{Lemma}

\theoremstyle{definition}
\newtheorem{definition}[theorem]{Definition}
\theoremstyle{remark}
\newtheorem{remark}[theorem]{Remark}

\newcommand{\R}{\mathbb{R}}
\newcommand{\E}{\mathbb{E}}
\newcommand{\cO}{\mathcal{O}}
\newcommand{\cG}{\mathcal{G}}
\newcommand{\cS}{\mathcal{S}}
\newcommand{\Tr}{\operatorname{Tr}}

\newcommand{\dist}{\operatorname{dist}}
\newcommand{\dt}{\Delta t}

\title{A Positivity-Preserving Expectation Scheme for Hamilton--Jacobi--Bellman Equations with Oblique Robin Boundary Conditions}
\author{Haoran Xu$^1$ \and Xingye Yue$^{2,*}$}
\date{}

\begin{document}
	\maketitle
	\begin{center}
		\small
		$^1$School of Mathematical Sciences, Soochow University, Suzhou 215006, China.\\
		$^2$Center for Financial Engineering, School of Mathematical Sciences, Soochow University, Suzhou 215006, China.\\
		$^*$Corresponding author: \texttt{xyyue@suda.edu.cn};
		Haoran Xu: \texttt{20244007005@stu.suda.edu.cn}.
	\end{center}
\begin{abstract}
	For anisotropic diffusion with mixed derivatives, standard compact
	coordinate-aligned stencils and some local finite-volume or finite-element
	constructions can lose nonnegative coefficients unless suitable coefficient
	or mesh conditions are imposed.  Here nonnegative coefficients are generated
	directly from conditional expectation rather than from an algebraic stencil
	decomposition. We construct a positivity-preserving scheme for
	possibly degenerate Hamilton--Jacobi--Bellman equations with controlled
	oblique Robin boundary conditions. At interior nodes the construction stems
	from a one-step reflected Feynman--Kac identity: an $m$-dimensional Rademacher
	vector generates $P=2^m$ equally probable weak-Euler branches, an exterior
	branch is mirrored about its oblique projection, and twice its overshoot
	serves as the discrete boundary local time $D$. The Robin coefficient enters
	only through the attenuation factors $e^{-\kappa D}$ and $De^{-\kappa D/2}g$,
	leaving the equal branch probabilities unchanged. Boundary nodes use a
	separate same-level closure over a spatial offset $\ell_h\asymp h$, which may
	be implicit; uniform obliqueness and $\mathbb P_1$ interpolation give a
	mesh-independent positive weight on interior nodes, so fixed linear Robin data
	yield a sparse nonsingular $M$-matrix system and the general control set
	yields a monotone contraction. Positivity is preserved for nonnegative data
	without a diagonal-dominance condition and without any CFL-type relation
	between $\Delta t$ and $h$.

Under $\dt,h\to0$ with $h^2/\dt\to0$ we prove consistency, uniform
stability, and convergence to the viscosity solution via half-relaxed limits.
For a classical solution $u\in C_b^3(\mathcal N_T)$ whose control-resolved
boundary residuals $q_b:=\mathcal L(t,\cdot,Du,u,b)$ are uniformly $C^{1,1}$
on $\partial\cO$, a three-term error decomposition and backward recursion give
$\max|U_i^n-u(t_n,x_i)|\le C(\dt^{1/2}+h^2/\dt)$.  The two terms are balanced
by $\dt\asymp h^{4/3}$, yielding $O(h^{2/3})$, whereas $\dt\asymp h$ gives
$O(h^{1/2})$.  Four two-dimensional studies show monotone error decay over
the reported refinements.  An angular-control sensitivity study is consistent
with the $O(K^{-2})$ directional defect.  In the smooth variable-Robin test,
the balanced schedule $\dt\propto h_{\rm real}^{4/3}$ gives final observed
orders $1.242$--$1.261$ over the displayed refinements, sharper than the
conservative $O(h^{2/3})$ bound.  Nonsmooth conical and mixed
Robin--Dirichlet examples are reported only as stress tests.

\end{abstract}

	\noindent\textbf{Keywords.} Hamilton--Jacobi--Bellman equation; oblique Robin boundary condition; conditional expectation; reflected diffusion; boundary local time; positivity preservation; viscosity solution; error estimate.
	
	\medskip
	\noindent\textbf{2020 Mathematics Subject Classification.} 65M12, 65M15, 49L25, 35D40.
	
\section{Introduction}

For anisotropic or degenerate diffusion, classical finite-difference,
finite-volume, and finite-element discretizations cannot in general be both
consistent and positivity-preserving unless the diffusion matrix is
diagonally dominant~\cite{MotzkinWasow1952,CrandallLions1996,NordbottenEtAl2007,SharmaHammett2007}.
Nonlinear compact corrections~\cite{GaoWu2015,KuzminShashkovSvyatskiy2009,LiEtAl2022,SharmaHammett2007,ZhangSuWu2017,ShengYuan2008,YuanSheng2007,YuanSheng2008}
and wide-stencil directional splittings~\cite{BonnansZidani2003,DebrabantJakobsen2013}
mitigate this at the price of complexity or boundary extrapolation, and
model-specific constructions exist~\cite{MaForsyth2017,XuChenLiuYue2019}.

Positivity is closely related to monotonicity, the comparison principle, and
the maximum principle.  For linear problems, positivity and monotonicity are
equivalent; for nonlinear problems, positivity is necessary but not
sufficient for monotonicity.  In discrete schemes, monotonicity generally
ensures $L^\infty$-stability, which is essential for reliable computations.
For nonlinear problems, the viscosity-solution framework provides the
theoretical foundation: consistent and monotone schemes converge to the
viscosity solution of the underlying PDE~\cite{BarlesSouganidis1991}.

By the Feynman--Kac formula the solution is a conditional expectation of a
diffusion process, and discretizing the expectation yields nonnegative
weights. Classical tree~\cite{Duffy2006}, Markov-chain~\cite{KushnerDupuis2001}, and
Monte~Carlo~\cite{FahimTouziWarin2011,GuoZhangZhuo2015} approaches face
limitations at boundaries or in low dimensions.  We recently built a
non-compact positivity-preserving framework based on conditional
expectation~\cite{RenXuYue2024,XuRenYue2026,XuLiYue2026}, which, however, does
not treat inhomogeneous Robin boundary conditions.  Robin data arise in
interfacial phenomena such as heat convection and electrochemistry; for HJB
equations, monotone, consistent, and stable schemes converge to the viscosity
solution by the Barles--Souganidis framework~\cite{BarlesSouganidis1991}.
Monte~Carlo methods for reflected diffusions~\cite{LeimkuhlerEtAl2023,NystromOnskog2010,ZhouCai2016}
are positivity-preserving but are not deterministic grid schemes.  Calzola,
Carlini, Dupuis, and Silva~\cite{CalzolaEtAl2023} proposed a convergent
semi-Lagrangian scheme for oblique derivative conditions, corresponding to
$k=0$ here, using an inward offset with a tunable parameter $\bar c$.

This paper extends our expectation framework to HJB equations with oblique
Robin boundary conditions, adding the zero-order term $ku$ to the boundary
operator of~\cite{CalzolaEtAl2023}.  The scheme is built from the reflected
Feynman--Kac representation: at each interior node, $P=2^m$ equally probable
Rademacher branches are generated; an exterior branch is mirrored about its
oblique projection, $\widetilde X=X^*-2d^{\gamma_b}(X^*)\gamma_b(p^{\gamma_b}(X^*))$,
and the round-trip distance $\widetilde d=2d^{\gamma_b}(X^*)$ is the discrete
boundary-local-time increment.  Unlike~\cite{CalzolaEtAl2023}, the reflection
is fully geometric and requires no additional inward-shift coefficient
$\bar c$; the Robin coefficient enters only
through the attenuation factors $e^{-\widetilde k\widetilde d}$ and
$\widetilde d\,e^{-\widetilde k\widetilde d/2}\widetilde g$, leaving the
probability $1/P$ unchanged.  At boundary nodes the Robin equation is
discretized separately by a same-level $\mathbb P_1$ closure over a spatial
offset $\ell_h\asymp h$, which may be implicit: fixed linear data give a
sparse nonsingular $M$-matrix system and the controlled boundary map is a
contraction.  For nonnegative data the scheme preserves nonnegativity without
a diagonal-dominance condition and without any CFL-type relation between
$\dt$ and $h$.

Under $\dt,h\to0$ with $h^2/\dt\to0$ we prove separate interior- and
boundary-node consistency, uniform $L^\infty$ stability, and convergence to
the viscosity solution.  For classical solutions with uniformly $C^{1,1}$
Robin residuals we obtain
$\max|U_i^n-u(t_n,x_i)|\le C(\dt^{1/2}+h^2/\dt)$.  This bound gives
$O(h^{2/3})$ under the balancing choice $\dt\asymp h^{4/3}$ and
$O(h^{1/2})$ under the commonly used choice $\dt\asymp h$.

The main contributions are summarized as follows.
\begin{enumerate}[label=(\roman*)]
	\item \textbf{A positive expectation discretization for anisotropic and
	degenerate diffusion.}  The diffusion
	operator is replaced by an expectation of directional derivatives: the
	discrete weights are products of the equal probability $1/P$, nonnegative
	$\mathbb P_1$ weights, and nonnegative attenuation factors, so anisotropic
	and degenerate diffusion is handled without imposing a compact-stencil
	diagonal-dominance condition.  This is the volume mechanism on which the new
	Robin construction below is built.
	\item \textbf{Systematic Robin treatment by mirror reflection and boundary
		local time.}  Unlike~\cite{CalzolaEtAl2023}, the scheme uses mirror reflection
	without the additional coefficient $\bar c$; the round-trip distance
	$\widetilde d$ serves as
	the discrete local-time increment and couples with $k$ through
	$e^{-\widetilde k\widetilde d}$ and
	$\widetilde d\,e^{-\widetilde k\widetilde d/2}\widetilde g$.  The Robin
	condition is discretised separately at boundary nodes by a same-level
	closure: a sparse $M$-matrix system for fixed linear data and a monotone
	contraction in the controlled case.
	\item \textbf{Monotone and $L^\infty$-stable with a full convergence theory.}
	The interior update is explicit, and positivity and monotonicity impose no
	time--space coupling.  Under $r,k\ge0$ and the stated refinement condition,
	the barrier argument gives a uniform $L^\infty$ bound.  Monotonicity,
	consistency, and stability yield, in the
	Barles--Souganidis sense~\cite{BarlesSouganidis1991}, convergence to the unique viscosity
	solution under $\dt,h\to0$, $h^2/\dt\to0$.
	\item \textbf{Quantitative error estimate and numerical verification.}  For
	classical solutions $u\in C_b^3(\mathcal N_T)$ whose control-resolved boundary
	residuals $q_b:=\mathcal L(t,x,Du,u,b)$ are uniformly $C^{1,1}$ on
	$\partial\cO$, we obtain the full-grid estimate $O(\dt^{1/2}+h^2/\dt)$, i.e.\
	$O(h^{2/3})$ for the balancing choice $\dt\asymp h^{4/3}$ and
	$O(h^{1/2})$ when $\dt\asymp h$.  Four two-dimensional experiments
	(a normal/oblique $k=0$ comparison, a nonzero-Robin test, a nonsmooth
	conical test, and a mixed Robin--Dirichlet test) show monotone error decay
	over the reported refinements.  The smooth Robin test includes an angular
	control refinement and a direct comparison of $\dt\asymp h$ with the
	balancing scale $\dt\asymp h^{4/3}$; the latter gives final observed orders
	$1.242$--$1.261$ over the displayed refinements, while the angular
	differences decay consistently with $O(K^{-2})$.
	For the $k=0$ comparison, the reference offset is selected by a stated
	coarse-grid calibration and then frozen on two finer held-out meshes; the
	mixed-boundary test is reported only as an exploratory computation outside
	the smooth-domain theorem.
	
\end{enumerate}

The paper is organized as follows: preliminaries (Sec.~\ref{sec:prelim}),
the scheme (Sec.~\ref{sec:scheme}), properties (Sec.~\ref{sec:properties}),
convergence (Sec.~\ref{sec:convergence}), numerics (Sec.~\ref{sec:numerics}),
and conclusion (Sec.~\ref{sec7}).

	\section{Preliminaries}
	\label{sec:prelim}
	
	\subsection{Problem setting}
	
	The equation is written in backward form.  Let
	\[
	\cO_T:=[0,T)\times\cO,\qquad \overline{\cO}_T:=[0,T]\times\overline{\cO}.
	\]
	Consider the following Hamilton--Jacobi--Bellman equation with oblique Robin boundary conditions:
	\begin{equation}
		\begin{cases}
			-\partial_t u + H(t,x,Du,D^2u,u) = 0, & (t,x)\in\cO_T, \\[8pt]
			L(t,x,Du,u) = 0, & (t,x)\in[0,T)\times\partial\cO, \\[8pt]
			u(T,x) = \Psi(x), & x\in\overline{\cO},
		\end{cases}
		\label{eq:HJB-main}
	\end{equation}
	where
	\begin{equation}
		\label{eq:HJB-H}
		\begin{aligned}
			H(t,x,p,M,u) = \sup_{a\in A}\Big\{
			&-\dfrac{1}{2}\Tr\big(\sigma(t,x,a)\sigma(t,x,a)^{\top}M\big) \\
			&-\langle\mu(t,x,a),p\rangle
			+ r(t,x,a)\,u - f(t,x,a) \Big\},
		\end{aligned}
	\end{equation}
	\begin{equation}
		\label{eq:HJB-L}
		L(t,x,p,u) = \sup_{b\in B}\Big\{
		\langle\gamma(x,b),p\rangle + k(t,x,b)\,u - g(t,x,b) \Big\}.
 s	\end{equation}
	The data of the problem are as follows:
	\begin{itemize}
		\item $A\subset\R^{N_A}$, $B\subset\R^{N_B}$ are nonempty compact sets;
		\item $\sigma:[0,T]\times\overline{\cO}\times A\to\R^{N\times N_p}$ with $1\le N_p\le N$;
		\item $\mu:[0,T]\times\overline{\cO}\times A\to\R^N$;
		\item $f,r:[0,T]\times\overline{\cO}\times A\to\R$;
		\item $\gamma:\partial\cO\times\mathcal{V}\to\R^N$, where $\mathcal{V}\subseteq\R^{N_B}$ is an open set containing $B$;
		\item $g,k:[0,T]\times\partial\cO\times B\to\R$;
		\item $\Psi:\overline{\cO}\to\R$.
	\end{itemize}
	We denote $\gamma_b(x)=\gamma(x,b)$ for all $(x,b)\in\partial\cO\times B$.
	
	Equations~\eqref{eq:HJB-main}--\eqref{eq:HJB-L} extend the HJB equation with oblique derivative boundary conditions studied by Calzola--Carlini--Dupuis--Silva~\cite{CalzolaEtAl2023}.  The term $r(t,x,a)u$ is the ordinary-time discount, whereas $k(t,x,b)u$ is the oblique Robin term coupled to boundary local time and requires the separate boundary treatment developed below.  When $r\equiv0$ and $k\equiv0$, \eqref{eq:HJB-main}--\eqref{eq:HJB-L} reduce, up to the time reversal $t\mapsto T-t$, to the equation in~\cite{CalzolaEtAl2023}.

	\subsection{Basic assumptions}
	
	We make the following assumptions throughout the paper.

	\begin{enumerate}[
		label=\arabic*.,
		ref=\arabic*,
		labelsep=1em,
		leftmargin=*,
		itemindent=0pt
		]
		\item \label{ass:H1}
		$\mathcal{O}\subset\mathbb{R}^N$ is a nonempty bounded domain, and $\partial\mathcal{O}$ is of class $C^3$.
		
		\item \label{ass:H2}
		The functions $\sigma$, $\mu$, $f$, $g$, $r$, $k$ and $\Psi$ are continuous. Moreover, for each $a\in A$, the maps $\sigma(\cdot,\cdot,a)$ and $\mu(\cdot,\cdot,a)$ are Lipschitz continuous, with Lipschitz constants independent of $a\in A$.
		
		\item \label{ass:H3}
		The function $\gamma$ is of class $C^1$, and for every $(x,b)\in\partial\mathcal{O}\times B$,
		\begin{equation}
			|\gamma_b(x)|=1,\qquad \langle n(x),\gamma_b(x)\rangle > 0,
			\label{eq:gamma-cond}
		\end{equation}
		where $n(x)$ denotes the unit outward normal vector of $\overline{\mathcal{O}}$ at $x\in\partial\mathcal{O}$.
		
		\item \label{ass:H4}
		For all $(t,x,a)\in[0,T]\times\overline{\mathcal{O}}\times A$, $r(t,x,a)\ge 0$; and for all $(t,x,b)\in[0,T]\times\partial\mathcal{O}\times B$, $k(t,x,b)\ge 0$.
	\end{enumerate}
	The conditions in Assumptions~\ref{ass:H1}--\ref{ass:H3} on $\cO$, $\sigma$, $\mu$, $f$, $g$, $\Psi$, and $\gamma$ are those used in~\cite{CalzolaEtAl2023}; continuity of the added coefficients $r$ and $k$ is included in Assumption~\ref{ass:H2}.  Assumption~\ref{ass:H4} ensures that the attenuation factors used below do not exceed one; this property is used in the monotonicity and barrier arguments.

	\subsection{Viscosity solutions and comparison}
	
	The definition of viscosity solutions parallels that of~\cite{CalzolaEtAl2023}, with only the addition of the $ru$ term in the Hamiltonian and the $ku$ term in the boundary operator.
	
	For any bounded function $z:\overline{\cO}_T\to\R$, its upper (resp.\ lower) semicontinuous envelope $z^*$ (resp.\ $z_*$) is defined exactly as in~\cite{CalzolaEtAl2023} and is not repeated here.
	
	\begin{definition}[Viscosity subsolution / supersolution / solution] \upshape
		\label{def:viscosity}
		\hfill\strut
		\begin{enumerate}
			\item[\upshape(i)] An upper semicontinuous function $u_1:\overline{\cO}_T\to\R$ is called a \textbf{viscosity subsolution} of \eqref{eq:HJB-main} if, for every $(t,x)\in\overline{\cO}_T$ and every $\phi\in C^2(\overline{\cO}_T)$ such that $u_1-\phi$ attains a local maximum at $(t,x)$, the following holds:
			\begin{itemize}
				\item If $(t,x)\in\cO_T$,
				\begin{equation}
					-\partial_t\phi(t,x) + H\big(t,x,D\phi(t,x),D^2\phi(t,x),u_1(t,x)\big) \le 0;
					\label{eq:sub-interior}
				\end{equation}
				\item If $(t,x)\in[0,T)\times\partial\cO$,
				\begin{equation}
					\min\Big\{
					-\partial_t\phi(t,x) + H\big(t,x,D\phi(t,x),D^2\phi(t,x),u_1(t,x)\big),\;
					L\big(t,x,D\phi(t,x),u_1(t,x)\big)
					\Big\} \le 0;
					\label{eq:sub-boundary}
				\end{equation}
				\item If $(t,x)\in\{T\}\times\cO$,
				\begin{equation}
					\min\Big\{
					-\partial_t\phi(t,x) + H\big(t,x,D\phi(t,x),D^2\phi(t,x),u_1(t,x)\big),\;
					u_1(t,x)-\Psi(x)
					\Big\} \le 0;
					\label{eq:sub-terminal-interior}
				\end{equation}
				\item If $(t,x)\in\{T\}\times\partial\cO$,
				\begin{equation}
					\begin{split}
						\min\Big\{
						& -\partial_t\phi(t,x) + H\big(t,x,D\phi(t,x),D^2\phi(t,x),u_1(t,x)\big), \\
						& L\big(t,x,D\phi(t,x),u_1(t,x)\big),  u_1(t,x)-\Psi(x)
						\Big\} \le 0.
					\end{split}
					\label{eq:sub-terminal-boundary}
				\end{equation}
			\end{itemize}
			\item[\upshape(ii)] A lower semicontinuous function $u_2$ is called a \textbf{viscosity supersolution} if the inequalities in (i) are reversed: $\le$ is replaced by $\ge$, and $\min$ is replaced by $\max$, with $u_1$ replaced by $u_2$ everywhere.

			\item[\upshape(iii)] A bounded function $u$ is called a \textbf{viscosity solution} if its upper semicontinuous envelope $u^*$ is a viscosity subsolution and its lower semicontinuous envelope $u_*$ is a viscosity supersolution.
		\end{enumerate}
	\end{definition}
	\begin{remark} \upshape The $\min$ (resp.\ $\max$) form on the terminal layer $\{T\}\times\overline{\cO}$ can be replaced by the pointwise conditions $u_1(T,x)\le\Psi(x)$ (resp.\ $u_2(T,x)\ge\Psi(x)$); see~\cite[Proposition~6]{Bourgoing2008}.
	\end{remark}
	\begin{proposition}[Comparison principle] \upshape\label{thm:wellposed}
		Under Assumptions~\ref{ass:H1}--\ref{ass:H4}, let $v$ be a bounded
		upper semicontinuous viscosity subsolution and let $w$ be a bounded lower
		semicontinuous viscosity supersolution of~\eqref{eq:HJB-main}.  Then
		\[
		v\le w\qquad\text{on }\overline{\cO}_T.
		\]
	\end{proposition}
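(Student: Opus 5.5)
Our plan is to reduce the statement to the comparison theorem for parabolic HJB equations with oblique derivative boundary conditions in the generalized (viscosity) sense. We would follow the doubling-of-variables argument of Barles and Dupuis--Ishii, as used in~\cite{CalzolaEtAl2023} via~\cite{Bourgoing2008}, and track only the two new zero-order terms $ru$ and $ku$.

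\textbf{Step 1: strict subsolution and sign reduction.} We first use the Remark after Definition~\ref{def:viscosity} to replace the terminal conditions by $v(T,\cdot)\le\Psi\le w(T,\cdot)$. Next we perturb $v$ into a strict subsolution. Let $\psi\in C^2(\overline{\cO})$ satisfy $\langle D\psi,\gamma_b\rangle\ge1$ on $\partial\cO$ for all $b\in B$; such a function exists by uniform obliqueness~\eqref{eq:gamma-cond} and the $C^3$ boundary (take a cutoff of the signed distance). For small $\varepsilon>0$ and suitable $\lambda,C_0$ we set
\[
v_\varepsilon:=v-\varepsilon\big(e^{\lambda(T-t)}\psi+C_0(T-t)+C_0\big).
\]
Because $r,k\ge0$ (Assumption~\ref{ass:H4}), subtracting a positive amount only helps in the zero-order terms: $r\,(v-c)\le rv$ and $k\,(v-c)\le kv$. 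The function $v_\varepsilon$ then satisfies both the interior inequality and the boundary inequality with a margin of order $\varepsilon$. This is the only point where the added terms require care, and monotonicity in $u$ makes them harmless. It suffices to prove $v_\varepsilon\le w$ and then let $\varepsilon\to0$.

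\textbf{Step 2: doubling of variables with oblique test function.} Suppose $\max(v_\varepsilon-w)=\delta>0$. The maximum is attained at some $(\hat t,\hat x)$, and we may assume $\hat t<T$. We consider
\[
\Phi=v_\varepsilon(t,x)-w(s,y)-\frac{|t-s|^2}{2\beta}-\varphi_\alpha(x,y),
\]
where $\varphi_\alpha$ is the Dupuis--Ishii test function adapted to the oblique field $\gamma$. It has the form $\alpha|x-y|^2$ corrected by $\langle\gamma(\cdot),x-y\rangle$-type terms, using a $C^1$ extension of $\gamma$ with the control $b$ handled through compactness of $B$ and the $C^1$ regularity of $\gamma$. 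The function is built so that, whenever the maximiser lies on $\partial\cO$, the boundary operator evaluated at $D_x\varphi_\alpha$ (respectively $-D_y\varphi_\alpha$) has the favourable sign up to $o(1)$. Combined with the strict margin from Step 1, this makes the $\min$ in~\eqref{eq:sub-boundary} pick the interior $H$-inequality for $v_\varepsilon$, and the $\max$ pick the interior $H$-inequality for $w$. The main obstacle is exactly this construction with a control-dependent $\gamma_b$ under a $\sup_b$. We would rely on the existence of such test functions for compact families of uniformly oblique $C^1$ fields, as established in the framework of~\cite{CalzolaEtAl2023,Bourgoing2008}. The $kv$ terms are bounded there and are absorbed by the $\varepsilon$ margin once $C_0$ is large.

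\textbf{Step 3: conclusion via Ishii's lemma.} We apply the parabolic Ishii lemma at the maximiser to obtain jets $(a,p,X)$ for $v_\varepsilon$ and $(a',q,Y)$ for $w$, with $a=a'$ from the time penalisation. Subtracting the two interior inequalities, the Lipschitz bounds on $\sigma$ and $\mu$ (Assumption~\ref{ass:H2}) control the second-order and drift differences by $C\alpha|x-y|^2+o(1)\to0$. Continuity of $f$ and $r$ handles the remaining differences. The term $r(v_\varepsilon-w)\ge r\delta\ge0$ has the correct sign, so we obtain $c\varepsilon\le o(1)$ as $\alpha,\beta\to\infty$, which is a contradiction. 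Hence $v_\varepsilon\le w$; letting $\varepsilon\to0$ gives $v\le w$ on $\overline{\cO}_T$.
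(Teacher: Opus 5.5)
Your architecture (strict perturbation, doubling of variables, parabolic Ishii lemma) is the standard proof of the theorem the paper simply invokes. The paper checks the obliqueness inequality $L(t,x,p+\lambda n(x),z)\ge L(t,x,p,z)+\nu\lambda$, properness of $H$ and $L$ with Lipschitz bounds $\|r\|_\infty,\|k\|_\infty$ in $z$, Lipschitz continuity of $L$ in $p$, and the continuity moduli. It then applies \cite[Theorem~II.1]{Barles1993} after time reversal. Re-deriving that theorem is legitimate, but your Step~2 fails as written.

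At a contact point $\bar x\in\partial\cO$ you must show $L(\bar t,\bar x,D_x\varphi_\alpha,v_\varepsilon(\bar t,\bar x))>-c\varepsilon$ to push the $\min$ in \eqref{eq:sub-boundary} onto the $H$-inequality. A test function that only gives $\gamma_b\cdot D_x\varphi_\alpha\ge-o(1)$ leaves the term $\sup_b\{k v_\varepsilon-g\}$, which is of order one and of either sign. This term is not ``absorbed by the $\varepsilon$ margin once $C_0$ is large''. Enlarging $C_0$ moves $kv_\varepsilon$ only by $O(\varepsilon)$ and does nothing to $g$; when $k\equiv0$ it does nothing at all. So your claim that Step~1 is the only place where the new terms need care is wrong. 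The zero-order and source parts of the boundary operator must be built into $\varphi_\alpha$, frozen at the limiting value of $w$, and combined with monotonicity of $L$ in $u$.

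The control-dependent direction causes a second failure. On the supersolution side you need $-\gamma_b(\bar y)\cdot D_y\varphi_\alpha+kw-g<0$ for every $b$ at once, because $L$ is a supremum over $b$. A Dupuis--Ishii correction built around a single field, plus compactness of $B$, does not produce one $\varphi_\alpha$ with this property. What works is Barles' construction for general nonlinear uniformly oblique operators. Uniform obliqueness and Lipschitz continuity in $p$ let you solve $L(x,u,p)=0$ for $p\cdot n(x)$ as a Lipschitz function of $x$, $u$ and the tangential part $p-(p\cdot n)n$. The test function is built from this normal form, and that handles the $\sup_b$ and the $ku-g$ terms together.

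As it stands, the decisive step of your argument is deferred to the literature, and the mechanism you describe for it (single-field correction, $b$ via compactness, $kv$ via the margin) would not work. The clean repair is the paper's: verify the structural hypotheses of Barles' theorem and cite it, using \cite[Proposition~6]{Bourgoing2008} for the terminal layer.
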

	\begin{proof}
		Compactness and Assumption~\ref{ass:H3} give
		\[
		\nu:=\min_{(x,b)\in\partial\cO\times B}
		n(x)\cdot\gamma(x,b)>0.
		\]
		Consequently, for every $\lambda\ge0$,
		\[
		L(t,x,p+\lambda n(x),z)
		\ge L(t,x,p,z)+\nu\lambda .
		\]
		Moreover, if $z_1\ge z_2$, then
		\[
		0\le H(t,x,p,M,z_1)-H(t,x,p,M,z_2)
		\le \|r\|_\infty(z_1-z_2),
		\]
		and
		\[
		0\le L(t,x,p,z_1)-L(t,x,p,z_2)
		\le \|k\|_\infty(z_1-z_2).
		\]
		Thus $H$ is continuous, proper, and degenerate elliptic, whereas $L$ is
		continuous, proper, uniformly oblique, and Lipschitz continuous in $p$.
		For example, if $|z|,|w|\le R$ and $d=|t-s|+|x-y|$, uniform moduli
		$\omega_k$ and $\omega_g$ give
		\begin{align*}
		&|L(t,x,p,z)-L(s,y,q,w)|\\
		&\qquad\le |p-q|+C_\gamma|q|\,|x-y|
		+\|k\|_\infty|z-w|+R\omega_k(d)+\omega_g(d).
		\end{align*}
		The uniform Lipschitz continuity of $\sigma$ and $\mu$, together with the
		uniform continuity of the remaining data on their compact domains, gives
		the analogous standard controlled-diffusion modulus for $H$.  Therefore,
		after reversing the
		time variable, the comparison theorem in
		\cite[Theorem~II.1]{Barles1993} applies.  The relaxed terminal formulation
		is equivalent to the pointwise terminal inequalities by
		\cite[Proposition~6]{Bourgoing2008}.
	\end{proof}
	\begin{remark} \upshape
		Existence is not assumed at this stage.  The half-relaxed-limit argument in
		Theorem~\ref{thm:convergence} constructs a viscosity subsolution and a
		viscosity supersolution.  The preceding comparison principle makes them
		coincide and thereby produces the unique continuous viscosity solution.
	\end{remark}
	
	\subsection{Oblique projection near the boundary}
	
	The results in this subsection are taken from~\cite{CalzolaEtAl2023} and are restated here for completeness; they are essential for the scheme construction in Section~3.
	
	A key geometric tool is the existence of a unique oblique projection onto $\partial\cO$ along the direction $\gamma_b$ for points sufficiently close to the boundary.  The following proposition is Proposition~1 of~\cite{CalzolaEtAl2023}, which extends the construction in~\cite[Section~1.2]{Gobet2001} to a control-dependent direction.
	
	\begin{proposition}[Oblique projection] \upshape\label{prop:oblique-proj}
		Under Assumptions~\ref{ass:H1} and~\ref{ass:H3}, there exists $R>0$ such that, for every $x\in\R^N$ satisfying $d(x,\partial\cO)<R$ and every $b\in B$, there exist unique $p^{\gamma_b}(x)\in\partial\cO$ and $d^{\gamma_b}(x)\in\R$ satisfying
		\begin{equation}
			x = p^{\gamma_b}(x) + d^{\gamma_b}(x)\,\gamma_b\!\big(p^{\gamma_b}(x)\big).
			\label{eq:oblique-decomp}
		\end{equation}
		The mappings
		\[
		(x,b)\longmapsto p^{\gamma_b}(x),\qquad (x,b)\longmapsto d^{\gamma_b}(x),
		\]
		called respectively the \textbf{oblique projection} onto $\partial\cO$ and the \textbf{signed algebraic distance} along the direction $\gamma_b$, are both of class $C^1$.
	\end{proposition}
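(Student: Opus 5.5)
The plan is to prove the proposition with the implicit function theorem applied to the map
\[
F:\partial\cO\times\R\times\mathcal V\to\R^N,\qquad F(y,d,b):=y+d\,\gamma(y,b),
\]
and then to upgrade local invertibility to a uniform tubular neighbourhood using compactness of $\partial\cO\times B$. Since $\partial\cO$ is $C^3$ and $\gamma$ is $C^1$, $F$ is $C^1$ on the $N$-dimensional manifold $\partial\cO\times\R$, with $b$ treated as a parameter.

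\textbf{Step 1 (invertibility of the differential at $d=0$).} At a point $(y_0,0,b)$ the differential in $(y,d)$ sends $(\tau,s)\in T_{y_0}\partial\cO\times\R$ to $\tau+s\gamma_b(y_0)$. By~\eqref{eq:gamma-cond}, $\langle n(y_0),\gamma_b(y_0)\rangle>0$, so $\gamma_b(y_0)$ is transversal to $T_{y_0}\partial\cO=n(y_0)^\perp$. Hence this linear map is an isomorphism of $\R^{N-1}\times\R$ onto $\R^N$. Its inverse has norm bounded by a constant depending only on $\nu=\min n\cdot\gamma>0$, the same constant introduced in the proof of Proposition~\ref{thm:wellposed}.

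\textbf{Step 2 (local inverse, uniform in $b$).} The implicit function theorem, applied to $(x,y,d,b)\mapsto F(y,d,b)-x$, gives the following. Each $(y_0,b_0)$ has a neighbourhood in which $x$ near $y_0$ admits a unique $(y,d)$ near $(y_0,0)$ with $F(y,d,b)=x$, and the solution depends $C^1$ on $(x,b)$. Compactness of $\partial\cO\times B$ together with a quantitative inverse function theorem then yield uniform radii $\rho,\delta>0$. These rely on the uniform modulus of continuity of $D\gamma$ and of the $C^2$ curvature of $\partial\cO$, plus the uniform bound from Step 1. The conclusion is that for every $y_0\in\partial\cO$ and $b\in B$, every $x$ with $|x-y_0|<\rho$ has exactly one preimage with $|y-y_0|<\delta$ and $|d|<\delta$.

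\textbf{Step 3 (global uniqueness; the main obstacle).} The remaining task is to exclude a second, far-away decomposition $x=y'+d'\gamma_b(y')$. I expect this step to be the hardest, and I will handle it by contradiction. Suppose that for $R_j\to0$ there exist $x_j$, $b_j$ and two distinct decompositions $(y_j,d_j)\neq(y'_j,d'_j)$ with $d(x_j,\partial\cO)<R_j$. I restrict attention to decompositions with $|d|<\delta$, which is the meaningful range; this restriction is what makes the proposition's notion of uniqueness well defined. For $|d|<\delta$, transversality gives $|x-y|\le C|d|$ and $|d|\le C\,d(x,\partial\cO)/\nu$. Therefore both $y_j$ and $y'_j$ converge to the same limit, after extracting a subsequence by compactness. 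For large $j$ both decompositions then lie in the single chart of Step 2, which contradicts local uniqueness.

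This argument produces $R>0$ such that $d(x,\partial\cO)<R$ implies existence of a decomposition, taken from the chart at a nearest boundary point, and its uniqueness. The $C^1$ regularity of $(x,b)\mapsto(p^{\gamma_b}(x),d^{\gamma_b}(x))$ is inherited from the implicit function theorem in Step 2. Regularity in $b$ uses that $\gamma$ is $C^1$ on the open set $\mathcal V\supset B$, which is why the statement requires $\gamma$ to be defined on $\mathcal V$.
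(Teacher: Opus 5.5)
Your argument is correct. The paper gives no proof of its own (it defers to Calzola et al., Proposition~1, which extends Gobet's construction), and your implicit-function-theorem plus compactness argument is the standard self-contained route to that cited result.

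Your restriction of uniqueness to $|d|<\delta$ is necessary, not just convenient. For the unit disk with $\gamma=n$, the point $(1-\varepsilon)e_1$ equals both $e_1-\varepsilon\,n(e_1)$ and $-e_1+(\varepsilon-2)\,n(-e_1)$ for every small $\varepsilon>0$. So the statement's ``unique $d^{\gamma_b}(x)\in\R$'' holds only in the localized sense you adopt.
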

	\begin{proof} \upshape See~\cite{CalzolaEtAl2023}, Proposition~1.\end{proof}
	
	\begin{lemma}[Uniform oblique-distance comparison]
		\upshape\label{lem:oblique-distance}
		After possibly decreasing the radius in
		Proposition~\ref{prop:oblique-proj}, there is a constant $C_\gamma>0$,
		independent of $b\in B$, such that
		\[
		|d^{\gamma_b}(x)|\le C_\gamma\dist(x,\partial\cO)
		\]
		whenever $x$ is in that tubular neighbourhood.  If in addition
		$x\notin\overline\cO$, then $d^{\gamma_b}(x)>0$ and
		\[
		p^{\gamma_b}(x)-d^{\gamma_b}(x)
		\gamma_b\bigl(p^{\gamma_b}(x)\bigr)\in\cO.
		\]
	\end{lemma}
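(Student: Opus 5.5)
Fix $x$ in the tubular neighbourhood and write $y=p^{\gamma_b}(x)\in\partial\cO$ and $d=d^{\gamma_b}(x)$, so that $x=y+d\,\gamma_b(y)$ by \eqref{eq:oblique-decomp}. The approach is to compare the oblique decomposition with the normal one, using the uniform obliqueness constant
\[
\nu:=\min_{(x,b)\in\partial\cO\times B}\langle n(x),\gamma_b(x)\rangle>0
\]
already introduced in the proof of Proposition~\ref{thm:wellposed}, together with the $C^2$ (indeed $C^3$) regularity of $\partial\cO$ from Assumption~\ref{ass:H1}. Let $\rho(z)$ denote the signed distance to $\partial\cO$, positive outside $\cO$. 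It is $C^2$ in a tube of radius $R_0$, with $D\rho=n$ on $\partial\cO$ and $\|D^2\rho\|_\infty\le C_\partial$ on that tube.

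The first step is a Taylor expansion of $\rho$ along the segment $s\mapsto y+s\gamma_b(y)$, $s\in[0,d]$. Since $\rho(y)=0$, $D\rho(y)=n(y)$ and $|\gamma_b|=1$, this gives
\[
\bigl|\rho(x)-d\,\langle n(y),\gamma_b(y)\rangle\bigr|\le \tfrac12 C_\partial d^2 .
\]
This requires the segment to stay inside the $C^2$ tube, so I first need a uniform a priori bound $|d^{\gamma_b}(x)|\le \delta_0$ for some small $\delta_0$ on a possibly smaller neighbourhood. That bound should follow from continuity of $(x,b)\mapsto d^{\gamma_b}(x)$ (Proposition~\ref{prop:oblique-proj}), the fact that $d^{\gamma_b}=0$ on $\partial\cO$ by uniqueness, and compactness of $\partial\cO\times B$. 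Uniform continuity near the compact set $\partial\cO\times B$ gives a radius $R'$ such that $\dist(x,\partial\cO)<R'$ implies $|d^{\gamma_b}(x)|<\delta_0$ for all $b$. I expect this uniformity step to be the main, if mild, obstacle, because it is exactly where the phrase ``after possibly decreasing the radius'' is spent.

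With $\delta_0\le \nu/C_\partial$, the expansion yields
\[
|\rho(x)|\ge \nu|d|-\tfrac12 C_\partial d^2\ge \tfrac{\nu}{2}|d| .
\]
Since $|\rho(x)|=\dist(x,\partial\cO)$, this gives the first claim with $C_\gamma=2/\nu$, which is independent of $b$.

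For the second claim, take $x\notin\overline\cO$, so $\rho(x)>0$. The same expansion gives $\rho(x)\le \nu' d+\tfrac12C_\partial d^2$ with $\nu'=\langle n(y),\gamma_b(y)\rangle\ge\nu$. If $d\le0$, then under $|d|\le\delta_0$ the right-hand side is at most $d(\nu-\tfrac12C_\partial|d|)\le0$, a contradiction; also $d\neq0$ because $x\notin\partial\cO$. Hence $d>0$. For the mirrored point $\tilde x=y-d\,\gamma_b(y)$, I expand $\rho$ along $s\mapsto y-s\gamma_b(y)$, $s\in[0,d]$, which again lies in the tube. This gives
\[
\rho(\tilde x)\le -\nu d+\tfrac12C_\partial d^2\le -\tfrac{\nu}{2}d<0 ,
\]
so $\tilde x\in\cO$.
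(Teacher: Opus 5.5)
Your proof is correct. Your treatment of the sign of $d^{\gamma_b}(x)$ and of the mirrored point is the same second-order expansion of the signed distance $\rho$ along $\pm\gamma_b$ that the paper uses. You reach the first inequality by a different route.

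The paper sets $M:=\sup_{x,b}|D_xd^{\gamma_b}(x)|$ on a closed sub-tube; this is finite by the joint $C^1$ regularity in Proposition~\ref{prop:oblique-proj} and compactness. It then applies the mean value theorem to $d^{\gamma_b}$ along the normal segment from the nearest boundary point $\pi(x)$, where $d^{\gamma_b}$ vanishes, to $x$. This gives $C_\gamma=M$. The resulting bound $|d^{\gamma_b}(x)|\le M\dist(x,\partial\cO)$ is also what later makes the quadratic Taylor remainder small.

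You reverse the order. You obtain the smallness of $|d|$ from continuity of $(x,b)\mapsto d^{\gamma_b}(x)$, its vanishing on $\partial\cO$, and compactness of $\partial\cO\times B$; your subsequence argument is sound. You then read the first inequality off the same expansion, as the lower bound $\dist(x,\partial\cO)=|\rho(x)|\ge\tfrac{\nu}{2}|d|$.

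What each route buys: yours needs no derivative bound on $d^{\gamma_b}$ and gives the explicit constant $C_\gamma=2/\nu$, fixed by the obliqueness margin alone (the curvature bound $C_\partial$ only affects the radius). It also derives all assertions from one expansion. The paper's route is quicker for the first claim and gets the a priori control of $d$ directly from it, at the cost of a non-explicit constant.
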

	\begin{proof}
		Let $\rho$ be the signed Euclidean distance to $\partial\cO$, positive
		outside $\cO$ and negative inside.  Since $\partial\cO$ is of class $C^3$,
		$\rho$ is of class $C^2$ in a fixed two-sided tubular neighbourhood and
		$D\rho(p)=n(p)$ on $\partial\cO$.  Compactness and uniform obliqueness give
		\[
		\nu_0:=\min_{p\in\partial\cO,\,b\in B}
		n(p)\cdot\gamma_b(p)>0.
		\]
		On a smaller closed tubular neighbourhood, the joint $C^1$ regularity in
		Proposition~\ref{prop:oblique-proj} gives
		\(
		M:=\sup_{x,b}|D_xd^{\gamma_b}(x)|<\infty.
		\)
		If $\pi(x)$ is the normal projection of $x$ onto $\partial\cO$, then
		$d^{\gamma_b}(\pi(x))=0$.  The segment from $\pi(x)$ to $x$ remains in the
		tubular neighbourhood, and the mean-value theorem therefore yields
		\[
		|d^{\gamma_b}(x)|
		\le M|x-\pi(x)|=M\dist(x,\partial\cO),
		\]
		uniformly in $b$.
		
		It remains to verify the sign and the reflected location.  Taylor's formula,
		uniformly in $(p,b)\in\partial\cO\times B$, gives
		\[
		\rho\bigl(p+s\gamma_b(p)\bigr)
		=s\,n(p)\cdot\gamma_b(p)+O(s^2).
		\]
		Shrink the neighbourhood once more so that the quadratic remainder is at
		most $\nu_0|s|/2$.  For an exterior point write
		$x=p+d\gamma_b(p)$ using Proposition~\ref{prop:oblique-proj}.  Since
		$\rho(x)>0$, the preceding expansion implies $d>0$.  Applying it with
		$s=-d$ gives
		\[
		\rho\bigl(p-d\gamma_b(p)\bigr)
		\le-\nu_0d/2<0,
		\]
		so the reflected point belongs to $\cO$.
	\end{proof}
	
	For any $\varepsilon\ge0$, we introduce the following notation :
	\begin{equation}
		\begin{aligned}
			D_\varepsilon    &:= \{x\in\overline{\cO}\mid d(x,\partial\cO)>\varepsilon\},\\[2pt]
			\partial D_\varepsilon &:= \{x\in\overline{\cO}\mid d(x,\partial\cO)=\varepsilon\},\\[2pt]
			L_\varepsilon    &:= \{x\in\overline{\cO}\mid d(x,\partial\cO)\le\varepsilon\}.
		\end{aligned}
		\label{eq:domain-decomp}
	\end{equation}
	
	\begin{lemma} [Regularity of the distance function]. \upshape\label{lem:dist-reg}
		Under Assumption~\ref{ass:H1}, the following hold:
		\begin{enumerate}
			\item[\upshape(i)] There exists $\eta>0$ such that on $L_\eta$, the normal projection $p_{\partial\cO}$ is well defined and of class $C^1$.
			\item[\upshape(ii)] The distance function $L_\eta\ni x\mapsto d(x,\partial\cO)$ is of class $C^3$, and
			\[
			D d(\cdot,\partial\cO)(x) = -\,n\!\big(p_{\partial\cO}(x)\big).
			\]
			\item[\upshape(iii)] For every $\delta\in[0,\eta]$, $\partial D_\delta$ is of class $C^3$, and its unit outward normal is $n_\delta(x)=n(p_{\partial\cO}(x))$.
			\item[\upshape(iv)] For any $x\in L_\delta$ ($\delta\in[0,\eta]$), the point $p = p_{\partial\cO}(x) - \delta\,n(p_{\partial\cO}(x))$ is the projection of $x$ onto $\partial D_\delta$.
			\item[\upshape(v)] For every $\delta\in[0,\eta]$, the function $x\mapsto d(x,\partial D_\delta)$ is of class $C^3$ on $L_\delta$, and
			\begin{equation}
				d(x,\partial\cO) + d(x,\partial D_\delta) = \delta,\qquad \forall\,x\in L_\delta.
				\label{eq:dist-add}
			\end{equation}
		\end{enumerate}
	\end{lemma}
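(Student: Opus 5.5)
The plan is to work entirely with the signed distance $\rho$ already used in the proof of Lemma~\ref{lem:oblique-distance}, which is positive outside $\cO$ and negative inside. Inside $\overline\cO$ we have $d(x,\partial\cO)=-\rho(x)$. The key tool is the classical tubular-neighbourhood theorem for a compact $C^3$ hypersurface (Gilbarg--Trudinger, Lemma~14.16). Since $\partial\cO$ is compact and $C^3$, it satisfies a uniform interior and exterior ball condition with some radius $\eta_0>0$. We take $\eta<\eta_0$.

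\textbf{Parts (i) and (ii).} Consider the map $\Phi(p,s)=p-s\,n(p)$ on $\partial\cO\times[0,\eta]$. Because $n$ is $C^2$, $\Phi$ is $C^2$. At $s=0$ its differential is an isomorphism, and the uniform ball condition makes $\Phi$ injective for $s<\eta_0$. By the inverse function theorem, $\Phi$ is therefore a $C^2$ diffeomorphism onto $L_\eta$. Its inverse has the form $x\mapsto(p_{\partial\cO}(x),d(x,\partial\cO))$. This already gives $p_{\partial\cO}\in C^1$ (in fact $C^2$), which is (i).

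For (ii), the gradient identity $Dd=-n(p_{\partial\cO}(x))$ comes from differentiating $|x-p_{\partial\cO}(x)|$, using that $x-p_{\partial\cO}(x)$ is normal to $\partial\cO$. The right-hand side is $C^2$, since $n\in C^2$ and $p_{\partial\cO}\in C^2$. Hence $d$ is $C^3$.

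\textbf{Part (iii).} For $\delta<\eta$, the set $\partial D_\delta$ is a level set of the $C^3$ function $d$, whose gradient has unit norm. It is therefore a $C^3$ hypersurface. Its outward normal relative to $D_\delta$ points in the direction of decreasing $d$, namely $-Dd=n(p_{\partial\cO}(x))$. The endpoint case $\delta=\eta$ is handled by having chosen $\eta$ strictly smaller than the reach $\eta_0$.

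\textbf{Parts (iv) and (v).} Let $x\in L_\delta$, with $p=p_{\partial\cO}(x)$ and $s=d(x,\partial\cO)\le\delta$. The point $q=p-\delta n(p)$ lies on $\partial D_\delta$, and $|x-q|=\delta-s$. Any other point $y\in\partial D_\delta$ satisfies
\[
|x-y|\ge d(y,\partial\cO)-d(x,\partial\cO)=\delta-s,
\]
because $d$ is $1$-Lipschitz. So $q$ is a nearest point. Uniqueness follows since $x-q$ is normal to $\partial D_\delta$ and $\Phi$ is injective. This gives (iv) and also $d(x,\partial D_\delta)=\delta-d(x,\partial\cO)$, which is \eqref{eq:dist-add}. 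Regularity of $d(\cdot,\partial D_\delta)$ then follows from (ii).

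\textbf{Main obstacle.} The delicate step is the injectivity of $\Phi$ and the resulting regularity count: showing that the inverse is $C^2$ and not only $C^1$, which is what gives $d\in C^3$. Here I would simply cite the Gilbarg--Trudinger lemma, which states $d\in C^k$ for a $C^k$ boundary, rather than reprove it.
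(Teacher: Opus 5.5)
Your proposal is correct and follows essentially the same route as the paper, which simply cites Gilbarg--Trudinger, Lemma~14.16, for (i)--(ii) and Calzola et al., Lemma~1, for (iii)--(v); your level-set and $1$-Lipschitz arguments supply the standard content behind that citation. The one step to tighten is uniqueness in (iv): argue it for an arbitrary minimizer $y\in\partial D_\delta$ rather than via normality of $x-q$, since the chain $\delta=d(y,\partial\cO)\le|y-p|\le|y-x|+|x-p|=(\delta-s)+s=\delta$ forces $p=p_{\partial\cO}(y)$ and hence $y=p-\delta\,n(p)=q$.
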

	\begin{proof}
		\upshape See~\cite[Lemma~1]{CalzolaEtAl2023}; parts (i) and (ii) are taken from~\cite[Lemma~14.16]{GilbargTrudinger2001}.
	\end{proof}

	\section{Probabilistic construction of the fully discrete scheme}
	\label{sec:scheme}
	
	\subsection{Spatial grid and \texorpdfstring{$\mathbb P_1$}{P1} interpolation}
	
	Let $\cO_h$ be the polyhedral approximation of $\cO$ and let $\mathbb T_h$
	be a fitted, shape-regular simplicial triangulation constructed as
	in~\cite[Section~3]{CalzolaEtAl2023}.  Its spatial mesh size is
	\[
	h:=\max_{\mathsf T\in\mathbb T_h}\operatorname{diam}(\mathsf T)
	\]
	and tends to zero along the mesh family.  Write
	\[
	\cG_h=\{x_i:i\in\mathcal I_h\}\subset\overline\cO,\qquad
	\mathcal I_h=\mathcal I_h^\circ\mathbin{\dot\cup}\mathcal I_h^\partial,
	\]
	where $x_i\in\cO$ for $i\in\mathcal I_h^\circ$ and
	$x_i\in\partial\cO$ for $i\in\mathcal I_h^\partial$.  As in the cited
	construction, $\widehat{\mathbb T}_h$ denotes the corresponding exact
	triangulation of $\overline\cO$ with the same vertices and with curved
	boundary elements.
	
	Let $\{\psi_j\}_{j\in\mathcal I_h}$ be the continuous piecewise-linear
	($\mathbb P_1$) nodal basis on $\mathbb T_h$, and let $p_h$ be the
	elementwise map from $\widehat{\mathbb T}_h$ to $\mathbb T_h$ used
	in~\cite[Section~3]{CalzolaEtAl2023}.  For a grid vector
	$V=(V_j)_{j\in\mathcal I_h}$ define
	\begin{equation}
		I_h[V](x):=\sum_{j\in\mathcal I_h}\psi_j(p_h(x))V_j,
		\qquad x\in\overline\cO.
		\label{eq:interpolation}
	\end{equation}
	This $\mathbb P_1$ interpolant is the only interpolation operator used below.
	Its barycentric weights are nonnegative and sum to one; hence $I_h$ is
	positivity preserving.  Moreover, the standard estimate proved for this
	mesh construction in~\cite[Lemma~2]{CalzolaEtAl2023} gives, for every
	$\phi\in C^2(\overline\cO)$,
	\begin{equation}
		\bigl\|I_h[\phi|_{\cG_h}]-\phi\bigr\|_{L^\infty(\overline\cO)}
		\le C_\phi h^2 .
		\label{eq:interp-error}
	\end{equation}
	
	Choose $N_T\ge1$ and set
	\[
	\dt=\frac{T}{N_T},\qquad t_n=n\dt,\qquad n=0,\ldots,N_T.
	\]
	\subsection{Reflected Feynman--Kac representation}
	
	We first suppress the controls in order to show where every term of the update comes from.  Consider
	\begin{equation}
		\begin{cases}
			-u_t-\dfrac12\Tr(\bar\sigma\bar\sigma^\top D^2u)-\bar\mu\cdot Du+\bar r u-\bar f=0,&(t,x)\in \cO_T,\\
			\bar\gamma\cdot Du+\bar k u=\bar g,&(t,x)\in[0,T)\times\partial\cO,\\
			u(T,x)=\Psi(x),&x\in\overline\cO.
		\end{cases}
		\label{eq:linear-problem}
	\end{equation}
	For this derivation, assume temporarily that a reflected semimartingale pair $(X,L)$ solving the equation below exists, that $u\in C^{1,2}([0,T]\times\overline\cO)$, and that the coefficients and $u$ have the boundedness and integrability needed for It\^{o}'s formula and conditional expectation.  These are auxiliary classical hypotheses only; the convergence proof below neither invokes the representation nor assumes this regularity.  Let
	\[
	dX_s=\bar\mu(s,X_s)\,ds+\bar\sigma(s,X_s)\,dW_s-\bar\gamma(X_s)\,dL_s,
	\]
	where $L$ is nondecreasing and increases only on $\partial\cO$.  The minus sign is consistent with $\bar\gamma$ pointing outward.  For $s\ge t_n$, define
	\[
	A_s=\exp\!\left(-\int_{t_n}^s\bar r(q,X_q)\,dq-\int_{t_n}^s\bar k(q,X_q)\,dL_q\right).
	\]
	It\^{o}'s formula gives the one-step conditional representation
	\begin{equation}
		\begin{aligned}
			u(t_n,x_i)=\E\Bigg[&A_{t_{n+1}}u(t_{n+1},X_{t_{n+1}})
			+\int_{t_n}^{t_{n+1}}A_s\bar f(s,X_s)\,ds\\
			&+\int_{t_n}^{t_{n+1}}A_s\bar g(s,X_s)\,dL_s
			\,\Big|\,X_{t_n}=x_i\Bigg].
		\end{aligned}
		\label{eq:FK}
	\end{equation}
	This identity, rather than a characteristic interpolation formula, is the starting point of the discrete method.
	
	\subsection{Equal-probability discretization of the linear representation}
	
	Set $m:=N_p$, the Brownian dimension.  Let $\xi\in\R^m$ have independent Rademacher components:
	\[
	\mathbb P(\xi_\ell=1)=\mathbb P(\xi_\ell=-1)=\frac12,
	\qquad \ell=1,\ldots,m.
	\]
	Its $P=2^m$ possible values are denoted by $\xi^1,\ldots,\xi^P$.  Every value occurs with probability $1/P$, and
	\begin{equation}
		\frac1P\sum_{p=1}^P\xi^p=0,
		\qquad
		\frac1P\sum_{p=1}^P\xi^p(\xi^p)^\top=I_m.
		\label{eq:cubature}
	\end{equation}
	These equal probabilities are part of the definition of the scheme and are not free numerical parameters.
	
	We first discretize the linear problem~\eqref{eq:linear-problem}.  Write
	$\overline\gamma=\gamma(\cdot,\overline b)$ for its fixed boundary
	parameter and abbreviate
	$p^{\overline\gamma}=p^{\gamma_{\overline b}}$ and
	$d^{\overline\gamma}=d^{\gamma_{\overline b}}$.  We retain the trial-point notation used later for the HJB
	scheme.  Its $p$th weak-Euler trial point is
	\begin{equation}
		X_{n,i}^{*,p}=x_i+\dt\,\overline\mu(t_n,x_i)
		+\sqrt{\dt}\,\overline\sigma(t_n,x_i)\xi^p.
		\label{eq:linear-trial}
	\end{equation}
	If $X_{n,i}^{*,p}\in\overline\cO$, put
	\[
	\widetilde X_{n,i}^p=X_{n,i}^{*,p},
	\qquad
	\widetilde d_{n,i}^p=\widetilde k_{n,i}^p=\widetilde g_{n,i}^p=0.
	\]
	If $X_{n,i}^{*,p}\notin\overline\cO$, let
	\begin{align}
		x_{n,i}^{\overline b,p}&=p^{\overline\gamma}(X_{n,i}^{*,p}),
		&\widetilde d_{n,i}^p&=2d^{\overline\gamma}(X_{n,i}^{*,p}),
		\label{eq:linear-projection}\\
		\widetilde X_{n,i}^p
		&=X_{n,i}^{*,p}-\widetilde d_{n,i}^p
		\overline\gamma(x_{n,i}^{\overline b,p}),
		\label{eq:linear-mirror}\\
		\widetilde k_{n,i}^p&=\overline k(t_n,x_{n,i}^{\overline b,p}),
		&\widetilde g_{n,i}^p&=\overline g(t_n,x_{n,i}^{\overline b,p}).
		\label{eq:linear-boundary-data}
	\end{align}
	The returned point $\widetilde X_{n,i}^p$ is the mirror image of the trial
	point about its oblique projection, and the round-trip distance
	$\widetilde d_{n,i}^p$ is used as the discrete increment of the regulator
	$L$ in~\eqref{eq:FK}.
	
	We now approximate the three terms in~\eqref{eq:FK} separately.  First freeze the ordinary-time coefficient at $(t_n,x_i)$.  The scalar discount is approximated positively by
	\begin{equation}
		e^{-\overline r(t_n,x_i)\dt}
		=\frac{1}{1+\overline r(t_n,x_i)\dt}+O(\dt^2).
		\label{eq:ordinary-discount}
	\end{equation}
	Freezing the boundary coefficient on branch $p$ then gives the continuation approximation
	\begin{equation}
		\frac{1}{1+\overline r(t_n,x_i)\dt}\,
		\frac1P\sum_{p=1}^P
		e^{-\widetilde k_{n,i}^p\widetilde d_{n,i}^p}
		u(t_{n+1},\widetilde X_{n,i}^p).
		\label{eq:linear-continuation}
	\end{equation}
	Second, the ordinary-time source is approximated by the left-endpoint rule,
	\begin{equation}
		\E\!\left[\int_{t_n}^{t_{n+1}}A_s\overline f(s,X_s)\,ds\right]
		\approx\dt\,\overline f(t_n,x_i).
		\label{eq:linear-volume-source}
	\end{equation}
	Third, on a branch with local-time increment $D$, freezing $K$ and $G$ gives
	\[
	\int_0^D e^{-K\ell}G\,d\ell.
	\]
	The midpoint rule yields
	\begin{equation}
		\int_0^D e^{-K\ell}G\,d\ell
		=D e^{-KD/2}G+O(D^3).
		\label{eq:midpoint-local-time}
	\end{equation}
	Since $\widetilde d_{n,i}^p=O(\sqrt\dt)$, the boundary-source contribution becomes
	\begin{equation}
		\frac{1}{1+\overline r(t_n,x_i)\dt}\,
		\frac1P\sum_{p=1}^P
		\widetilde d_{n,i}^p e^{-\widetilde k_{n,i}^p\widetilde d_{n,i}^p/2}
		\widetilde g_{n,i}^p,
		\label{eq:linear-boundary-source}
	\end{equation}
	up to the local $O(\dt^{3/2})$ quadrature error used later in the consistency proof.
	
	Replacing the next-time solution by a grid vector $V$ and using the $\mathbb P_1$ interpolant gives the linear interior operator
	\begin{equation}
		\begin{aligned}
			\overline{\cS}_{n,i}[V]
			=\frac{1}{1+\overline r(t_n,x_i)\dt}\frac1P\sum_{p=1}^P
			\Big[&e^{-\widetilde k_{n,i}^p\widetilde d_{n,i}^p}
			I_h[V](\widetilde X_{n,i}^p)\\
			&+\widetilde d_{n,i}^p
			e^{-\widetilde k_{n,i}^p\widetilde d_{n,i}^p/2}
			\widetilde g_{n,i}^p\Big]
			+\dt\,\overline f(t_n,x_i).
		\end{aligned}
		\label{eq:linear-scheme}
	\end{equation}
	For the boundary equation choose a purely spatial offset $\ell_h$, independent
	of $\dt$, such that $c_0h\le\ell_h\le c_1h$ for two fixed constants
	$0<c_0\le c_1$.  The complete linear method is
	\begin{equation}
		\begin{cases}
			U_i^n=\overline{\cS}_{n,i}[U^{n+1}],
			&n=N_T-1,\ldots,0,\quad i\in\mathcal I_h^\circ,\\[6pt]
			\displaystyle
			\frac{U_i^n-I_h[U^n](x_i-\ell_h\overline\gamma(x_i))}{\ell_h}
			+\overline k(t_n,x_i)U_i^n-\overline g(t_n,x_i)=0,
			&n=N_T-1,\ldots,0,\quad i\in\mathcal I_h^\partial,\\[9pt]
			U_i^{N_T}=\Psi(x_i),&i\in\mathcal I_h .
		\end{cases}
		\label{eq:complete-linear-scheme}
	\end{equation}
	Thus one first computes the interior values from $U^{n+1}$ and then solves
	the same-level boundary equations.  These equations can be implicit because
	the simplex containing $x_i-\ell_h\overline\gamma(x_i)$ may have boundary
	vertices.  Their coefficient matrix is a sparse nonsingular $M$-matrix by
	the $\mathbb P_1$ weight argument stated after the general scheme below.
	The coefficient $\overline k$ also acts on every exiting interior branch
	through local-time attenuation, while every branch probability remains
	exactly $1/P$.
	\subsection{Extension to the controlled HJB problem}
	
	We now return to~\eqref{eq:HJB-main}.  For one fixed evaluation of the interior
	update at $(t_n,x_i)$, choose $a\in A$ and $b\in B$ and freeze this pair over
	the corresponding one-step transition.  The same $b$ is used for all $P$
	equally probable branches emanating from $x_i$; a different boundary control
	is not selected on each branch.  After the $1/P$-weighted branch average has
	been formed, the infimum over $(a,b)$ is taken at that node.  Controls may
	therefore be selected anew at different nodes and time levels.  Using the
	notation of the original manuscript, the $p$th trial point is
	\begin{equation}
		X_{n,i}^{*,p}(a)=x_i+\dt\,\mu(t_n,x_i,a)
		+\sqrt{\dt}\,\sigma(t_n,x_i,a)\xi^p.
		\label{eq:trial}
	\end{equation}
	If $X_{n,i}^{*,p}(a)\in\overline\cO$, set
	\[
	\widetilde X_{n,i}^p(a,b)=X_{n,i}^{*,p}(a),\qquad
	\widetilde d_{n,i}^p(a,b)=\widetilde k_{n,i}^p(a,b)=
	\widetilde g_{n,i}^p(a,b)=0.
	\]
	If $X_{n,i}^{*,p}(a)\notin\overline\cO$, set
	\begin{align}
		x_{n,i}^{b,p}(a)&=p^{\gamma_b}(X_{n,i}^{*,p}(a)),\\
		\widetilde d_{n,i}^p(a,b)&=2d^{\gamma_b}(X_{n,i}^{*,p}(a)),\\
		\widetilde X_{n,i}^p(a,b)&=X_{n,i}^{*,p}(a)
		-\widetilde d_{n,i}^p(a,b)\gamma_b(x_{n,i}^{b,p}(a)),\\
		\widetilde k_{n,i}^p(a,b)&=k(t_n,x_{n,i}^{b,p}(a),b),\\
		\widetilde g_{n,i}^p(a,b)&=g(t_n,x_{n,i}^{b,p}(a),b).
		\label{eq:branch-data}
	\end{align}
	Proposition~\ref{prop:oblique-proj} makes these quantities well defined for sufficiently small $\dt$.  Uniformly in all nodes and controls,
	\begin{equation}
		|X_{n,i}^{*,p}(a)-x_i|+\widetilde d_{n,i}^p(a,b)\le C\sqrt{\dt}.
		\label{eq:step-bound}
	\end{equation}
	Indeed, continuity on the compact coefficient set and finiteness of the
	Rademacher family give, for $\dt\le1$,
	\[
	|X_{n,i}^{*,p}(a)-x_i|
	\le\dt\|\mu\|_\infty
	+\sqrt\dt\|\sigma\|_\infty\max_{1\le q\le P}|\xi^q|
	\le C_0\sqrt\dt .
	\]
	If the trial point is exterior, the segment joining $x_i\in\cO$ to
	$X_{n,i}^{*,p}(a)$ meets $\partial\cO$, and hence
	\[
	\dist(X_{n,i}^{*,p}(a),\partial\cO)
	\le|X_{n,i}^{*,p}(a)-x_i|.
	\]
	For sufficiently small $\dt$, Lemma~\ref{lem:oblique-distance} applies and
	gives
	\[
	0<\widetilde d_{n,i}^p(a,b)
	=2d^{\gamma_b}(X_{n,i}^{*,p}(a))
	\le2C_\gamma|X_{n,i}^{*,p}(a)-x_i|
	\le C\sqrt\dt.
	\]
	The same lemma shows that $\widetilde X_{n,i}^p(a,b)\in\cO$.  For a
	non-exiting branch $\widetilde d_{n,i}^p=0$ and
	$\widetilde X_{n,i}^p=X_{n,i}^{*,p}\in\overline\cO$.  Thus
	\eqref{eq:step-bound} holds and every interpolation value below is well
	defined.
	
	Applying the already derived linear recurrence with the frozen coefficients gives, for a grid vector $V$, the following operator.  In its defining sum only, we suppress the common arguments $(n,i,a,b)$:
	\begin{equation}
		\begin{aligned}
			\cS_{n,i}^{a,b}[V]
			=\frac{1}{1+r(t_n,x_i,a)\dt}\frac1P\sum_{p=1}^P
			\Big[e^{-\widetilde k_p\widetilde d_p}I_h[V](\widetilde X_p)
			+\widetilde d_p e^{-\widetilde k_p\widetilde d_p/2}\widetilde g_p\Big]
			+\dt f(t_n,x_i,a).
		\end{aligned}
		\label{eq:fixed-operator}
	\end{equation}
	For $i\in\mathcal I_h^\circ$, the HJB operator minimizes the fixed-control conditional expectation:
	\begin{equation}
		S_{n,i}[V]=\inf_{(a,b)\in A\times B}\cS_{n,i}^{a,b}[V].
		\label{eq:HJB-operator}
	\end{equation}
	
	For $i\in\mathcal I_h^\partial$ and $b\in B$, set
	\[
	y_{i,b}:=x_i-\ell_h\gamma(x_i,b).
	\]
	Uniform obliqueness implies $y_{i,b}\in\cO$ for all sufficiently small $h$,
	uniformly in $i$ and $b$.  Set
	\[
	\omega_{ij}^b:=\psi_j(p_h(y_{i,b})).
	\]
	The fixed-control and controlled boundary updates are
	\begin{align}
		\cS_{n,i}^{\partial,b}[V]
		&:=\frac{I_h[V](y_{i,b})+\ell_h\,g(t_n,x_i,b)}
		{1+\ell_h\,k(t_n,x_i,b)},
		\label{eq:fixed-boundary-map}\\
		S_{n,i}^{\partial}[V]&:=\inf_{b\in B}\cS_{n,i}^{\partial,b}[V].
		\label{eq:controlled-boundary-map}
	\end{align}
	Since $1+\ell_h\,k(t_n,x_i,b)>0$ and $B$ is compact,
	$V_i=S_{n,i}^{\partial}[V]$ is equivalent to
	\begin{equation}
		\sup_{b\in B}
		\left\{
		\frac{V_i-I_h[V](x_i-\ell_h\gamma(x_i,b))}{\ell_h}
		+k(t_n,x_i,b)V_i-g(t_n,x_i,b)
		\right\}=0.
		\label{eq:discrete-boundary-residual}
	\end{equation}
	
	The complete nonlinear method is therefore
	\begin{equation}
		\begin{cases}
			U_i^n=S_{n,i}[U^{n+1}],
			&n=N_T-1,\ldots,0,\quad i\in\mathcal I_h^\circ,\\[7pt]
			\begin{aligned}
				\sup_{b\in B}\biggl\{&
				\frac{U_i^n-I_h[U^n](x_i-\ell_h\gamma(x_i,b))}{\ell_h}\\[-2pt]
				&+k(t_n,x_i,b)U_i^n-g(t_n,x_i,b)
				\biggr\}=0,
			\end{aligned}
			&n=N_T-1,\ldots,0,\quad i\in\mathcal I_h^\partial,\\[10pt]
			U_i^{N_T}=\Psi(x_i),&i\in\mathcal I_h .
		\end{cases}
		\label{eq:scheme}
	\end{equation}
	
	\begin{lemma}[Uniform interior interpolation weight]
		\label{lem:uniform-interior-weight}
		There exist $h_0>0$ and $\theta\in(0,1)$, independent of $h$, $i$, and $b$,
		such that, for $0<h\le h_0$,
		\begin{equation}
			\sum_{j\in\mathcal I_h^\circ}\omega_{ij}^b\ge\theta .
			\label{eq:boundary-row-margin}
		\end{equation}
	\end{lemma}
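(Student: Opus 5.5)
The plan is to reduce the weight bound to a statement about the $\mathbb P_1$ barycentric coordinates of the point $z:=p_h(y_{i,b})$ inside its simplex $\mathsf T\in\mathbb T_h$.

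\textbf{Reduction.} The $\mathbb P_1$ weights $\omega_{ij}^b$ are nonnegative and sum to one. Hence
\[
\sum_{j\in\mathcal I_h^\circ}\omega_{ij}^b=1-\sum_{j\in\mathcal I_h^\partial}\omega_{ij}^b ,
\]
and it suffices to show that the boundary vertices of $\mathsf T$ carry total barycentric weight at most $1-\theta$. Let $\Lambda(z)$ be the sum of the barycentric coordinates of $z$ associated with the interior vertices of $\mathsf T$; this is exactly the quantity to be bounded below. Two simple cases come first. If $\mathsf T$ has no boundary vertex, then $\Lambda\equiv 1$. If every vertex of $\mathsf T$ lies on $\partial\cO$, we exclude this case: for a fitted mesh constructed as in \cite{CalzolaEtAl2023} we will argue that $z$, lying at distance $\asymp h$ from the boundary, cannot belong to such a flat boundary simplex. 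If necessary, we first shrink $c_0$ relative to the shape-regularity constants.

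\textbf{Key geometric estimate.} By Lemma~\ref{lem:oblique-distance}, or directly from the Taylor expansion $\rho(x_i-s\gamma_b)=-s\,n\cdot\gamma_b+O(s^2)$ used in its proof, we have
\[
\dist(y_{i,b},\partial\cO)\ge \tfrac{\nu_0}{2}\ell_h\ge \tfrac{\nu_0c_0}{2}h
\]
for small $h$, uniformly in $i$ and $b$. The map $p_h$ moves points by at most $O(h^2)$, since $\partial\cO\in C^3$ and the curved elements are $O(h^2)$ perturbations. So $z$ still has distance at least $c h$ from $\partial\cO_h$ for small $h$.

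Now let $F$ be the face of $\mathsf T$ spanned by its boundary vertices; it is a proper face by the exclusion above. Then $\Lambda$ is an affine function on $\mathsf T$ that vanishes on $F$ and equals one at the opposite interior vertices. The boundary vertices lie on $\partial\cO$, and $\partial\cO_h$ is within $O(h^2)$ of $\partial\cO$. Consequently every point of $F$ lies within $O(h^2)$ of $\partial\cO$ when $F$ is a boundary face, and within $O(h)$ in general. The coordinate $\Lambda(z)$ equals the distance from $z$ to the hyperplane of $F$, measured in the direction transversal to $F$, divided by the height of $\mathsf T$ over $F$. Shape regularity bounds that height by $Ch$.

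\textbf{Main obstacle.} We must show that this transversal distance is at least $c'h$, rather than merely showing that $z$ is far from $\partial\cO$. When $F$ is a full boundary facet, the facet hyperplane is $O(h^2)$-close to the tangent plane, so
\[
\Lambda(z)\ge \frac{c h-O(h^2)}{Ch}\ge\theta .
\]
When $F$ is a lower-dimensional face, such as a single boundary vertex, we will use $\Lambda(z)\ge 1-\max_{v\in F}\lambda_v(z)$ and bound each $\lambda_v(z)$ away from one. Shape regularity gives $|z-v|\ge c\,\lambda$-deficit$\cdot h$. Combined with
\[
|z-v|\ge \dist(z,\partial\cO)\ge ch ,
\]
together with the fact that $\lambda_v$ is Lipschitz with constant $C/h$, this yields
\[
1-\lambda_v(z)\ge c|z-v|/h\ge c'' .
\]
Indeed, $1-\lambda_v(z)$ is comparable to $|z-v|/h$ uniformly over the simplex, which is exactly what shape regularity provides. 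Summing over the at most $N$ boundary vertices, or better applying the same affine-distance argument to the face $F$ directly, gives a uniform $\theta$. The constants depend only on $\nu_0$, $c_0$, $c_1$, $N$ and the shape-regularity constant, so $\theta$ and $h_0$ are independent of $h$, $i$ and $b$.
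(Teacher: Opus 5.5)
Your reduction to bounding $\Lambda(z)=1-\sum_{v\in F}\lambda_v(z)$ and your full-facet case are fine. The case of a lower-dimensional boundary face, however, does not go through. The inequality $\Lambda(z)\ge 1-\max_{v\in F}\lambda_v(z)$ is reversed. Since the $\lambda_v$ are nonnegative, $\Lambda(z)\le 1-\max_{v\in F}\lambda_v(z)$, with equality only when $F$ is a single vertex. Bounding each $\lambda_v(z)\le 1-c''$ separately and summing gives only $\sum_{v\in F}\lambda_v(z)\le |F|(1-c'')$, which is vacuous.

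For $N\ge3$ this is a genuine obstruction. Let $F=[v_1,v_2]$ be an edge with both endpoints on $\partial\cO$ and the remaining vertices interior. A point near the midpoint of $F$ has $\lambda_{v_1}\approx\lambda_{v_2}\approx\tfrac12$, so every per-vertex bound holds, yet $\Lambda\approx0$. Such points are excluded only because the whole segment $F$ lies within $O(h^2)$ of $\partial\cO$, and your vertex-by-vertex estimate never uses this. Your fallback, applying the affine-distance argument to $F$ directly, does not repair it. For lower-dimensional $F$ the zero set of $\Lambda$ is the hyperplane through $F$ parallel to the complementary face; its orientation is dictated by the interior vertices, and it need not be close to the tangent plane. (For $N=2$ every proper boundary face is a vertex or a facet, so there your two cases suffice.)

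Two smaller points:
\begin{itemize}
\item The all-boundary-vertex case is only announced, and shrinking $c_0$ cannot help there, since a smaller $c_0$ only weakens the distance bound for $y_{i,b}$.
\item The bound $1-\lambda_v(z)\ge|z-v|/h$ follows from $z-v=(1-\lambda_v(z))(z'-v)$ with $z'$ on the opposite facet. It does not follow from the Lipschitz bound on $\lambda_v$, which gives the opposite inequality.
\end{itemize}

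The missing idea is to measure the distance from $z$ to the face $F$ itself. Let $G$ be the complementary face and write $z=(1-\Lambda)z_F+\Lambda z_G$ with $z_F\in F$, $z_G\in G$. Then $\dist(z,F)\le\Lambda(z)\operatorname{diam}\mathsf T\le h\,\Lambda(z)$, with no shape regularity needed. A second-order Taylor expansion of the $C^2$ signed distance at a convex combination of boundary points at mutual distance at most $h$ shows that every point of $F$ lies within $Ch^2$ of $\partial\cO$. Combined with your bound $\dist(z,\partial\cO)\ge ch-Ch^2$, this gives $\Lambda(z)\ge c-Ch$ uniformly in $i$ and $b$. It treats facets, lower-dimensional faces, and the all-boundary simplex (where $\Lambda\equiv0$ would force $z\in F$) at once.

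The paper packs the same mechanism into a single chain. It tests the barycentric weights against a $C^2$ function $\varrho$ equal to $\dist(\cdot,\partial\cO)$ near the boundary:
\[
\tfrac{c_0\nu}{2}h\le I_h[\varrho|_{\cG_h}](y_{i,b})=\sum_j\omega_{ij}^b\varrho(x_j)\le h\sum_{j\in\mathcal I_h^\circ}\omega_{ij}^b .
\]
The lower bound comes from Taylor's formula at $x_i$ and the $O(h^2)$ interpolation estimate, which also absorbs $p_h$. The upper bound comes from $\varrho=0$ at boundary vertices and $\varrho\le h$ at the other vertices of a simplex touching $\partial\cO$. That route needs no barycentric geometry, no case split, and no separate estimate on $p_h$.
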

	\begin{proof}
		Let
		\[
		\nu:=\min_{(x,b)\in\partial\cO\times B}
		n(x)\cdot\gamma(x,b)>0,
		\]
		and choose $\varrho\in C^2(\overline\cO)$ that agrees with
		$\dist(\cdot,\partial\cO)$ in a fixed inner tubular neighbourhood.  Since
		$D\varrho=-n$ on $\partial\cO$, Taylor's formula and
		$c_0h\le\ell_h\le c_1h$ give, uniformly in $i$ and $b$,
		\[
		\varrho(y_{i,b})
		=\ell_h n(x_i)\cdot\gamma(x_i,b)+O(\ell_h^2)
		\ge c_0\nu h-Ch^2.
		\]
		The interpolation estimate~\eqref{eq:interp-error} therefore implies, for
		all sufficiently small $h$,
		\[
		I_h[\varrho|_{\cG_h}](y_{i,b})\ge\frac{c_0\nu}{2}h.
		\]
		
		Let $\mathsf T\in\mathbb T_h$ be an active simplex for $p_h(y_{i,b})$.
		If $\mathsf T$ has no boundary vertex, every nonzero barycentric weight is
		attached to an interior node and the sum in~\eqref{eq:boundary-row-margin}
		equals one.  Otherwise choose a boundary vertex
		$x_q\in\mathsf T\cap\partial\cO$.  Every vertex $x_j$ of $\mathsf T$
		satisfies
		\[
		\dist(x_j,\partial\cO)\le|x_j-x_q|
		\le\operatorname{diam}(\mathsf T)\le h.
		\]
		For small $h$, $\varrho(x_j)=\dist(x_j,\partial\cO)$ at all these vertices,
		and $\varrho$ vanishes at the boundary vertices.  Locality and nonnegativity
		of the $\mathbb P_1$ weights now give
		\[
		I_h[\varrho|_{\cG_h}](y_{i,b})
		=\sum_j\omega_{ij}^b\varrho(x_j)
		\le h\sum_{j\in\mathcal I_h^\circ}\omega_{ij}^b.
		\]
		The last two estimates prove~\eqref{eq:boundary-row-margin}, for example
		with $\theta=\min\{1/2,c_0\nu/2\}$.  This is precisely where the lower
		offset bound $\ell_h\ge c_0h$ is used.
	\end{proof}
	
	After the interior values at time level $t_n$ have been computed, regard the
	right-hand side of~\eqref{eq:controlled-boundary-map} as a map $T_n$ of the
	boundary vector.  Lemma~\ref{lem:uniform-interior-weight}, $k\ge0$, and the
	partition-of-unity property give directly
	\[
	\|T_nz-T_n\widetilde z\|_\infty
	\le\sup_{i,b}
	\frac{\sum_{j\in\mathcal I_h^\partial}\omega_{ij}^b}
	{1+\ell_hk(t_n,x_i,b)}
	\|z-\widetilde z\|_\infty
	\le(1-\theta)\|z-\widetilde z\|_\infty.
	\]
	Hence the boundary values are obtained by the contraction iteration
	$z^{(q+1)}=T_nz^{(q)}$.  In the linear case the same row-margin calculation
	shows directly that the boundary coefficient matrix is a sparse nonsingular
	$M$-matrix.  Indeed, writing $k_i=k(t_n,x_i)$ for fixed linear data, its
	boundary block has entries
	\[
	M_{ii}=1+\ell_h k_i-\omega_{ii},\qquad
	M_{ij}=-\omega_{ij}\quad(i\ne j),
	\]
	and its strict row margin is
	\[
	M_{ii}-\sum_{j\ne i}|M_{ij}|
	=\ell_h k_i+\sum_{j\in\mathcal I_h^\circ}\omega_{ij}
	\ge\theta.
	\]
	\begin{remark}[Coupling of the two Robin weights]\label{rem:coupled-weights}
		Let $x_b\in\partial\cO$,
		$X^*=x_b+(\widetilde d/2)\gamma$, and
		$\widetilde X=x_b-(\widetilde d/2)\gamma$.  For a smooth test function,
		Taylor expansion gives
		\[
		e^{-k\widetilde d}\phi(\widetilde X)
		+\widetilde d e^{-k\widetilde d/2}g-\phi(X^*)
		=-\widetilde d\bigl(\gamma\cdot D\phi+k\phi-g\bigr)(x_b)
		+\frac{k\widetilde d^2}{2}
		\bigl(\gamma\cdot D\phi+k\phi-g\bigr)(x_b)
		+O(\widetilde d^3).
		\]
		Thus $e^{-k\widetilde d}$ is not a modification of the branch probability $1/P$; it is
		the local-time attenuation of the continuation value.  The midpoint factor in
		the boundary source makes the entire quadratic term a multiple of the same
		Robin residual.  This elementary identity is used explicitly in the
		near-boundary branch expansion below.
	\end{remark}
	
	\begin{remark}[Polygonal implementations]
		The convergence theorem below is proved for a $C^3$ boundary, where the control-uniform oblique projection is available.  In the disk experiments the branch projection and reflection use the exact circle, while interpolation is performed on a fitted regular polygon after an $O(h^2)$ radial cap map.  On a rectangle, a numerical branch can be reflected face by face using the constant outward normal of each face.  Mixed Robin--Dirichlet stopping requires a separate first-hit rule and is reported only as a numerical extension; it is not used as evidence for the smooth-domain convergence theorem.
	\end{remark}
	
	The role of the Robin term is now unambiguous.  When $\widetilde k\widetilde d>0$, the coefficient of the unknown in an exiting branch is $e^{-\widetilde k\widetilde d}<1$.  No change of the prescribed source alone can reproduce this coefficient for every continuation vector.  This observation distinguishes the local-time attenuation from an additive modification of an oblique-derivative boundary update, without changing the equal probability assigned to the branch.
	
	\section{Properties of the fully discrete scheme}
	\label{sec:properties}
	
	For $(t,x,p,M,z,a)\in\overline{\cO}_T\times\R^N\times\R^{N\times N}\times\R\times A$
	and $(t,x,p,z,b)\in[0,T]\times\partial\cO\times\R^N\times\R\times B$, define
	the control-resolved operators
	\begin{equation}
		\begin{aligned}
			\mathcal H(t,x,p,M,z,a)
			&:=-\frac12\Tr\!\left(\sigma\sigma^\top(t,x,a)M\right)
			-\mu(t,x,a)\cdot p+r(t,x,a)z-f(t,x,a),\\
			\mathcal L(t,x,p,z,b)
			&:=\gamma(x,b)\cdot p+k(t,x,b)z-g(t,x,b).
		\end{aligned}
		\label{eq:calH-calL}
	\end{equation}
	Thus $H=\sup_{a\in A}\mathcal H$ and $L=\sup_{b\in B}\mathcal L$.  As in the
	original consistency calculation, set
	\begin{equation}
		\mathcal R(t,x,p,z,b)
		:=\frac12 k(t,x,b)^2z-\frac12 k(t,x,b)g(t,x,b)
		+\frac12 k(t,x,b)\gamma(x,b)\cdot p.
		\label{eq:calR}
	\end{equation}
	In particular,
	\begin{equation}
		\mathcal R(t,x,p,z,b)=\frac12 k(t,x,b)\mathcal L(t,x,p,z,b).
		\label{eq:R-equals-kL}
	\end{equation}
	For an exiting branch let
	$x_b^p:=x_{n,i}^{b,p}(a)=p^{\gamma_b}(X_{n,i}^{*,p}(a))\in\partial\cO$ and define
	\begin{equation}
		\widetilde{\mathcal L}_{n,i}^p(q,v,a,b):=
		\begin{cases}
			0,&X_{n,i}^{*,p}(a)\in\overline\cO,\\[3pt]
			\mathcal L(t_n,x_b^p,q,v,b),&X_{n,i}^{*,p}(a)\notin\overline\cO,
		\end{cases}
		\label{eq:tildeL}
	\end{equation}
	and define $\widetilde{\mathcal R}_{n,i}^p$ in the same way with
	$\mathcal L$ replaced by $\mathcal R$.  On a non-exiting branch the displayed
	arguments involving $x_b^p$ are ignored because both truncated operators are
	defined to be zero.
	
	For a smooth function $\phi$, we use the original shorthand
	\[
	\bigl[-\partial_t\phi+\mathcal H\bigr]
	(t,x,D\phi,D^2\phi,\phi,a)
	:=-\partial_t\phi(t,x)
	+\mathcal H(t,x,D\phi(t,x),D^2\phi(t,x),\phi(t,x),a).
	\]
	
	\subsection{Monotonicity of the parabolic scheme}
	
	\begin{proposition}[Parabolic monotonicity]\label{prop:monotone}
		Under Assumptions~\ref{ass:H1}--\ref{ass:H4}, let $V\le W$ be two grid
		functions.  For every $n=0,\ldots,N_T-1$ the following assertions hold:
		\begin{align}
			\cS_{n,i}^{a,b}[V]&\le\cS_{n,i}^{a,b}[W],
			&S_{n,i}[V]&\le S_{n,i}[W],
			&&i\in\mathcal I_h^\circ, (a,b)\in A\times B,
			\label{eq:para-monotone-interior}\\
			\cS_{n,i}^{\partial,b}[V]&\le\cS_{n,i}^{\partial,b}[W],
			&S_{n,i}^{\partial}[V]&\le S_{n,i}^{\partial}[W],
			&&i\in\mathcal I_h^\partial, b\in B.
			\label{eq:para-monotone-boundary}
		\end{align}
		Consequently, one complete backward time-layer update is order preserving.
		Moreover, for every constant $c\ge0$,
		\begin{equation}
			S_{n,i}[V+c]\le S_{n,i}[V]+c,\qquad
			S_{n,i}^{\partial}[V+c]\le S_{n,i}^{\partial}[V]+c.
			\label{eq:para-shift}
		\end{equation}
		If $f,g,$ and $\Psi$ are nonnegative, then the discrete solution is
		nonnegative.
	\end{proposition}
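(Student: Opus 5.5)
The plan is to read off monotonicity directly from the structure of the fixed-control operators, since every occurrence of the grid vector enters linearly with nonnegative coefficients, and then pass to infima and to the implicit boundary solve.

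\emph{Interior nodes.} Fix $(a,b)$ and $i\in\mathcal I_h^\circ$. In \eqref{eq:fixed-operator} the vector $V$ appears only through $I_h[V](\widetilde X_p)=\sum_j\psi_j(p_h(\widetilde X_p))V_j$. These barycentric weights are nonnegative and sum to one; this makes sense because $\widetilde X_p\in\overline\cO$ by \eqref{eq:step-bound} and Lemma~\ref{lem:oblique-distance}. Each $I_h[V](\widetilde X_p)$ is multiplied by
\[
\frac{1}{P(1+r\dt)}e^{-\widetilde k_p\widetilde d_p}\in[0,1/P],
\]
where the bound uses $r,k\ge0$ (Assumption~\ref{ass:H4}) and $\widetilde d_p\ge0$. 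The remaining terms are independent of $V$. Hence $\cS^{a,b}_{n,i}$ is affine in $V$ with nonnegative weights, so $V\le W$ implies $\cS_{n,i}^{a,b}[V]\le\cS_{n,i}^{a,b}[W]$.

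For a constant $c\ge0$, the weights sum to at most one, so
\[
\cS^{a,b}_{n,i}[V+c]=\cS^{a,b}_{n,i}[V]+\frac{c}{1+r\dt}\frac1P\sum_p e^{-\widetilde k_p\widetilde d_p}\le\cS^{a,b}_{n,i}[V]+c.
\]
Taking the infimum over $(a,b)$ preserves both inequalities, which gives the interior halves of \eqref{eq:para-monotone-interior} and \eqref{eq:para-shift}.

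\emph{Boundary nodes.} The same argument applies verbatim to \eqref{eq:fixed-boundary-map}: the weights are $\omega_{ij}^b/(1+\ell_hk)$, which are nonnegative with sum at most one. Infimum over $b$ then gives \eqref{eq:para-monotone-boundary} and the boundary half of \eqref{eq:para-shift}.

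\emph{Order preservation of a full layer.} This is the only step that is not purely algebraic, because the boundary values at level $n$ are defined implicitly. Suppose $U^{n+1}\le\widehat U^{n+1}$. The interior values at level $n$ are then ordered by the interior monotonicity just shown. The boundary vector is the unique fixed point of the contraction $T_n$, established after Lemma~\ref{lem:uniform-interior-weight} with constant $1-\theta$. Its dependence on the boundary values is monotone, and so is its dependence on the already computed interior values at level $n$. Start the iteration $z^{(q+1)}=T_nz^{(q)}$ for both problems from the same initial boundary vector. By induction on $q$, using monotonicity of $S^\partial_{n,i}$ in its full argument, the iterates satisfy $z^{(q)}\le\widehat z^{(q)}$ for all $q$. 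Passing to the limit gives $U^n\le\widehat U^n$ at the boundary nodes. Induction downward in $n$ yields order preservation of the whole scheme. I expect this fixed-point step to be the main point needing care; the rest is bookkeeping.

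\emph{Nonnegativity.} Assume $f,g,\Psi\ge0$. Then every fixed-control operator maps nonnegative vectors to nonnegative values, because the source terms $\widetilde d_pe^{-\widetilde k_p\widetilde d_p/2}\widetilde g_p$, $\dt f$ and $\ell_hg$ are all $\ge0$. Consequently $S_{n,i}[V]\ge0$ and $S^\partial_{n,i}[V]\ge0$ whenever $V\ge0$. For the boundary solve, start the contraction iteration from $z^{(0)}=0$. Each iterate stays nonnegative, and so does the limit. Since $U^{N_T}=\Psi\ge0$, backward induction gives $U^n\ge0$ for all $n$.
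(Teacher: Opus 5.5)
Your proposal is correct and takes essentially the same approach as the paper. Both arguments rest on three facts: the interpolation and attenuation coefficients are nonnegative with total continuation mass at most one (using $r,k\ge0$), infima preserve order, and the implicit same-level boundary solve is an order-preserving contraction $T_n$. You simply write out in detail (iterating both problems from a common starting vector, and from $z^{(0)}=0$ for nonnegativity) what the paper's one-line proof asserts.
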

	
	\begin{proof}
		All assertions follow directly from~\eqref{eq:scheme}: the interpolation and branch coefficients are nonnegative, their continuation coefficients have total mass at most one, infima preserve order, and the same-level boundary equation is solved by an order-preserving contraction.
	\end{proof}
	
	For completeness, the constant in the shift estimate is explicit.  Put
	\[
	C_{rk}:=\max\{\|r\|_\infty,\|k\|_\infty\}.
	\]
	For every
	$i\in\mathcal I_h^\circ$, fixed $(a,b)$, and constant $c$, adding $c$ to
	$V$ multiplies it by
	\[
	\alpha_{n,i}^{a,b}:=
	\frac{P^{-1}\sum_{p=1}^P
		e^{-\widetilde k_{n,i}^p(a,b)\widetilde d_{n,i}^p(a,b)}}
	{1+r(t_n,x_i,a)\dt}\in[0,1].
	\]
	Since $1-e^{-z}\le z$,
	\begin{equation}
		\left|\cS_{n,i}^{a,b}[V+c]-\cS_{n,i}^{a,b}[V]-c\right|
		=|c|(1-\alpha_{n,i}^{a,b})
		\le C_{rk}|c|\left(\dt+\frac1P\sum_{p=1}^P
		\widetilde d_{n,i}^p(a,b)\right).
		\label{eq:shift-defect}
	\end{equation}
	
	\subsection{Consistency of the parabolic scheme}
	
	\begin{proposition}[Parabolic consistency]\label{prop:nodewise-consistency}
		Let $\phi$ be the restriction to $[0,T]\times\overline\cO$ of a $C^3$
		function defined on a fixed neighbourhood, with all derivatives used below
		bounded there, and put $\phi_{n,i}:=\phi(t_n,x_i)$.  For sufficiently small
		$\Delta t$ and $h$, the following statements hold uniformly in the indicated nodes
		and controls.
		
		\smallskip
		\noindent\textup{(I) Interior nodes.}
		For $i\in\mathcal I_h^\circ$, $a\in A$, and $b\in B$,
		\begin{align}
			&\cS_{n,i}^{a,b}\bigl[\phi(t_{n+1},\cdot)|_{\cG_h}\bigr]-\phi_{n,i}
			\notag\\
			&\quad=-\Delta t\,
			\bigl[-\partial_t\phi+\mathcal H\bigr]
			\bigl(t_n,x_i,D\phi_{n,i},D^2\phi_{n,i},\phi_{n,i},a\bigr)
			\notag\\
			&\qquad-\frac1P\sum_{p=1}^P\widetilde d_{n,i}^p(a,b)
			\widetilde{\mathcal L}_{n,i}^p
			\bigl(D\phi(t_n,x_b^p),\phi(t_n,x_b^p),a,b\bigr)
			\notag\\
			&\qquad+\frac1P\sum_{p=1}^P\bigl(\widetilde d_{n,i}^p(a,b)\bigr)^2
			\widetilde{\mathcal R}_{n,i}^p
			\bigl(D\phi(t_n,x_b^p),\phi(t_n,x_b^p),a,b\bigr)
			+O(\Delta t^{3/2}+h^2).
			\label{eq:reflected-expansion-fixed}
		\end{align}
		Consequently,
		\begin{align}
			&S_{n,i}\bigl[\phi(t_{n+1},\cdot)|_{\cG_h}\bigr]-\phi_{n,i}
			\notag\\
			&\quad=-\sup_{a\in A,\,b\in B}\Biggl\{
			\Delta t\,
			\bigl[-\partial_t\phi+\mathcal H\bigr]
			\bigl(t_n,x_i,D\phi_{n,i},D^2\phi_{n,i},\phi_{n,i},a\bigr)
			\notag\\
			&\hspace{25mm}+\frac1P\sum_{p=1}^P\widetilde d_{n,i}^p(a,b)
			\widetilde{\mathcal L}_{n,i}^p
			\bigl(D\phi(t_n,x_b^p),\phi(t_n,x_b^p),a,b\bigr)
			\notag\\
			&\hspace{25mm}-\frac1P\sum_{p=1}^P\bigl(\widetilde d_{n,i}^p(a,b)\bigr)^2
			\widetilde{\mathcal R}_{n,i}^p
			\bigl(D\phi(t_n,x_b^p),\phi(t_n,x_b^p),a,b\bigr)
			\Biggr\}
			+O(\Delta t^{3/2}+h^2).
			\label{eq:reflected-expansion-controlled}
		\end{align}
		
		\smallskip
		\noindent\textup{(II) Boundary nodes.}
		For $i\in\mathcal I_h^\partial$,
		\begin{equation}
			S_{n,i}^{\partial}\bigl[\phi(t_n,\cdot)|_{\cG_h}\bigr]-\phi_{n,i}
			=-\ell_h\sup_{b\in B}
			\mathcal L(t_n,x_i,D\phi_{n,i},\phi_{n,i},b)+O(h^2).
			\label{eq:boundary-controlled-consistency}
		\end{equation}
		The boundary formula is entirely at time $t_n$; it contains neither a time
		derivative nor the volume Hamiltonian.
	\end{proposition}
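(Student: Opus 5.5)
The plan is a branchwise Taylor expansion for the interior part and a one-line Taylor expansion for the boundary part.

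\textbf{Interior nodes, fixed control.} Fix $i\in\mathcal I_h^\circ$ and $(a,b)$, and write $\phi^{n+1}=\phi(t_{n+1},\cdot)$. The first step removes interpolation: by~\eqref{eq:interp-error}, and since every $\widetilde X_p\in\overline\cO$, we have
\[
I_h[\phi^{n+1}|_{\cG_h}](\widetilde X_p)=\phi^{n+1}(\widetilde X_p)+O(h^2).
\]
The attenuation weights $e^{-\widetilde k_p\widetilde d_p}\le1$ and the prefactor $(1+r\dt)^{-1}\le1$ keep this error $O(h^2)$ after averaging.

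Next, split each branch as
\[
e^{-\widetilde k_p\widetilde d_p}\phi^{n+1}(\widetilde X_p)+\widetilde d_pe^{-\widetilde k_p\widetilde d_p/2}\widetilde g_p
=\phi^{n+1}(X^{*,p})+\Bigl[e^{-\widetilde k_p\widetilde d_p}\phi^{n+1}(\widetilde X_p)+\widetilde d_pe^{-\widetilde k_p\widetilde d_p/2}\widetilde g_p-\phi^{n+1}(X^{*,p})\Bigr].
\]
On non-exiting branches the bracket vanishes identically.

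\textbf{Free part.} Here we use the $C^3$ extension of $\phi$ to a neighbourhood, so that $\phi$ can be evaluated at exterior trial points. The average $\frac1P\sum_p\phi^{n+1}(X^{*,p})$ is treated by the standard weak-Euler computation. Expand $\phi$ about $(t_n,x_i)$ to third order. By~\eqref{eq:cubature}, the first moment of $\xi$ vanishes and the second moment is $I_m$. The cubic terms are $O(\dt^{3/2})$ by~\eqref{eq:step-bound}. This gives
\[
\phi_{n,i}+\dt\bigl(\partial_t\phi+\mu\cdot D\phi+\tfrac12\Tr(\sigma\sigma^\top D^2\phi)\bigr)+O(\dt^{3/2}).
\]
Multiplying by $(1+r\dt)^{-1}=1-r\dt+O(\dt^2)$ and adding $\dt f$ produces $\phi_{n,i}-\dt[-\partial_t\phi+\mathcal H]+O(\dt^{3/2})$.

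\textbf{Exiting branches.} The bracket is handled by Remark~\ref{rem:coupled-weights}, applied at $(t_{n+1},x_b^p)$ with $X^*=x_b^p+(\widetilde d_p/2)\gamma_b(x_b^p)$ and $\widetilde X=x_b^p-(\widetilde d_p/2)\gamma_b(x_b^p)$. This matches the definitions~\eqref{eq:branch-data} exactly. The bracket then equals
\[
-\widetilde d_p\mathcal L+\tfrac12 k\widetilde d_p^2\mathcal L+O(\widetilde d_p^3),
\]
which is $-\widetilde d_p\widetilde{\mathcal L}_p+\widetilde d_p^2\widetilde{\mathcal R}_p+O(\dt^{3/2})$ by~\eqref{eq:R-equals-kL}.

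Three error terms must then be shown to be $O(\dt^{3/2})$:
\begin{itemize}
\item Replacing $t_{n+1}$ by $t_n$ inside $\mathcal L$ costs $\widetilde d_p\cdot O(\dt)=O(\dt^{3/2})$.
\item The boundary data $\widetilde k_p,\widetilde g_p$ are frozen at $t_n$, exactly as in $\mathcal L(t_n,\cdot)$, so this introduces no further error.
\item The prefactor $(1+r\dt)^{-1}$ multiplying the bracket only costs $\dt\cdot O(\sqrt\dt)$.
\end{itemize}
Here $\widetilde d_p=O(\sqrt\dt)$ by~\eqref{eq:step-bound}. The remainder constants in Remark~\ref{rem:coupled-weights} are uniform because $k,g$ are bounded and $\phi\in C^3$ near $\partial\cO$. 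Summing the free part and the exiting branches gives~\eqref{eq:reflected-expansion-fixed}.

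\textbf{Taking the infimum.} The $O(\cdot)$ term in~\eqref{eq:reflected-expansion-fixed} is uniform in $(a,b)$. Writing $\inf_{(a,b)}(-X_{a,b})=-\sup_{(a,b)}X_{a,b}$ then yields~\eqref{eq:reflected-expansion-controlled}.

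\textbf{Boundary nodes.} Fix $b$ and Taylor expand at $x_i$:
\[
I_h[\phi^n|_{\cG_h}](y_{i,b})=\phi_{n,i}-\ell_h\gamma\cdot D\phi_{n,i}+O(\ell_h^2)+O(h^2).
\]
Since $\ell_h\le c_1h$, both error terms are $O(h^2)$. Substituting into~\eqref{eq:fixed-boundary-map} and using $(1+\ell_hk)^{-1}=1-\ell_hk+O(h^2)$ gives
\[
\cS^{\partial,b}_{n,i}-\phi_{n,i}=-\ell_h\mathcal L(t_n,x_i,D\phi_{n,i},\phi_{n,i},b)+O(h^2),
\]
uniformly in $b$. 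Taking the infimum over $b$ yields~\eqref{eq:boundary-controlled-consistency}.

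\textbf{Main obstacle.} The delicate step is the exiting-branch bookkeeping: all remainders must be shown to be genuinely $O(\dt^{3/2})$ uniformly in the controls. This relies on three facts:
\begin{itemize}
\item Lemma~\ref{lem:oblique-distance} and~\eqref{eq:step-bound}, which give $\widetilde d_p=O(\sqrt\dt)$.
\item The use of the $C^3$ extension at exterior trial points.
\item The Remark~\ref{rem:coupled-weights} expansion, which I would verify by expanding both $\phi(x_b\pm\frac{\widetilde d}{2}\gamma)$ to second order: the $\gamma^\top D^2\phi\gamma$ terms cancel and the cubic remainder is $O(\widetilde d^3)$.
\end{itemize}
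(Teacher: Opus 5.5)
Your proof is correct and follows the paper's argument. Both rely on a branchwise Taylor expansion whose reflection defect is the identity of Remark~\ref{rem:coupled-weights}, followed by averaging with~\eqref{eq:cubature}, the $O(h^2)$ $\mathbb P_1$ error, the $(1+r\dt)^{-1}$ factor, and a one-line Taylor expansion at boundary nodes. The only difference is bookkeeping: you add and subtract $\phi(t_{n+1},X^{*,p})$ up front, so one weak-Euler average covers every branch and the $t_{n+1}\to t_n$ shift inside $\mathcal L$ costs $O(\dt^{3/2})$, whereas the paper expands about $(t_n,x_b^p)$ and then recentres the leftover Taylor polynomial at $x_i$ through the explicit cancellation with $\delta=q-\widetilde d\,\widetilde\gamma/2$.
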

	
	\begin{proof}
		Throughout the proof, $C>0$ denotes a generic constant independent of the
		discretization parameters.  Fix $n$, $i\in\mathcal I_h^\circ$, $a\in A$,
		$b\in B$, and put
		\[
		\mu=\mu(t_n,x_i,a),\quad \sigma=\sigma(t_n,x_i,a),\quad
		r=r(t_n,x_i,a),\quad f=f(t_n,x_i,a).
		\]
		
		\medskip
		\noindent\textbf{Step 1: A single random-walk branch.}
		Fix $p\in\{1,\ldots,P\}$ and write
		\[
		X^*:=x_i+\Delta t\,\mu+\sqrt{\Delta t}\,\sigma\xi^p.
		\]
		
		\smallskip
		\noindent\emph{Case A: $X^*\in\overline\cO$.}
		Here $\widetilde d=0$ and $\widetilde X=X^*$.  Taylor expansion about
		$(t_n,x_i)$ yields
		\begin{align}
			\phi(t_{n+1},\widetilde X)
			={}&\phi_{n,i}+\Delta t\,\partial_t\phi_{n,i}
			+\Delta t\,D\phi_{n,i}\cdot\mu
			+\sqrt{\Delta t}\,D\phi_{n,i}\cdot\sigma\xi^p
			\notag\\
			&+\frac{\Delta t}{2}(\sigma\xi^p)^\top
			D^2\phi_{n,i}(\sigma\xi^p)+O(\Delta t^{3/2}).
			\label{eq:cons-caseA}
		\end{align}
		
		\smallskip
		\noindent\emph{Case B: $X^*\notin\overline\cO$.}
		Let $x_b^p=p^{\gamma_b}(X^*)$, set
		$\widetilde\gamma=\gamma(x_b^p,b)$ and
		$\widetilde d=2d^{\gamma_b}(X^*)$.  Then
		\begin{equation}
			\widetilde X=x_b^p-\frac{\widetilde d}{2}\widetilde\gamma,
			\qquad
			X^*=x_b^p+\frac{\widetilde d}{2}\widetilde\gamma,
			\qquad \widetilde d=O(\sqrt{\Delta t}).
			\label{eq:cons-X-tilde}
		\end{equation}
		Write $\widetilde k=k(t_n,x_b^p,b)$ and
		$\widetilde g=g(t_n,x_b^p,b)$.  Taylor expansion about $(t_n,x_b^p)$ gives
		\begin{align}
			\phi(t_{n+1},\widetilde X)
			={}&\phi(t_n,x_b^p)+\Delta t\,\partial_t\phi(t_n,x_b^p)
			-\frac{\widetilde d}{2}D\phi(t_n,x_b^p)\cdot\widetilde\gamma
			\notag\\
			&+\frac{\widetilde d^2}{8}\widetilde\gamma^\top
			D^2\phi(t_n,x_b^p)\widetilde\gamma+O(\Delta t^{3/2}),
			\label{eq:cons-xb-expand}
		\end{align}
		and
		\begin{equation}
			e^{-\widetilde k\widetilde d}
			=1-\widetilde k\widetilde d+\frac12\widetilde k^2\widetilde d^2
			+O(\widetilde d^3),\qquad
			\widetilde d e^{-\widetilde k\widetilde d/2}
			=\widetilde d-\frac12\widetilde k\widetilde d^2+O(\widetilde d^3).
			\label{eq:cons-exp-expand}
		\end{equation}
		Substitution and collection of powers of $\widetilde d$ yield
		\begin{align}
			&e^{-\widetilde k\widetilde d}\phi(t_{n+1},\widetilde X)
			+\widetilde d e^{-\widetilde k\widetilde d/2}\widetilde g
			\notag\\
			&\quad=\phi(t_n,x_b^p)+\Delta t\,\partial_t\phi(t_n,x_b^p)
			+\widetilde d\left(\widetilde g-\widetilde k\phi(t_n,x_b^p)
			-\frac12D\phi(t_n,x_b^p)\cdot\widetilde\gamma\right)
			\notag\\
			&\qquad+\widetilde d^2\left(
			\frac12\widetilde k^2\phi(t_n,x_b^p)-\frac12\widetilde k\widetilde g
			+\frac12\widetilde kD\phi(t_n,x_b^p)\cdot\widetilde\gamma
			+\frac18\widetilde\gamma^\top D^2\phi(t_n,x_b^p)\widetilde\gamma
			\right)
			+O(\Delta t^{3/2}).
			\label{eq:cons-combined}
		\end{align}
		Adding and subtracting the Taylor terms that move from
		$x_b^p-\widetilde d\widetilde\gamma/2$ to
		$x_b^p+\widetilde d\widetilde\gamma/2$ gives
		\begin{align}
			&e^{-\widetilde k\widetilde d}\phi(t_{n+1},\widetilde X)
			+\widetilde d e^{-\widetilde k\widetilde d/2}\widetilde g
			\notag\\
			&\quad=\Bigl[\phi(t_n,x_b^p)+\Delta t\,\partial_t\phi(t_n,x_b^p)
			+\frac{\widetilde d}{2}D\phi(t_n,x_b^p)\cdot\widetilde\gamma
			+\frac{\widetilde d^2}{8}\widetilde\gamma^\top
			D^2\phi(t_n,x_b^p)\widetilde\gamma\Bigr]
			\notag\\
			&\qquad-\widetilde d\,
			\mathcal L(t_n,x_b^p,D\phi(t_n,x_b^p),\phi(t_n,x_b^p),b)
			\notag\\
			&\qquad+\widetilde d^2\,
			\mathcal R(t_n,x_b^p,D\phi(t_n,x_b^p),\phi(t_n,x_b^p),b)
			+O(\Delta t^{3/2}).
			\label{eq:cons-extract}
		\end{align}
		
		It remains to express the bracketed Taylor polynomial at the starting point.
		From~\eqref{eq:cons-X-tilde},
		\begin{equation}
			x_b^p-x_i=\Delta t\,\mu+\sqrt{\Delta t}\,\sigma\xi^p
			-\frac{\widetilde d}{2}\widetilde\gamma.
			\label{eq:xb-xi-relation}
		\end{equation}
		Set $\delta=x_b^p-x_i$.  Taylor expansion around $x_i$ gives
		\begin{align*}
			\phi(t_n,x_b^p)
			&=\phi_{n,i}+D\phi_{n,i}\cdot\delta
			+\frac12\delta^\top D^2\phi_{n,i}\delta+O(|\delta|^3),\\
			\Delta t\,\partial_t\phi(t_n,x_b^p)
			&=\Delta t\,\partial_t\phi_{n,i}+O(\Delta t|\delta|),\\
			\frac{\widetilde d}{2}D\phi(t_n,x_b^p)\cdot\widetilde\gamma
			&=\frac{\widetilde d}{2}D\phi_{n,i}\cdot\widetilde\gamma
			+\frac{\widetilde d}{2}(D^2\phi_{n,i}\delta)\cdot\widetilde\gamma
			+O(\widetilde d|\delta|^2),\\
			\frac{\widetilde d^2}{8}\widetilde\gamma^\top
			D^2\phi(t_n,x_b^p)\widetilde\gamma
			&=\frac{\widetilde d^2}{8}\widetilde\gamma^\top
			D^2\phi_{n,i}\widetilde\gamma+O(\widetilde d^2|\delta|).
		\end{align*}
		To see the cancellation explicitly, put
		$q=\Delta t\,\mu+\sqrt{\Delta t}\,\sigma\xi^p$, so that
		$\delta=q-\widetilde d\widetilde\gamma/2$.  Then
		\[
		D\phi_{n,i}\cdot\delta
		+\frac{\widetilde d}{2}D\phi_{n,i}\cdot\widetilde\gamma
		=D\phi_{n,i}\cdot q,
		\]
		and, with $H=D^2\phi_{n,i}$,
		\[
		\frac12\delta^\top H\delta
		+\frac{\widetilde d}{2}(H\delta)\cdot\widetilde\gamma
		+\frac{\widetilde d^2}{8}\widetilde\gamma^\top H\widetilde\gamma
		=\frac12q^\top Hq.
		\]
		The drift--drift part of $q^\top Hq$ is $O(\Delta t^2)$ and its mixed
		drift--diffusion part is $O(\Delta t^{3/2})$.  We therefore obtain
		\begin{align}
			&e^{-\widetilde k\widetilde d}\phi(t_{n+1},\widetilde X)
			+\widetilde d e^{-\widetilde k\widetilde d/2}\widetilde g
			\notag\\
			&\quad=\phi_{n,i}+\Delta t\,\partial_t\phi_{n,i}
			+\Delta t\,D\phi_{n,i}\cdot\mu
			+\sqrt{\Delta t}\,D\phi_{n,i}\cdot\sigma\xi^p
			\notag\\
			&\qquad+\frac{\Delta t}{2}(\sigma\xi^p)^\top
			D^2\phi_{n,i}(\sigma\xi^p)
			\notag\\
			&\qquad-\widetilde d_{n,i}^p(a,b)
			\widetilde{\mathcal L}_{n,i}^p
			\bigl(D\phi(t_n,x_b^p),\phi(t_n,x_b^p),a,b\bigr)
			\notag\\
			&\qquad+\bigl(\widetilde d_{n,i}^p(a,b)\bigr)^2
			\widetilde{\mathcal R}_{n,i}^p
			\bigl(D\phi(t_n,x_b^p),\phi(t_n,x_b^p),a,b\bigr)
			+O(\Delta t^{3/2}).
			\label{eq:cons-caseB-final}
		\end{align}
		For a non-exiting branch, the last two lines vanish by definition and
		\eqref{eq:cons-caseB-final} reduces to~\eqref{eq:cons-caseA}.  Thus
		\eqref{eq:cons-caseB-final} is valid for every branch.
		
		\medskip
		\noindent\textbf{Step 2: Average the $P$ equally probable branches.}
		Using
		\[
		\frac1P\sum_{p=1}^P\xi^p=0,
		\qquad
		\frac1P\sum_{p=1}^P\xi^p(\xi^p)^\top=I_m,
		\]
		the linear stochastic term vanishes and the quadratic stochastic term
		becomes a trace.  Therefore
		\begin{align}
			&\frac1P\sum_{p=1}^P\Bigl[
			e^{-\widetilde k_{n,i}^p\widetilde d_{n,i}^p}
			\phi(t_{n+1},\widetilde X_{n,i}^p)
			+\widetilde d_{n,i}^p
			e^{-\widetilde k_{n,i}^p\widetilde d_{n,i}^p/2}
			\widetilde g_{n,i}^p\Bigr]
			\notag\\
			&\quad=\phi_{n,i}+\Delta t\,\partial_t\phi_{n,i}
			+\Delta t\,\mu\cdot D\phi_{n,i}
			+\frac{\Delta t}{2}\Tr(\sigma\sigma^\top D^2\phi_{n,i})
			\notag\\
			&\qquad-\frac1P\sum_{p=1}^P\widetilde d_{n,i}^p
			\widetilde{\mathcal L}_{n,i}^p
			\bigl(D\phi(t_n,x_b^p),\phi(t_n,x_b^p),a,b\bigr)
			\notag\\
			&\qquad+\frac1P\sum_{p=1}^P(\widetilde d_{n,i}^p)^2
			\widetilde{\mathcal R}_{n,i}^p
			\bigl(D\phi(t_n,x_b^p),\phi(t_n,x_b^p),a,b\bigr)
			+O(\Delta t^{3/2}),
			\label{eq:cons-averaged}
		\end{align}
		where the common arguments $(a,b)$ in the branch quantities have only been
		suppressed to shorten the display.
		
		\medskip
		\noindent\textbf{Step 3: Add the ordinary-time discount and the
			$\mathbb P_1$ interpolation error.}
		The interpolation estimate gives, uniformly in the branches,
		\[
		I_h[\phi(t_{n+1},\cdot)|_{\cG_h}](\widetilde X_{n,i}^p)
		=\phi(t_{n+1},\widetilde X_{n,i}^p)+O(h^2),
		\]
		and
		\[
		\frac1{1+r\Delta t}=1-r\Delta t+O(\Delta t^2).
		\]
		Substitution of~\eqref{eq:cons-averaged} into
		\eqref{eq:fixed-operator}, followed by addition of $f\Delta t$ and
		subtraction of $\phi_{n,i}$, gives~\eqref{eq:reflected-expansion-fixed}.
		Indeed, the terms containing $\partial_t\phi$, $\mu$, $\sigma$, $r$, and
		$f$ combine exactly into $-\Delta t[-\partial_t\phi+\mathcal H]$.
		The product of $r\Delta t$ with the boundary sum is
		$O(\Delta t^{3/2})$ because
		$\widetilde d_{n,i}^p=O(\sqrt{\Delta t})$.
		All remainders are uniform on the compact control sets.  Taking the infimum
		over $(a,b)$ and using
		\(
		|\inf F_{a,b}-\inf G_{a,b}|\le\sup|F_{a,b}-G_{a,b}|
		\)
		and $\inf(-F)=-\sup F$ proves
		\eqref{eq:reflected-expansion-controlled}.
		
		\medskip
		\noindent\textbf{Step 4: Boundary nodes.}
		Fix $i\in\mathcal I_h^\partial$ and $b\in B$, and set
		\[
		y_{i,b}=x_i-\ell_h\gamma(x_i,b),\qquad
		\phi_n=\phi(t_n,\cdot)|_{\cG_h}.
		\]
		The $\mathbb P_1$ interpolation estimate and Taylor's formula at $x_i$
		give
		\[
		I_h[\phi_n](y_{i,b})
		=\phi_{n,i}-\ell_h\gamma(x_i,b)\cdot D\phi_{n,i}
		+O(\ell_h^2+h^2).
		\]
		Substitution into~\eqref{eq:fixed-boundary-map} yields, uniformly in $b$,
		\begin{align}
			\cS_{n,i}^{\partial,b}[\phi_n]-\phi_{n,i}
			&=-\frac{\ell_h}{1+\ell_h k(t_n,x_i,b)}
			\mathcal L(t_n,x_i,D\phi_{n,i},\phi_{n,i},b)
			+O(\ell_h^2+h^2)
			\notag\\
			&=-\ell_h\mathcal L(t_n,x_i,D\phi_{n,i},\phi_{n,i},b)+O(h^2),
			\label{eq:boundary-fixed-consistency}
		\end{align}
		where the second equality uses $\ell_h\asymp h$ and boundedness of $k$ and
		$\mathcal L$.  Taking the infimum over $b$ and using uniformity of the
		remainder proves~\eqref{eq:boundary-controlled-consistency}.
	\end{proof}
	
	\subsection{Stability of the parabolic scheme}
	
	\begin{proposition}[Parabolic stability]\label{prop:stability}
		Under Assumptions~\ref{ass:H1}--\ref{ass:H4}, assume
		\begin{equation}
			\Delta t,h\to0,
			\qquad \frac{h^2}{\Delta t}\to0.
			\label{eq:admissible-refinement}
		\end{equation}
		There is a constant $C$, independent of the discretization parameters, such
		that
		\[
		\max_{0\le n\le N_T}\|U^n\|_\infty\le C
		\]
		for all sufficiently fine grids.
	\end{proposition}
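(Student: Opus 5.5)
The plan is a barrier (discrete comparison) argument. A crude sup-norm recursion is not enough. The interior update~\eqref{eq:fixed-operator} has continuation mass at most one, but its boundary-source term $\frac1P\sum_p\widetilde d_{n,i}^p e^{-\widetilde k\widetilde d/2}\widetilde g$ can be of size $O(\sqrt{\dt})$ at every step for nodes near $\partial\cO$. Summing over $N_T=T/\dt$ steps would give a bound $O(\dt^{-1/2})$, which blows up. The remedy is to let a smooth strict supersolution of~\eqref{eq:HJB-main} absorb these local-time contributions, using the expansion of Proposition~\ref{prop:nodewise-consistency}.

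\emph{Construction of the barrier.} Let $\varrho\in C^3$ agree with $\dist(\cdot,\partial\cO)$ near $\partial\cO$, as in the proof of Lemma~\ref{lem:uniform-interior-weight}, extended as a bounded $C^3$ function on a fixed neighbourhood of $\overline\cO$. Since $D\varrho=-n$ on $\partial\cO$ and $\gamma\cdot n\ge\nu$, the function $\Phi:=-M_1\varrho/\nu$ satisfies $\gamma(x,b)\cdot D\Phi\ge M_1$ on $\partial\cO$. Set
\[
w(t,x)=e^{\lambda(T-t)}\bigl(C_0+\Phi(x)\bigr),
\]
with $C_0$ large enough that $C_0+\Phi\ge\max\{1,\|\Psi\|_\infty\}$. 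Choose $M_1\ge\|g\|_\infty+1$. Because $w\ge0$ and $k\ge0$, this gives $\mathcal L(t,x,Dw,w,b)\ge1$ uniformly on $\partial\cO\times B$. The choice of $M_1$ does not depend on $\lambda$, because the factor $e^{\lambda(T-t)}\ge1$ only enlarges $\gamma\cdot Dw$. Next, take $\lambda$ large. Since $-\partial_tw=\lambda w$, $r\ge0$ and $w\ge1$, and the derivatives of $\Phi$ are bounded, we obtain $[-\partial_tw+\mathcal H](\cdot,a)\ge1$ uniformly in $a$. This is done on the whole fixed neighbourhood, so that the branch points $x_b^p$ are covered as well.

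\emph{Discrete supersolution inequalities.} Apply~\eqref{eq:reflected-expansion-fixed} with $\phi=w$. The interior term contributes $-\dt\cdot(\ge1)$. Each exiting branch contributes
\[
-\widetilde d\,\widetilde{\mathcal L}+\widetilde d^2\widetilde{\mathcal R}
=-\widetilde d\,\widetilde{\mathcal L}\bigl(1-\tfrac12\widetilde k\widetilde d\bigr),
\]
using~\eqref{eq:R-equals-kL}. Since $\widetilde d=O(\sqrt\dt)$ by~\eqref{eq:step-bound} and $\widetilde{\mathcal L}\ge1$, this is $\le0$ for small $\dt$. Hence
\[
\cS_{n,i}^{a,b}[w(t_{n+1})]-w(t_n,x_i)\le-\dt+C(\dt^{3/2}+h^2),
\]
and the right-hand side is $<0$ for fine grids precisely because $h^2/\dt\to0$. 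This is the only place the refinement condition~\eqref{eq:admissible-refinement} enters. At boundary nodes,~\eqref{eq:boundary-fixed-consistency} gives $\cS^{\partial,b}_{n,i}[w(t_n)]-w(t_n,x_i)\le-c_0h+Ch^2<0$. Taking infima gives $S_{n,i}[w^{n+1}]\le w^n_i$ and $S^\partial_{n,i}[w^n]\le w^n_i$.

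\emph{Induction and the main obstacle.} Proceed backward by induction using Proposition~\ref{prop:monotone}. At the terminal level $U^{N_T}=\Psi\le w^{N_T}$. Suppose $U^{n+1}\le w^{n+1}$. For interior nodes, $U^n_i=S_{n,i}[U^{n+1}]\le S_{n,i}[w^{n+1}]\le w^n_i$. The boundary step is the delicate point, because the boundary values solve an implicit same-level fixed point rather than an explicit update. Here I use the contraction $T_n$ with the interior values frozen at $U^n$. Since $U^n\le w^n$ on interior nodes, monotonicity gives $T_n(w^n|_\partial)\le S^\partial[w^n]\le w^n|_\partial$. Starting the contraction iteration from $w^n|_\partial$ therefore produces a nonincreasing sequence, so its limit, the unique fixed point $U^n|_\partial$, satisfies $U^n|_\partial\le w^n|_\partial$. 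The symmetric barrier $-w$ uses $-M_1\ge\|g\|_\infty+1$ and $-\|f\|$ in reversed form. It is checked in exactly the same way, because the properness signs $r,k\ge0$ act favourably on $-w\le0$. This yields $|U^n_i|\le\|w\|_\infty=:C$.

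The main obstacle I expect is verifying $\widetilde{\mathcal L}\ge1$ uniformly at the projected points $x_b^p$ and keeping every remainder in~\eqref{eq:reflected-expansion-fixed} uniform in $(a,b)$ for this specific $w$. This is why $w$ must be defined and $C^3$ on a fixed neighbourhood of $\overline\cO$, and why $C_0$, $M_1$ and $\lambda$ are fixed before the grid is refined.
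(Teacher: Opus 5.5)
Your proposal is correct and is essentially the paper's proof. Both arguments use a smooth strict supersolution built from the distance function with $\gamma\cdot D(\cdot)\ge1$ on $\partial\cO$ (you use a large rate $e^{\lambda(T-t)}$, where the paper uses $e^{T-t}(M_0+A_0\chi)$ with a large additive constant $M_0$), the identity $\mathcal R=\tfrac12 k\,\mathcal L$ to give the reflected terms a fixed sign, $h^2/\Delta t\to0$ to absorb the interpolation remainder into the $\Delta t$ margin, and monotone iteration of the boundary contraction started from the barrier. The only slip is the lower-barrier phrase ``$-M_1\ge\|g\|_\infty+1$''; it should read $\mathcal L(t,x,D(-w),-w,b)\le -M_1+\|g\|_\infty\le-1$, which uses $k\ge0$ and $w\ge0$.
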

	\begin{proof}
		Set
		\[
		F_0=\|f\|_\infty,\qquad G_0=\|g\|_\infty,\qquad A_0=G_0+1.
		\]
		
		\medskip
		\noindent\textbf{Step 1: Auxiliary function.}
		By uniform obliqueness,
		\[
		\nu:=\min_{(x,b)\in\partial\cO\times B}n(x)\cdot\gamma(x,b)>0.
		\]
		Let $d_\partial(x)=\dist(x,\partial\cO)$ in $\cO$ and write
		$L_\rho:=\{x\in\overline\cO:d_\partial(x)<\rho\}$.  Choose a tubular
		radius $\eta>0$ on which $d_\partial$ is $C^3$ and
		$Dd_\partial=-n$ on $\partial\cO$, put $\kappa=1/\nu$, and choose
		$\varepsilon_0\le\min\{\eta/2,1/(2\kappa)\}$.  On $L_{2\varepsilon_0}$ set
		$\chi_0=1-\kappa d_\partial$.  Let $\zeta$ be a smooth cutoff equal to one
		on $L_{\varepsilon_0}$ and zero outside $L_{2\varepsilon_0}$, and extend
		$\widetilde\chi:=\zeta\chi_0$ by zero to the rest of $\overline\cO$.
		Set $\chi:=\widetilde\chi+\|\widetilde\chi^-\|_\infty$.  Then
		\begin{equation}
			\chi\in C^3(\overline\cO),\qquad \chi\ge0,\qquad
			\gamma(x,b)\cdot D\chi(x)\ge1
			\quad (x,b)\in\partial\cO\times B.
			\label{eq:chi}
		\end{equation}
		The cutoff can be chosen flat where it vanishes; hence $\chi$ has a bounded
		$C^3$ extension to a fixed neighbourhood of $\overline\cO$.
		
		\medskip
		\noindent\textbf{Step 2: Upper barrier.}
		For a constant $M_0>0$ to be fixed below, define
		\[
		\psi(t,x)=e^{T-t}(M_0+A_0\chi(x)).
		\]
		Set
		\[
		C_1:=\sup_{(t,x,a)\in[0,T]\times\overline\cO\times A}
		\left|\frac12\Tr(\sigma\sigma^\top D^2\chi)
		+\mu\cdot D\chi\right|<\infty.
		\]
		Since $r,k\ge0$ and $e^{T-t}\ge1$, direct substitution gives, uniformly
		in $a\in A$ and $b\in B$,
		\begin{align}
			[-\partial_t\psi+\mathcal H]
			(t,x,D\psi,D^2\psi,\psi,a)
			&\ge e^{T-t}M_0-e^TA_0C_1-F_0,
			\label{eq:H-psi}\\
			\mathcal L(t,x,D\psi,\psi,b)&\ge A_0-G_0=1
			\qquad (x\in\partial\cO),
			\label{eq:L-psi}
		\end{align}
		Choose
		\begin{equation}
			M_0\ge\max\{e^TA_0C_1+F_0+4,\ \|\Psi\|_\infty\}.
			\label{eq:M0}
		\end{equation}
		Then the two residuals in \eqref{eq:H-psi}--\eqref{eq:L-psi} are bounded
		below by $4$ and $1$, respectively.
		
		\medskip
		\noindent\textbf{Step 3: Consistency estimate for the barriers.}
		Apply Proposition~\ref{prop:nodewise-consistency} to the two fixed smooth
		functions $\psi$ and $-\psi$, and let $C_{\rm cons}$ be a common constant
		for their interior and boundary remainders.  By
		\eqref{eq:admissible-refinement} and $\ell_h\ge c_0h$, for all sufficiently
		fine grids,
		\begin{equation}
			C_{\rm cons}(\Delta t^{3/2}+h^2)\le\Delta t,
			\qquad C_{\rm cons}h^2\le\frac12\ell_h.
			\label{eq:barrier-remainder}
		\end{equation}
		
		On an exiting branch,
		\eqref{eq:R-equals-kL} gives
		\[
		-\widetilde d\,\widetilde{\mathcal L}
		+\widetilde d^2\widetilde{\mathcal R}
		=-\widetilde d\left(1-\frac12\widetilde k\widetilde d\right)
		\widetilde{\mathcal L}.
		\]
		The step bound~\eqref{eq:step-bound} implies
		$\widetilde d=O(\sqrt{\Delta t})$ uniformly.  Hence, for sufficiently small
		$\Delta t$,
		\(
		1-\widetilde k\widetilde d/2\ge1/2
		\)
		on every branch.  Part~\textup{(I)} of
		Proposition~\ref{prop:nodewise-consistency} can therefore be used with a
		fixed sign for every reflected boundary contribution.
		
		\medskip
		\noindent\textbf{Step 4: The upper barrier is a discrete supersolution.}
		Fix $i\in\mathcal I_h^\circ$ and $(a,b)\in A\times B$.  Part~\textup{(I)}
		of Proposition~\ref{prop:nodewise-consistency} gives
		\begin{align}
			&\cS_{n,i}^{a,b}[\psi(t_{n+1},\cdot)]-\psi(t_n,x_i)
			\notag\\
			&\quad=-\Delta t\,
			[-\partial_t\psi+\mathcal H](t_n,x_i,D\psi,D^2\psi,\psi,a)
			\notag\\
			&\qquad\quad+\frac1P\sum_{p=1}^P\left\{
			-\widetilde d_{n,i}^p\,
			\widetilde{\mathcal L}_{n,i}^p[\psi](a,b)
			+(\widetilde d_{n,i}^p)^2
			\widetilde{\mathcal R}_{n,i}^p[\psi](a,b)\right\}
			+\rho_{n,i}^{a,b},
			\label{eq:upper-fixed-expansion}
		\end{align}
		Here $\widetilde{\mathcal L}_{n,i}^p[\psi](a,b)$ and
		$\widetilde{\mathcal R}_{n,i}^p[\psi](a,b)$ denote the two truncated
		operators in Proposition~\ref{prop:nodewise-consistency} evaluated at
		$\psi$; they are zero on a non-exiting branch.  Moreover,
		$|\rho_{n,i}^{a,b}|\le C_{\rm cons}(\Delta t^{3/2}+h^2)\le\Delta t$.
		The first term on the right of~\eqref{eq:upper-fixed-expansion} is at most
		$-4\Delta t$.  A non-exiting branch has $\widetilde d_{n,i}^p=0$.
		On an exiting branch, \eqref{eq:R-equals-kL}, \eqref{eq:L-psi}, and
		$1-\widetilde k_{n,i}^p\widetilde d_{n,i}^p/2\ge1/2$ give
		\[
		-\widetilde d_{n,i}^p\widetilde{\mathcal L}_{n,i}^p[\psi]
		+(\widetilde d_{n,i}^p)^2\widetilde{\mathcal R}_{n,i}^p[\psi]
		=-\widetilde d_{n,i}^p
		\left(1-\frac12\widetilde k_{n,i}^p\widetilde d_{n,i}^p\right)
		\widetilde{\mathcal L}_{n,i}^p[\psi]
		\le-\frac12\widetilde d_{n,i}^p\le0.
		\]
		Consequently, for every fixed $(a,b)$,
		\[
		\cS_{n,i}^{a,b}[\psi(t_{n+1},\cdot)]-\psi(t_n,x_i)
		\le-4\Delta t-\frac1{2P}\sum_{p:\,\mathrm{exit}}
		\widetilde d_{n,i}^p+\Delta t
		\le-3\Delta t.
		\]
		Taking the infimum over $(a,b)$ proves
		\begin{equation}
			S_{n,i}[\psi(t_{n+1},\cdot)]-\psi(t_n,x_i)\le-3\Delta t<0.
			\label{eq:S-psi-interior}
		\end{equation}
		
		Now fix $i\in\mathcal I_h^\partial$ and $b\in B$.  The fixed-control
		boundary expansion~\eqref{eq:boundary-fixed-consistency} gives
		\[
		\cS_{n,i}^{\partial,b}[\psi(t_n,\cdot)]-\psi(t_n,x_i)
		=-\ell_h\mathcal L(t_n,x_i,D\psi,\psi,b)
		+\rho_{n,i}^{\partial,b},
		\qquad |\rho_{n,i}^{\partial,b}|\le C_{\rm cons}h^2.
		\]
		Since \eqref{eq:L-psi} holds for every $b$,
		\[
		\cS_{n,i}^{\partial,b}[\psi(t_n,\cdot)]-\psi(t_n,x_i)
		\le-\ell_h+C_{\rm cons}h^2\le-\frac12\ell_h.
		\]
		Taking the infimum over $b$ proves
		\begin{equation}
			S_{n,i}^{\partial}[\psi(t_n,\cdot)]-\psi(t_n,x_i)
			\le-\frac12\ell_h<0.
			\label{eq:S-psi-boundary}
		\end{equation}
		Thus \eqref{eq:S-psi-interior} and \eqref{eq:S-psi-boundary} verify the
		upper barrier separately on the interior and boundary node sets.  Notice
		that the boundary estimate uses only $\ell_h\asymp h$ and no time-step
		condition.
		
		\medskip
		\noindent\textbf{Step 5: Lower barrier.}
		Define
		\[
		\underline\psi(t,x):=-e^{T-t}(M_0+A_0\chi(x))=-\psi(t,x).
		\]
		Direct substitution, exactly as in Step~2, gives
		\begin{align}
			[-\partial_t\underline\psi+\mathcal H]
			(t,x,D\underline\psi,D^2\underline\psi,\underline\psi,a)&\le-4,
			\label{eq:H-lower-psi}\\
			\mathcal L(t,x,D\underline\psi,\underline\psi,b)&\le-1
			\qquad(x\in\partial\cO).
			\label{eq:L-lower-psi}
		\end{align}
		Fix an interior node and $(a,b)$.  Substitution of $\underline\psi$ into
		the fixed-control expansion~\eqref{eq:upper-fixed-expansion}, with a new
		remainder satisfying the same absolute bound, gives a volume contribution
		at least $4\Delta t$.  On every exiting branch,
		\[
		-\widetilde d\left(1-\frac12\widetilde k\widetilde d\right)
		\widetilde{\mathcal L}[\underline\psi]
		\ge\frac12\widetilde d,
		\]
		because $\widetilde{\mathcal L}[\underline\psi]\le-1$.  Hence, for every
		fixed $(a,b)$,
		\[
		\cS_{n,i}^{a,b}[\underline\psi(t_{n+1},\cdot)]
		-\underline\psi(t_n,x_i)
		\ge4\Delta t+\frac1{2P}\sum_{p:\,\mathrm{exit}}
		\widetilde d_{n,i}^p-\Delta t
		\ge3\Delta t.
		\]
		Because this lower bound holds for every control pair, taking the infimum
		preserves it and yields
		\begin{equation}
			S_{n,i}[\underline\psi(t_{n+1},\cdot)]
			-\underline\psi(t_n,x_i)\ge3\Delta t>0,
			\qquad i\in\mathcal I_h^\circ.
			\label{eq:S-lower-psi-interior}
		\end{equation}
		At a boundary node, the fixed-control boundary expansion and
		\eqref{eq:L-lower-psi} give, for every $b$,
		\[
		\cS_{n,i}^{\partial,b}[\underline\psi(t_n,\cdot)]
		-\underline\psi(t_n,x_i)
		\ge\ell_h-C_{\rm cons}h^2\ge\frac12\ell_h.
		\]
		Taking the infimum over $b$ therefore gives
		\begin{equation}
			S_{n,i}^{\partial}[\underline\psi(t_n,\cdot)]
			-\underline\psi(t_n,x_i)
			\ge\frac12\ell_h>0.
			\label{eq:S-lower-psi-boundary}
		\end{equation}
		
		\medskip
		\noindent\textbf{Step 6: Comparison and the $L^\infty$ estimate.}
		At $t=T$, the choice of $M_0$ gives
		\[
		\underline\psi(T,x_i)\le U_i^{N_T}\le\psi(T,x_i)
		\qquad(i\in\mathcal I_h).
		\]
		Assume this order at level $n+1$.  Interior monotonicity together with
		\eqref{eq:S-psi-interior}--\eqref{eq:S-lower-psi-interior} first yields
		\[
		\underline\psi(t_n,x_i)\le U_i^n\le\psi(t_n,x_i)
		\qquad(i\in\mathcal I_h^\circ).
		\]
		For the same time layer, write $T_n^q$ for the boundary map with interior
		vector $q$.  It is order preserving in both arguments and is a contraction.
		The just-proved interior bounds and
		\eqref{eq:S-psi-boundary}--\eqref{eq:S-lower-psi-boundary} give
		\[
		T_n^{U_\circ^n}(\psi_\partial^n)
		\le T_n^{\psi_\circ^n}(\psi_\partial^n)\le\psi_\partial^n,
		\qquad
		T_n^{U_\circ^n}(\underline\psi_\partial^n)
		\ge T_n^{\underline\psi_\circ^n}(\underline\psi_\partial^n)
		\ge\underline\psi_\partial^n.
		\]
		Iterating the monotone contraction $T_n^{U_\circ^n}$ from the two barriers
		therefore brackets its unique fixed point between them.  Hence the same bound
		holds on $\mathcal I_h^\partial$.  Backward induction proves
		\[
		\max_{0\le n\le N_T}\|U^n\|_\infty
		\le e^T(M_0+A_0\|\chi\|_\infty),
		\]
		which is independent of $h$ and $\Delta t$.
	\end{proof}
	\begin{remark}\upshape
		Condition~\eqref{eq:admissible-refinement} is used in the barrier proof to
		absorb the $O(h^2)$ consistency remainder into the $O(\Delta t)$ volume
		margin.  It is a refinement condition for the present stability and
		convergence results, not a CFL restriction for positivity or monotonicity;
		we do not claim the stability bound for arbitrary independent choices of
		$h$ and $\Delta t$.
	\end{remark}
	\section{Convergence to the viscosity solution}
	\label{sec:convergence}
	
	For $t\in[t_n,t_{n+1})$ set
	\begin{equation}
		u^{h,\dt}(t,x)=I_h[U^n](x),
		\qquad
		u^{h,\dt}(T,x)=I_h[U^{N_T}](x).
		\label{eq:extension}
	\end{equation}
	All limits in this section are taken along families satisfying~\eqref{eq:admissible-refinement}.  Define the nodal half-relaxed limits
	\begin{align}
		\overline u(t,x)
		&=\limsup_{\substack{h,\dt\to0,\ h^2/\dt\to0\\
				(t_n,x_i)\to(t,x)}}U_i^n,
		\label{eq:upper-relaxed}\\
		\underline u(t,x)
		&=\liminf_{\substack{h,\dt\to0,\ h^2/\dt\to0\\
				(t_n,x_i)\to(t,x)}}U_i^n.
		\label{eq:lower-relaxed}
	\end{align}
	They are also the half-relaxed limits of~\eqref{eq:extension}: every interpolated value is a convex combination of nodal values at distance $O(h)$, whereas every nodal value is itself an interpolated value.  Proposition~\ref{prop:stability} makes both limits finite.
	
	\begin{theorem}[Uniform viscosity convergence]\label{thm:convergence}
		Under Assumptions~\ref{ass:H1}--\ref{ass:H4}, let~\eqref{eq:admissible-refinement} hold.  Then
		$u^{h,\dt}$ converges uniformly on $\overline\cO_T$ to the unique viscosity solution of~\eqref{eq:HJB-main}.
	\end{theorem}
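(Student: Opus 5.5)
We follow the Barles--Souganidis strategy with the nodal half-relaxed limits \eqref{eq:upper-relaxed}--\eqref{eq:lower-relaxed}. By Proposition~\ref{prop:stability}, $\overline u$ and $\underline u$ are finite; by construction $\overline u$ is upper semicontinuous, $\underline u$ is lower semicontinuous, and $\underline u\le\overline u$. The plan is to show that $\overline u$ is a viscosity subsolution and $\underline u$ a viscosity supersolution in the sense of Definition~\ref{def:viscosity}. Proposition~\ref{thm:wellposed} then gives $\overline u\le\underline u$, so $u:=\overline u=\underline u$ is continuous and is the unique viscosity solution. Uniform convergence of $u^{h,\dt}$ follows from the standard Dini-type argument for half-relaxed limits, since every interpolated value is a convex combination of nodal values at distance $O(h)$.

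\textbf{Subsolution property.} Let $\overline u-\phi$ have a strict local maximum at $(\bar t,\bar x)$, where $\phi$ is smooth and extended $C^3$ to a neighbourhood as in Proposition~\ref{prop:nodewise-consistency}. Choose grid maximizers $(t_n,x_i)$ of $U_i^n-\phi(t_n,x_i)$ over a closed neighbourhood, along a subsequence with $(t_n,x_i)\to(\bar t,\bar x)$ and $U_i^n\to\overline u(\bar t,\bar x)$. Set $\xi_{h}:=U_i^n-\phi_{n,i}\to\overline u-\phi$ at the limit point, so $U\le\phi+\xi_h$ locally. The steps stay local because the branch step is $O(\sqrt{\dt})$ and the boundary offset is $O(h)$. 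Monotonicity and the shift bound \eqref{eq:shift-defect}, together with $\xi_h$ bounded, then give the inequalities below.

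\emph{Interior node.} We get $\phi_{n,i}+\xi_h\le S_{n,i}[\phi(t_{n+1})+\xi_h]\le S_{n,i}[\phi(t_{n+1})]+\xi_h+C|\xi_h|(\dt+\bar d)$. Insert \eqref{eq:reflected-expansion-controlled}. If $\bar x\in\cO$, then for small $\dt$ no branch exits, the $\widetilde{\mathcal L},\widetilde{\mathcal R}$ terms vanish, and dividing by $\dt$ gives $-\partial_t\phi+H\le0$, using continuity of $H$ and the replacement of $\phi$ by $\overline u$ in the zero-order term via $\xi_h\to\overline u-\phi$.

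\emph{Boundary node.} Here \eqref{eq:boundary-controlled-consistency} gives $\ell_h\sup_b\mathcal L\le O(h^2)+o(\ell_h)$. Dividing by $\ell_h\asymp h$ yields $L\le0$ in the limit.

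\textbf{Main obstacle.} The main difficulty is the case $\bar x\in\partial\cO$ approached by interior nodes whose branches exit. We must show that either $-\partial_t\phi+H\le0$ or $L\le0$. Suppose $L(\bar t,\bar x,D\phi,\overline u)>0$. By continuity, $\widetilde{\mathcal L}\ge c>0$ on exiting branches near $\bar x$ uniformly in $b$, while $\widetilde{\mathcal R}=\tfrac12\widetilde k\widetilde{\mathcal L}$ by \eqref{eq:R-equals-kL}. As in the barrier proof, this gives $-\widetilde d(1-\tfrac12\widetilde k\widetilde d)\widetilde{\mathcal L}\le0$ on each branch. These boundary terms therefore have the favourable sign and can be discarded inside the supremum in \eqref{eq:reflected-expansion-controlled}. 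What remains is $\dt\,\sup_a[-\partial_t\phi+\mathcal H]\le O(\dt^{3/2}+h^2)+C|\xi_h|(\dt+\bar d)$.

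The term $|\xi_h|\bar d$ is the delicate one, since $\bar d=O(\sqrt{\dt})$ is not $o(\dt)$. I would first normalize $\phi$ by adding the constant $\overline u(\bar t,\bar x)-\phi(\bar t,\bar x)$, which makes $\xi_h\to0$. It then suffices to absorb $|\xi_h|\bar d$ into half of the favourable boundary term $\tfrac c2\bar d$, which is possible since $|\xi_h|\to0$. Dividing by $\dt$ and using $h^2/\dt\to0$ gives $-\partial_t\phi+H\le0$.

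\textbf{Remaining cases.} The terminal layer is handled by the pointwise formulation (Bourgoing's remark) together with a barrier near $t=T$ built from $\Psi$ and $\chi$, analogous to Proposition~\ref{prop:stability}. The supersolution property is symmetric: one uses $\inf$ in place of the sign reversal, and exploits that the lower bound holds uniformly in the controls.
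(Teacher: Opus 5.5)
Your overall architecture matches the paper's proof: nodal half-relaxed limits, grid maximizers of a strict test function with the normalization $\xi_h\to0$, monotonicity plus the shift defect \eqref{eq:shift-defect}, division by $\ell_h$ at boundary contact nodes, then comparison and the convex-combination argument for uniform convergence.

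\textbf{Where your route differs.} The genuine difference is at interior contact nodes approaching $\partial\cO$.
\begin{itemize}
\item The paper never assumes a sign for $L$. It divides the fixed-control inequality by $s_j=\Delta t_j+P^{-1}\sum_p d_{j,p}$. The consistency remainder then becomes $O(\sqrt{\Delta t_j}+h_j^2/\Delta t_j)$, and the shift defect becomes $O(|c_j|)$ with nothing to absorb. In the limit this gives $\lambda[-\partial_t\phi+\mathcal H]+(1-\lambda)\mathcal L\le0$ for every control pair, with $\lambda=\lim\Delta t_j/s_j$. The min-condition then follows by choosing $(a_0,b_0)$ with both residuals at least $\delta$. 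The same normalization runs verbatim for the supersolution with $\Delta t_j^2$-near-minimizers.
\item You instead argue by contradiction on the sign of $L$, discard the reflected terms, and absorb the shift error into the favourable term. This is more elementary and avoids tracking the limit weight $\lambda$ and the boundedness of the normalized weights. The cost is a case distinction and different bookkeeping in the sub and super cases.
\end{itemize}

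\textbf{Two points need repair.}
\begin{itemize}
\item \emph{No uniformity in $b$ for the subsolution case.} $L>0$ at the contact point only gives one $b_0$ with $\mathcal L(\cdot,b_0)\ge 2c$, not positivity uniformly in $b$. That uniformity is available only in the supersolution case, where $L<0$. The slip is harmless, but only if you work at the fixed-control level. Use $U_i^n=S_{n,i}[U^{n+1}]\le\cS^{a,b_0}_{n,i}[\phi+\xi_h]$ for every $a$, so the shift defect is $C|\xi_h|(\Delta t+\bar d^{a,b_0})$ with the same $\bar d^{a,b_0}$ that appears in the favourable term $-\tfrac c2\bar d^{a,b_0}$. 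At the controlled level, as you wrote it, the shift bound carries $\sup_{a,b}\bar d^{a,b}$. That supremum can be of order $\sqrt{\Delta t}$ while $\bar d^{a,b_0}=0$ for the $a$ under consideration. The absorption then fails, since $|\xi_h|\sqrt{\Delta t}/\Delta t$ need not vanish.
\item \emph{Terminal layer.} A barrier at $t_0=T$ is unnecessary. With $\Psi$ only continuous and a Robin closure to satisfy at boundary nodes up to $t_{N_T-1}$, such a barrier is also not routine, and you have not specified it. Definition~\ref{def:viscosity} requires only the relaxed min-condition, and your grid-maximizer argument already yields it. If contacts lie on $t=T$, then $\overline u(T,x_0)=\Psi(x_0)$. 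Otherwise the nonterminal cases apply, because the stencil comparison at level $T$ still holds. Bourgoing's result is needed only inside the comparison principle, not to establish a pointwise terminal inequality.
\end{itemize}
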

	
	\begin{proof}
		By the smooth-test reduction~\cite[Sections~2--3]{CrandallIshiiLions1992},
		it is enough to use test functions having a bounded $C^3$ extension to a
		fixed neighbourhood of the contact point.  Multiplication by a cutoff equal
		to one near that point does not change the argument: the interior stencil has
		diameter $O(\sqrt{\dt}+h)$ by~\eqref{eq:step-bound}, and the same-level
		boundary stencil has diameter $O(h)$.
		
		\medskip
		\noindent\textbf{Step 1: Strict maximum and recovery sequence.}
		Let $z_0=(t_0,x_0)\in\overline\cO_T$, and suppose that
		$\overline u-\phi$ has a local maximum at $z_0$.  Subtracting a constant,
		assume $(\overline u-\phi)(z_0)=0$.  For fixed $\varepsilon>0$, set
		\begin{equation}
			\phi^\varepsilon(t,x)
			:=\phi(t,x)+\varepsilon\bigl(|t-t_0|^2+|x-x_0|^4\bigr).
			\label{eq:strict-upper-test}
		\end{equation}
		Then $\overline u-\phi^\varepsilon$ has a strict local maximum $0$ at
		$z_0$, and the derivatives of $\phi^\varepsilon$ used in the equation agree
		with those of $\phi$ at $z_0$.
		
		Choose compact relative neighbourhoods
		$K'\Subset K\subset\overline\cO_T$ such that the strict maximum lies in
		$K'$ and $\overline u-\phi^\varepsilon<0$ on $K\setminus K'$.  On a
		subsequence of discretizations realizing the upper relaxed limit, let
		$\mathcal G_j^{\rm st}$ be the corresponding space--time nodal grid and
		write $U_j(t_n,x_i):=U_i^n$.  Set
		\begin{equation}
			c_j:=\max_{(t_n,x_i)\in K\cap\mathcal G_j^{\rm st}}
			\{U_i^n-\phi^\varepsilon(t_n,x_i)\},
			\qquad
			z_j=(t_{n_j},x_{i_j})
			\label{eq:upper-grid-maximum}
		\end{equation}
		at a maximizing node.  A recovery sequence
		$y_j\in K\cap\mathcal G_j^{\rm st}$ with
		$y_j\to z_0$ and $U_j(y_j)\to\overline u(z_0)$ gives
		\[
		\liminf_{j\to\infty}c_j
		\ge\lim_{j\to\infty}\bigl(U_j(y_j)-\phi^\varepsilon(y_j)\bigr)=0.
		\]
		By stability, every subsequence of $(z_j,c_j)$ has a further subsequence
		with $z_j\to z_*\in K$ and $c_j\to c\ge0$.  Since
		$U_{i_j}^{n_j}=\phi^\varepsilon(z_j)+c_j$, the definition of
		$\overline u$ gives
		\[
		(\overline u-\phi^\varepsilon)(z_*)\ge c\ge0.
		\]
		Strictness forces $z_*=z_0$ and $c=0$.  This argument applies to every
		subsequence; hence
		\begin{equation}
			z_j\to z_0,\qquad c_j\to0.
			\label{eq:contact-limit}
		\end{equation}
		For large $j$, $z_j\in K'$ and every stencil at $z_j$ is contained in
		$K$.  Therefore the maximizing property yields
		\begin{equation}
			U^{n_j+1}\le
			\phi^\varepsilon(t_{n_j+1},\cdot)|_{\cG_{h_j}}+c_j
			\quad\hbox{on every next-time stencil},
			\label{eq:max-next-stencil}
		\end{equation}
		whenever $n_j<N_{T,j}$, and
		\begin{equation}
			U^{n_j}\le
			\phi^\varepsilon(t_{n_j},\cdot)|_{\cG_{h_j}}+c_j
			\quad\hbox{on every same-time boundary stencil}.
			\label{eq:max-boundary-stencil}
		\end{equation}
		
		\medskip
		\noindent\textbf{Step 2: Monotonicity and signed constant shifts.}
		The constants $c_j$ need not have a fixed sign.  For an interior
		fixed-control operator, define
		\[
		\delta_{j}^{a,b}(c):=
		\cS_{n_j,i_j}^{a,b}[V+c]-\cS_{n_j,i_j}^{a,b}[V]-c.
		\]
		By~\eqref{eq:shift-defect}, uniformly in the controls,
		\begin{equation}
			|\delta_j^{a,b}(c)|
			\le C|c|\left(\dt_j+\frac1P\sum_{p=1}^P
			\widetilde d_{n_j,i_j}^p(a,b)\right).
			\label{eq:interior-signed-shift}
		\end{equation}
		For a boundary fixed-control row the formula is exact:
		\begin{equation}
			\cS_{n_j,i_j}^{\partial,b}[V+c]
			-\cS_{n_j,i_j}^{\partial,b}[V]-c
			=-\frac{\ell_{h_j}k(t_{n_j},x_{i_j},b)}
			{1+\ell_{h_j}k(t_{n_j},x_{i_j},b)}\,c.
			\label{eq:boundary-signed-shift}
		\end{equation}
		Thus, after division by the natural interior scale
		$\dt_j+P^{-1}\sum_p\widetilde d^p$ or by the boundary scale $\ell_{h_j}$,
		the shift errors produced by $c_j$ are $O(|c_j|)$.
		
		If $i_j\in\mathcal I_{h_j}^\circ$ and $n_j<N_{T,j}$, the scheme,
		\eqref{eq:max-next-stencil}, and monotonicity imply, for every fixed
		$(a,b)$,
		\begin{equation}
			0\le
			\cS_{n_j,i_j}^{a,b}[\phi^\varepsilon(t_{n_j+1},\cdot)]
			-\phi^\varepsilon(z_j)+\delta_j^{a,b}(c_j).
			\label{eq:upper-interior-discrete}
		\end{equation}
		If $i_j\in\mathcal I_{h_j}^\partial$, then for every fixed $b$,
		\begin{equation}
			0\le
			\cS_{n_j,i_j}^{\partial,b}[\phi^\varepsilon(t_{n_j},\cdot)]
			-\phi^\varepsilon(z_j)
			-\frac{\ell_{h_j}k(t_{n_j},x_{i_j},b)}
			{1+\ell_{h_j}k(t_{n_j},x_{i_j},b)}c_j.
			\label{eq:upper-boundary-discrete}
		\end{equation}
		Indeed, at an interior node
		$U_{i_j}^{n_j}=S_{n_j,i_j}[U^{n_j+1}]$, whereas at a boundary node
		$U_{i_j}^{n_j}=S_{n_j,i_j}^{\partial}[U^{n_j}]$; these are precisely the
		two lines of~\eqref{eq:scheme}.
		
		\medskip
		\noindent\textbf{Step 3: Interior subsolution condition.}
		Assume $t_0<T$ and $x_0\in\cO$.  For all sufficiently large $j$,
		$i_j\in\mathcal I_{h_j}^\circ$ and every branch stays in $\cO$, uniformly
		in $(a,b)$.  Fix arbitrary $(a,b)\in A\times B$.  In
		\eqref{eq:upper-interior-discrete} all $\widetilde d^p$ vanish.  Part
		\textup{(I)} of Proposition~\ref{prop:nodewise-consistency}, divided by
		$\dt_j$, therefore gives
		\[
		[-\partial_t\phi^\varepsilon+\mathcal H]
		(t_{n_j},x_{i_j},D\phi^\varepsilon,D^2\phi^\varepsilon,
		\phi^\varepsilon,a)
		\le C\left(\sqrt{\dt_j}+\frac{h_j^2}{\dt_j}+|c_j|\right).
		\]
		The right-hand side tends to zero by
		\eqref{eq:admissible-refinement} and~\eqref{eq:contact-limit}.  Passing to
		the limit and using the arbitrariness of $a$ gives
		\[
		-\partial_t\phi^\varepsilon(z_0)
		+H(t_0,x_0,D\phi^\varepsilon(z_0),D^2\phi^\varepsilon(z_0),
		\overline u(z_0))\le0.
		\]
		Letting $\varepsilon\downarrow0$ proves~\eqref{eq:sub-interior}.
		
		\medskip
		\noindent\textbf{Step 4: Boundary subsolution condition.}
		Assume $t_0<T$ and $x_0\in\partial\cO$.  Passing to a subsequence, the
		contact nodes are either all boundary nodes or all interior nodes.
		
		\smallskip
		\noindent\textit{Case A: boundary contact nodes.}
		Let $i_j\in\mathcal I_{h_j}^\partial$ and fix $b\in B$.  Divide
		\eqref{eq:upper-boundary-discrete} by $\ell_{h_j}$.  Part~\textup{(II)} of
		Proposition~\ref{prop:nodewise-consistency},
		\eqref{eq:boundary-signed-shift}, $c_j\to0$, and
		$h_j^2/\ell_{h_j}=O(h_j)$ yield
		\[
		\mathcal L(t_0,x_0,D\phi^\varepsilon(z_0),
		\overline u(z_0),b)\le0.
		\]
		This holds for every $b$, and hence
		\begin{equation}
			L(t_0,x_0,D\phi^\varepsilon(z_0),\overline u(z_0))\le0.
			\label{eq:boundary-node-sub}
		\end{equation}
		
		\smallskip
		\noindent\textit{Case B: interior contact nodes.}
		Let $i_j\in\mathcal I_{h_j}^\circ$, fix $(a,b)\in A\times B$, and abbreviate
		\[
		\begin{aligned}
			d_{j,p}&:=\widetilde d_{n_j,i_j}^p(a,b),&
			k_{j,p}&:=\widetilde k_{n_j,i_j}^p(a,b),&
			x_{j,p}^b&:=x_{n_j,i_j}^{b,p}(a),\\
			s_j&:=\dt_j+\frac1P\sum_{p=1}^P d_{j,p},&
			\lambda_j&:=\frac{\dt_j}{s_j}.&&
		\end{aligned}
		\]
		For an exiting branch set
		\[
		L_{j,p}^b:=\mathcal L(t_{n_j},x_{j,p}^b,
		D\phi^\varepsilon(t_{n_j},x_{j,p}^b),
		\phi^\varepsilon(t_{n_j},x_{j,p}^b),b).
		\]
		The value assigned to $L_{j,p}^b$ on a non-exiting branch is irrelevant
		because then $d_{j,p}=0$.  Using
		$\mathcal R=(k/2)\mathcal L$ in Part~\textup{(I)} of Proposition
		~\ref{prop:nodewise-consistency}, inequality
		\eqref{eq:upper-interior-discrete} becomes
		\begin{equation}
			\lambda_j A_j^a+\frac1{Ps_j}\sum_{p=1}^P d_{j,p}
			\left(1-\frac12k_{j,p}d_{j,p}\right)L_{j,p}^b
			\le\eta_j,
			\label{eq:sub-normalized}
		\end{equation}
		where
		\begin{align*}
			A_j^a&:=[-\partial_t\phi^\varepsilon+\mathcal H]
			(t_{n_j},x_{i_j},D\phi^\varepsilon(z_j),D^2\phi^\varepsilon(z_j),
			\phi^\varepsilon(z_j),a),\\
			|\eta_j|&\le C\left(
			\sqrt{\dt_j}+\frac{h_j^2}{\dt_j}+|c_j|\right)\longrightarrow0.
		\end{align*}
		Here $s_j\ge\dt_j$ controls the consistency remainder, and
		\eqref{eq:interior-signed-shift} controls the last term by $C|c_j|$.
		
		This includes the case in which no branch exits: then every $d_{j,p}=0$,
		$s_j=\dt_j$, and $\lambda_j=1$.  Extract a subsequence such that
		$\lambda_j\to\lambda\in[0,1]$.  Since
		$\max_p d_{j,p}=O(\sqrt{\dt_j})$,
		\begin{equation}
			\frac1{Ps_j}\sum_{p=1}^P d_{j,p}
			\left(1-\frac12k_{j,p}d_{j,p}\right)
			=1-\lambda_j+o(1).
			\label{eq:boundary-weight-limit}
		\end{equation}
		Every exiting projection $x_{j,p}^b$ tends to $x_0$.  Since $k$ is bounded
		and $\max_p d_{j,p}\to0$, the factors
		$1-k_{j,p}d_{j,p}/2$ are nonnegative for all large $j$; hence the absolute
		sum of the weights in~\eqref{eq:boundary-weight-limit} is uniformly
		bounded.  Uniform continuity of $\mathcal L$ therefore gives
		\[
		\frac1{Ps_j}\sum_{p=1}^P d_{j,p}
		\left(1-\frac12k_{j,p}d_{j,p}\right)
		\bigl[L_{j,p}^b-\mathcal L(t_0,x_0,D\phi^\varepsilon(z_0),
		\overline u(z_0),b)\bigr]\longrightarrow0.
		\]
		Passing to the limit in~\eqref{eq:sub-normalized} yields
		\begin{equation}
			\lambda[-\partial_t\phi^\varepsilon+\mathcal H]
			(t_0,x_0,D\phi^\varepsilon(z_0),D^2\phi^\varepsilon(z_0),
			\overline u(z_0),a)
			+(1-\lambda)\mathcal L(t_0,x_0,D\phi^\varepsilon(z_0),
			\overline u(z_0),b)\le0.
			\label{eq:boundary-convex-sub}
		\end{equation}
		If both $-\partial_t\phi^\varepsilon+H$ and $L$ were positive at $z_0$,
		compactness of $A$ and $B$ and continuity of the control-resolved operators
		would give fixed controls $a_0\in A$, $b_0\in B$, and $\delta>0$ such that
		both corresponding quantities are at least $\delta$.  Repeating the
		preceding subsequence extraction for $(a_0,b_0)$, the left-hand side of
		\eqref{eq:boundary-convex-sub} would be at least
		$\lambda\delta+(1-\lambda)\delta=\delta>0$, a contradiction.  Thus
		\[
		\min\{-\partial_t\phi^\varepsilon+H,L\}(z_0)\le0.
		\]
		Together with Case~A, and then letting $\varepsilon\downarrow0$, this proves
		\eqref{eq:sub-boundary}.  If boundary and interior contact nodes alternate,
		one takes an infinite subsequence of either type; there is no third case.
		
		\medskip
		\noindent\textbf{Step 5: Terminal subsolution condition.}
		Let $t_0=T$.  Repeat Step~1 with the strict test
		$\phi^\varepsilon$ and obtain $z_j=(t_{n_j},x_{i_j})\to(T,x_0)$ and
		$c_j\to0$.
		
		If $t_{n_j}=T$ along an infinite subsequence, then
		\[
		U_{i_j}^{N_{T,j}}=\Psi(x_{i_j}),\qquad
		U_{i_j}^{N_{T,j}}=\phi^\varepsilon(T,x_{i_j})+c_j.
		\]
		Passing to the limit gives
		$\overline u(T,x_0)=\phi(T,x_0)=\Psi(x_0)$, so the terminal-data member of
		the relaxed minimum is zero.
		
		Otherwise $t_{n_j}<T$ for all sufficiently large $j$.  The next time level
		may be $T$, but Steps~2--4 remain valid.  Letting $\varepsilon\downarrow0$,
		if $x_0\in\cO$ they give
		$-\phi_t+H\le0$; if $x_0\in\partial\cO$ they give
		$\min\{-\phi_t+H,L\}\le0$.  Consequently,
		\[
		\min\{-\phi_t+H,\overline u-\Psi\}(T,x_0)\le0
		\quad(x_0\in\cO),
		\]
		and
		\[
		\min\{-\phi_t+H,L,\overline u-\Psi\}(T,x_0)\le0
		\quad(x_0\in\partial\cO).
		\]
		Thus $\overline u$ is a viscosity subsolution.
		
		\medskip
		\noindent\textbf{Step 6: Supersolution inequalities.}
		Let $\underline u-\phi$ have a local minimum $0$ at
		$z_0=(t_0,x_0)$ and set
		\[
		\phi_\varepsilon(t,x)
		:=\phi(t,x)-\varepsilon\bigl(|t-t_0|^2+|x-x_0|^4\bigr).
		\]
		For brevity, denote the control-resolved residuals at $z_0$ by
		\begin{align*}
			A_\varepsilon^a
			&:=[-\partial_t\phi_\varepsilon+\mathcal H]
			(t_0,x_0,D\phi_\varepsilon(z_0),D^2\phi_\varepsilon(z_0),
			\underline u(z_0),a),\\
			L_\varepsilon^b
			&:=\mathcal L(t_0,x_0,D\phi_\varepsilon(z_0),
			\underline u(z_0),b).
		\end{align*}
		Repeating Step~1 with minima in place of maxima gives recovery nodes
		$z_j\to z_0$, constants $c_j\to0$, and the corresponding lower stencil
		comparisons.  Since both discrete updates are infima, these comparisons are
		applied below to controls chosen within $\dt_j^2$ (respectively
		$\ell_{h_j}^2$) of the relevant infimum.
		For $t_0<T$ and $x_0\in\cO$, choose controls within $\dt_j^2$ of the
		interior infimum and repeat Step~3.  After division by $\dt_j$, the remainder
		is $O(\sqrt{\dt_j}+h_j^2/\dt_j+|c_j|+\dt_j)=o(1)$, and compactness gives a
		control $a_*$ for which $A_\varepsilon^{a_*}\ge0$; hence
		$-\partial_t\phi_\varepsilon+H\ge0$.
		
		Let now $t_0<T$ and $x_0\in\partial\cO$.  A subsequence of boundary
		contact nodes is treated by Case~A with an $\ell_{h_j}^2$-minimizer of the
		boundary infimum, and gives $L_\varepsilon^{b_*}\ge0$, hence $L\ge0$.
		For interior contact nodes approaching the boundary, choose $(a_j,b_j)$
		so that the fixed-control value is within $\dt_j^2$ of the interior
		infimum, and use the same
		$s_j=\dt_j+P^{-1}\sum_p d_{j,p}$ and $\lambda_j=\dt_j/s_j$ as in Case~B.
		Since $s_j\ge\dt_j$, the normalized selection error satisfies
		$\dt_j^2/s_j\le\dt_j\to0$.  After extraction,
		$(a_j,b_j)\to(a_*,b_*)$ and $\lambda_j\to\lambda\in[0,1]$.
		The estimates in Case~B are uniform in the controls; with $d_{j,p}=0$ on
		non-exiting branches, they also cover changes in the exiting-branch set.
		The reversed Case~B inequality and~\eqref{eq:boundary-weight-limit} then give
		\[
		\lambda A_\varepsilon^{a_*}+(1-\lambda)L_\varepsilon^{b_*}\ge0,
		\qquad 0\le\lambda\le1.
		\]
		Thus at least one residual is nonnegative and
		$\max\{-\partial_t\phi_\varepsilon+H,L\}(z_0)\ge0$.  Alternating contact
		types are covered by passing to an infinite subsequence.  This implication
		also includes the endpoint cases $\lambda=0$ and $\lambda=1$.
		
		At $t_0=T$, the same construction either reaches the terminal layer and
		gives $\underline u-\Psi\ge0$, or reduces to the preceding nonterminal
		argument.  Letting $\varepsilon\downarrow0$ yields the terminal relaxed
		maximum inequalities in Definition~\ref{def:viscosity}.  Therefore
		$\underline u$ is a viscosity supersolution.
		
		\medskip
		\noindent\textbf{Step 7: Comparison and uniform convergence.}
		By definition, $\underline u\le\overline u$.  Steps~1--6 and
		Proposition~\ref{thm:wellposed} give $\overline u\le\underline u$.
		Hence $u:=\overline u=\underline u$ is continuous and is a viscosity
		solution of~\eqref{eq:HJB-main}; the same comparison principle gives
		uniqueness.  The remainder of the argument proves uniform convergence to
		this $u$.
		
		If convergence were not uniform, compactness of $\overline\cO_T$ would
		give $\varepsilon_0>0$, a subsequence, and points $z_j\to z_*$ with
		$|u^{h_j,\dt_j}(z_j)-u(z_j)|\ge\varepsilon_0$.  A positive discrepancy
		would imply $\overline u(z_*)\ge u(z_*)+\varepsilon_0$; a negative one would
		imply $\underline u(z_*)\le u(z_*)-\varepsilon_0$.  Both contradict the
		equality of the half-relaxed limits with $u$.
	\end{proof}
	
	\begin{theorem}[Smooth-solution error bound]
		\label{thm:rate}
		Under Assumptions~\ref{ass:H1}--\ref{ass:H4}, let
		$u\in C_b^3(\mathcal N_T)$ be a classical solution of
		\eqref{eq:HJB-main} on a neighbourhood
		$\mathcal N_T\supset[0,T]\times\overline\cO$, where $C_b^3$ denotes
		bounded continuous derivatives of total order at most three.  Here
		``classical solution'' means that all three relations in
		\eqref{eq:HJB-main} hold pointwise; in particular,
		$\sup_{b\in B}q_b(t,x)=0$ on $[0,T)\times\partial\cO$, with $q_b$ defined
		below.  Set
		\[
		q_b(t,x):=\mathcal L(t,x,Du(t,x),u(t,x),b),
		\qquad (t,x,b)\in[0,T]\times\partial\cO\times B,
		\]
		and suppose that
		\begin{equation}
			\sup_{t\in[0,T],\,b\in B}
			\|q_b(t,\cdot)\|_{C^{1,1}(\partial\cO)}<\infty.
			\label{eq:residual-C11}
		\end{equation}
		Here $C^{1,1}(\partial\cO)$ is understood intrinsically on the compact
		hypersurface.  Fix a finite $C^2$ atlas
		$\{(U_\alpha,\kappa_\alpha)\}_\alpha$, with
		$\kappa_\alpha:U_\alpha\subset\partial\cO\to V_\alpha\subset\R^{N-1}$,
		and set
		\[
		\|v\|_{C^{1,1}(\partial\cO)}
		:=\max_\alpha
		\|v\circ\kappa_\alpha^{-1}\|_{C^{1,1}(V_\alpha)},
		\]
		where
		\[
		\|w\|_{C^{1,1}(V)}
		:=\|w\|_{L^\infty(V)}+\|Dw\|_{L^\infty(V)}
		+\operatorname{Lip}_V(Dw).
		\]
		Different finite atlases give equivalent norms.  In particular, if
		$v\in C^{1,1}(\partial\cO)$ attains a maximum at $x\in\partial\cO$, then,
		for $y\in\partial\cO$ sufficiently close to $x$,
		\begin{equation}
			v(y)\ge v(x)-C\|v\|_{C^{1,1}(\partial\cO)}|x-y|^2,
			\label{eq:C11-boundary-maximum}
		\end{equation}
		where $C$ depends only on the fixed atlas.
		Then the numerical solution satisfies
		\begin{equation}
			\max_{0\le n\le N_T}\max_{i\in\mathcal I_h}
			|U_i^n-u(t_n,x_i)|
			\le C\left(\dt^{1/2}+\frac{h^2}{\dt}\right).
			\label{eq:full-rate}
		\end{equation}
		The two terms on the right-hand side of~\eqref{eq:full-rate} are balanced
		by $\dt\asymp h^{4/3}$, which yields $O(h^{2/3})$; the commonly used
		choice $\dt\asymp h$ yields $O(h^{1/2})$.
	\end{theorem}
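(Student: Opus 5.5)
We compare the numerical solution with the grid restriction $u_i^n:=u(t_n,x_i)$ and run a backward recursion on the error $E^n:=\max_i|U_i^n-u_i^n|$. The first step is to compute the local truncation errors.

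At interior nodes we apply Part~(I) of Proposition~\ref{prop:nodewise-consistency} with $\phi=u$ and use that $u$ solves the HJB equation, so the volume term vanishes after the supremum over $a$. This leaves
\[
S_{n,i}[u^{n+1}]-u_i^n=-\sup_{a,b}\Bigl\{\dt\,[-\partial_t u+\mathcal H]_a+\tfrac1P\sum_p \widetilde d^p\bigl(1-\tfrac12\widetilde k^p\widetilde d^p\bigr)q_b(t_n,x_b^p)\Bigr\}+O(\dt^{3/2}+h^2).
\]
At boundary nodes Part~(II) gives $S^\partial_{n,i}[u^n]-u_i^n=-\ell_h\sup_b q_b(t_n,x_i)+O(h^2)=O(h^2)$, since $\sup_bq_b=0$ on the boundary.

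The interior residual splits into three terms: the volume defect $\dt\,[-\partial_t u+\mathcal H]_a\ge 0$ (equality at the optimum), the boundary local-time term with $q_b\le0$, and the $O(\dt^{3/2}+h^2)$ remainder. The sign structure gives a one-sided bound immediately: every term inside the supremum is $\le0$ up to the remainder, so the consistency defect satisfies $S_{n,i}[u^{n+1}]-u_i^n\ge -C(\dt^{3/2}+h^2)$.

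The upper bound is the main obstacle. We need a single control pair that makes both the volume term and the boundary term small simultaneously. The natural choice is the optimal $a^*$ at $(t_n,x_i)$ together with a $b^*$ maximizing $q_b$ at a nearby boundary point $\bar x$, for instance the normal projection of $x_i$, where $q_{b^*}(t_n,\bar x)=0$. The exiting projections $x_b^p$ lie within $C\sqrt{\dt}$ of $\bar x$. The $C^{1,1}$ estimate \eqref{eq:C11-boundary-maximum} at the boundary maximum of $q_{b^*}(t_n,\cdot)$ (which is $0$ there) should then give $q_{b^*}(t_n,x_b^p)\ge -C|x_b^p-\bar x|^2\ge -C\dt$. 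Here the first-order term vanishes because $\bar x$ is a tangential maximum. We have to take care that the maximizer over $b$ sits at $\bar x$ for the fixed control $b^*$; choosing $b^*$ as the maximizer at $\bar x$ makes $q_{b^*}(t_n,\cdot)\le 0$ attain $0$ at $\bar x$, so \eqref{eq:C11-boundary-maximum} applies. The boundary term is then at most $C\dt\cdot\frac1P\sum_p\widetilde d^p=O(\dt^{3/2})$. With this choice, $|S_{n,i}[u^{n+1}]-u_i^n|\le C(\dt^{3/2}+h^2)$ at every node.

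For the recursion we write $U^{n+1}\le u^{n+1}+E^{n+1}$ and use monotonicity together with the shift property \eqref{eq:para-shift}, which needs only nonexpansiveness for $c\ge0$; the symmetric lower bound is obtained likewise. At interior nodes this gives
\[
|U_i^n-u_i^n|\le E^{n+1}+C(\dt^{3/2}+h^2).
\]
At boundary nodes we combine the same-level contraction of Lemma~\ref{lem:uniform-interior-weight} with the boundary residual $O(h^2)$. Since the boundary map has contraction factor $1-\theta$ over boundary values and weight $\ge\theta$ on interior values, the boundary error is bounded by the interior error at level $n$ plus $Ch^2/\theta$. Because $\theta$ is fixed, this contributes $O(h^2)$ per level rather than $O(h)$.

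Summing over $N_T=T/\dt$ steps gives
\[
E^0\le E^{N_T}+N_T\,C(\dt^{3/2}+h^2)\le C(\dt^{1/2}+h^2/\dt),
\]
with $E^{N_T}=0$. One subtlety needs attention: the boundary correction must not accumulate as an extra $h^2$ per level on top of the interior errors. We handle this by propagating the maximum over all nodes, so that the boundary step adds at most $Ch^2$ per level, which is absorbed into the same sum.

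The balancing statements follow by elementary choice of $\dt$: $\dt\asymp h^{4/3}$ equalizes the two terms at $O(h^{2/3})$, and $\dt\asymp h$ gives $O(h^{1/2})$. The step we expect to be delicate is the $O(\dt)$ control of the near-boundary term $\frac1P\sum_p\widetilde d^p q_{b}(x_b^p)$ via the $C^{1,1}$ maximum estimate with one fixed $b$ for all branches.
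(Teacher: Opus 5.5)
Your proposal is correct and follows the paper's proof essentially step by step. It uses the same pair $(a_*,b_*)$, with $b_*$ maximizing $q_b(t_n,\cdot)$ at a closest boundary point, and the same $C^{1,1}$ maximum estimate to get a two-sided $O(\dt^{3/2}+h^2)$ defect; it then propagates $E_{n+1}$ without amplification, proves the $\theta$-weighted boundary bound $E_n^\partial\le E_n^\circ+Ch^2/\theta$, and sums over $N_T$ levels, differing only cosmetically in that you merge the paper's $I_{1,i}^n$ and $I_{3,i}^n$ into one residual and use the shift property \eqref{eq:para-shift} instead of $|\inf F-\inf G|\le\sup|F-G|$. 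One sign slip: since $H=\sup_a\mathcal H$ and $-\partial_t u+H=0$, the volume residual satisfies $[-\partial_t u+\mathcal H]_a\le0$, not $\ge0$, and $\le0$ is the sign your subsequent argument actually uses.
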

	
	\begin{proof}
		Set
		\[
		u_i^n:=u(t_n,x_i),\qquad
		u_h^n:=(u_j^n)_{j\in\mathcal I_h},\qquad
		e_i^n:=U_i^n-u_i^n,
		\]
		and
		\[
		E_n^\circ:=\max_{i\in\mathcal I_h^\circ}|e_i^n|,\qquad
		E_n^\partial:=\max_{i\in\mathcal I_h^\partial}|e_i^n|,\qquad
		E_n:=\max\{E_n^\circ,E_n^\partial\}.
		\]
		Here and below, $C$ is independent of $h,\dt,n,i,a,$ and $b$; it depends
		only on $T$, the coefficient, domain, and mesh bounds, the norm in
		\eqref{eq:residual-C11}, and $\|u\|_{C_b^3(\mathcal N_T)}$.
		
		\medskip
		\noindent\textbf{Interior nodes.}
		For $i\in\mathcal I_h^\circ$, let $\widehat{\cS}_{n,i}^{a,b}$ be the
		right-hand side of~\eqref{eq:fixed-operator} with the interpolation value
		$I_h[V](\widetilde X_p)$ replaced by the exact value
		$u(t_{n+1},\widetilde X_p)$, and set
		$\widehat S_{n,i}:=\inf_{a\in A,b\in B}\widehat{\cS}_{n,i}^{a,b}$.
		Adding and subtracting $S_{n,i}[u_h^{n+1}]$ and $\widehat S_{n,i}$ gives
		the exact decomposition
		\begin{equation}
			e_i^n=I_{1,i}^n+I_{2,i}^n+I_{3,i}^n,
			\label{eq:three-error-parts}
		\end{equation}
		where
		\begin{align*}
			I_{1,i}^n&:=\widehat S_{n,i}-u_i^n,\\
			I_{2,i}^n&:=S_{n,i}[U^{n+1}]-S_{n,i}[u_h^{n+1}],\\
			I_{3,i}^n&:=S_{n,i}[u_h^{n+1}]-\widehat S_{n,i}.
		\end{align*}
		
		The consistency calculation before the interpolation error is introduced
		gives
		\begin{align}
			I_{1,i}^n
			=-\sup_{a\in A,\,b\in B}\Biggl\{&
			\dt A_{n,i}^a
			+\frac1P\sum_{p=1}^P\widetilde d_{n,i}^p(a,b)
			\left(1-\frac12\widetilde k_{n,i}^p(a,b)
			\widetilde d_{n,i}^p(a,b)\right)
			q_b(t_n,x_b^p)\Biggr\}
			+O(\dt^{3/2}),
			\label{eq:I1-consistency}
		\end{align}
		where
		\[
		A_{n,i}^a:=
		\bigl[-\partial_tu+\mathcal H\bigr]
		(t_n,x_i,Du_i^n,D^2u_i^n,u_i^n,a),
		\]
		and a summand in~\eqref{eq:I1-consistency} is understood to be zero on a
		non-exiting branch.  Since $u$ satisfies the two classical equations,
		\[
		\sup_{a\in A}A_{n,i}^a=0,
		\qquad \sup_{b\in B}q_b(t,x)=0
		\quad (x\in\partial\cO),
		\]
		so every term inside the supremum in~\eqref{eq:I1-consistency} is
		nonpositive for sufficiently small $\dt$.
		
		We now bound that supremum from below.  Choose
		$a_*\in\arg\max_a A_{n,i}^a$.  If no branch for $a_*$ exits, the expression
		in braces vanishes for this $a_*$.  Otherwise, let $\bar x_i$ be a closest
		point of $x_i$ on $\partial\cO$ and choose
		$b_*\in\arg\max_b q_b(t_n,\bar x_i)$.  The step bound
		\eqref{eq:step-bound} first gives
		$\dist(x_i,\partial\cO)\le|X_{n,i}^{*,p_0}(a_*)-x_i|\le C\sqrt\dt$
		for any exiting branch $p_0$.  For every exiting branch generated by
		$(a_*,b_*)$, the definition of the oblique projection,
		Lemma~\ref{lem:oblique-distance}, and the triangle inequality give
		\[
		\begin{aligned}
			|x_{n,i}^{b_*,p}(a_*)-\bar x_i|
			&\le |x_{n,i}^{b_*,p}(a_*)-X_{n,i}^{*,p}(a_*)|
			+|X_{n,i}^{*,p}(a_*)-x_i|+|x_i-\bar x_i|\\
			&=\frac12\widetilde d_{n,i}^p(a_*,b_*)
			+|X_{n,i}^{*,p}(a_*)-x_i|
			+\dist(x_i,\partial\cO)
			\le C\sqrt\dt .
		\end{aligned}
		\]
		Moreover, $q_{b_*}(t_n,\cdot)\le0$ on $\partial\cO$ and attains its maximum
		zero at $\bar x_i$.  Hence its tangential gradient vanishes there, and the
		uniform $C^{1,1}$ bound~\eqref{eq:residual-C11} and
		\eqref{eq:C11-boundary-maximum} give
		\[
		-C|x_{n,i}^{b_*,p}(a_*)-\bar x_i|^2
		\le q_{b_*}(t_n,x_{n,i}^{b_*,p}(a_*))\le0.
		\]
		Since the supremum in~\eqref{eq:I1-consistency} is at least its value at
		the single pair $(a_*,b_*)$, the preceding estimate and
		$\widetilde d_{n,i}^p\le C\sqrt\dt$ prove
		\[
		-C\dt^{3/2}\le
		\sup_{a,b}\Biggl\{\dt A_{n,i}^a
		+\frac1P\sum_{p=1}^P\widetilde d_{n,i}^p
		\left(1-\frac12\widetilde k_{n,i}^p\widetilde d_{n,i}^p\right)
		q_b(t_n,x_b^p)\Biggr\}\le0,
		\]
		and therefore
		\begin{equation}
			|I_{1,i}^n|\le C\dt^{3/2}.
			\label{eq:I1-bound}
		\end{equation}
		
		Next, $|\inf F-\inf G|\le\sup|F-G|$, positivity of $I_h$, and
		$e^{-\widetilde k\widetilde d}/(1+r\dt)\le1$ give
		\begin{align}
			|I_{2,i}^n|
			&\le\sup_{a,b}
			\frac1{1+r(t_n,x_i,a)\dt}\frac1P\sum_{p=1}^P
			e^{-\widetilde k_{n,i}^p\widetilde d_{n,i}^p}
			I_h[|e^{n+1}|](\widetilde X_{n,i}^p)
			\le E_{n+1}.
			\label{eq:I2-bound}
		\end{align}
		The same inequality and the $\mathbb P_1$ interpolation estimate yield
		\begin{align}
			|I_{3,i}^n|
			&\le\sup_{a,b}
			\frac1{1+r(t_n,x_i,a)\dt}\frac1P\sum_{p=1}^P
			e^{-\widetilde k_{n,i}^p\widetilde d_{n,i}^p}
			\left|I_h[u_h^{n+1}](\widetilde X_{n,i}^p)
			-u(t_{n+1},\widetilde X_{n,i}^p)\right|
			\le Ch^2.
			\label{eq:I3-bound}
		\end{align}
		Thus the interior-node error satisfies
		\begin{equation}
			E_n^\circ\le E_{n+1}+C(\dt^{3/2}+h^2).
			\label{eq:smooth-interior-defect}
		\end{equation}
		
		\medskip
		\noindent\textbf{Boundary nodes.}
		Part~\textup{(II)} of Proposition~\ref{prop:nodewise-consistency} and the
		classical boundary condition give
		\begin{equation}
			\left|S_{n,i}^{\partial}[u_h^n]-u_i^n\right|\le Ch^2,
			\qquad i\in\mathcal I_h^\partial.
			\label{eq:boundary-exact-defect}
		\end{equation}
		Since $U_i^n=S_{n,i}^{\partial}[U^n]$ and
		$|\inf F-\inf G|\le\sup|F-G|$, it follows that
		\begin{equation}
			|e_i^n|\le
			\sup_{b\in B}
			\frac{\displaystyle\sum_{j\in\mathcal I_h^\partial}
				\omega_{ij}^b|e_j^n|
				+\displaystyle\sum_{j\in\mathcal I_h^\circ}
				\omega_{ij}^b|e_j^n|}
			{1+\ell_hk(t_n,x_i,b)}+Ch^2.
			\label{eq:boundary-error-direct}
		\end{equation}
		For small $h$, boundedness of $k$ and~\eqref{eq:boundary-row-margin} give
		\[
		\frac{\sum_{j\in\mathcal I_h^\circ}\omega_{ij}^b}
		{1+\ell_hk(t_n,x_i,b)}\ge\frac\theta2=: \theta_0
		\qquad(i\in\mathcal I_h^\partial,\ b\in B).
		\]
		If $E_n^\partial\le E_n^\circ$, no estimate is needed.  Otherwise, take a
		boundary node at which $E_n^\partial$ is attained.  Since the two normalized
		weight sums in~\eqref{eq:boundary-error-direct} have total at most one and
		the interior sum is at least $\theta_0$, we obtain
		\[
		E_n^\partial
		\le E_n^\partial-\theta_0(E_n^\partial-E_n^\circ)+Ch^2.
		\]
		Therefore
		\begin{equation}
			E_n^\partial\le E_n^\circ+\frac{C}{\theta_0}h^2.
			\label{eq:smooth-boundary-defect}
		\end{equation}
		Combining~\eqref{eq:smooth-interior-defect} and
		\eqref{eq:smooth-boundary-defect} only at this point yields
		\[
		E_n\le E_{n+1}+C(\dt^{3/2}+h^2).
		\]
		Since $E_{N_T}=0$ and $N_T\dt=T$, backward iteration gives
		\[
		E_n\le C(N_T-n)(\dt^{3/2}+h^2)
		\le CT\left(\dt^{1/2}+\frac{h^2}{\dt}\right),
		\]
		which is~\eqref{eq:full-rate}.
	\end{proof}
\section{Numerical experiments}
\label{sec:numerics}

We report four two-dimensional tests for the split scheme~\eqref{eq:scheme}.
The MATLAB implementation precomputes the positive $\mathbb P_1$ rows and all
time-independent branch geometry, retains two time layers, and factorizes each
same-level boundary system once.  The tables contain errors and successive
observed orders; timings and algebraic diagnostics are retained in the
accompanying data files.

\paragraph{Common discretization.}
Experiments~1--3 use the same five nested fitted triangulations of the unit
disk.  Their nominal dyadic parameters are
$h_{\rm nom}=2^{-3},\ldots,2^{-7}$, with respectively
$377$, $1441$, $5633$, $22273$, and $88577$ nodes.  The realized maximum edge
length is $h_{\rm real}=1.243232\,h_{\rm nom}$ on every level; hence the
successive orders are unchanged whether $h_{\rm nom}$ or $h_{\rm real}$ is
used.  Experiment~1 uses $256$ equally spaced unit-circle controls, excluding
the zero control, in both compared implementations.  The main refinement in
Experiment~2 uses the zero control together with $256$ circle directions; its
control sensitivity uses $K=32,64,128,256$ circle directions, always with the
zero control retained.  Experiments~3--4 use the zero control together with
$64$ circle directions.  Both weak Euler branches have probability $1/2$.
Smooth-disk probes are evaluated by the same positive affine-chord
$\mathbb P_1$ map.  Experiment~4 uses its own fitted mixed-boundary meshes.
We set $\ell_h=h_{\rm nom}$.

The angular control error is kept separate from the space--time refinement.
For
\[
 A_K:=\{(\cos(2\pi j/K),\sin(2\pi j/K)):0\le j<K\},
 \qquad H_K(p):=\max_{a\in A_K}a\cdot p,
\]
the nearest angular direction gives
\begin{equation}
 0\le |p|-H_K(p)
 \le \eta_K|p|,
 \qquad \eta_K:=1-\cos(\pi/K).
 \label{eq:angular-control-error}
\end{equation}
For the even values of $K$ used below, adding the zero control does not change
$H_K$, because $A_K$ contains antipodal pairs and hence $H_K(p)\ge0$.
Here $\eta_{64}=1.20454\times10^{-3}$ and
$\eta_{256}=7.52982\times10^{-5}$.  The smooth manufactured solution used in
Experiments~1 and~2 satisfies $\|Du\|_\infty\le3/2$, so the Hamiltonian
residual is bounded by
$1.80682\times10^{-3}$ for $K=64$ and by
$1.12947\times10^{-4}$ for $K=256$.
For the present method, replacing $A$ by $A_K$ adds at most
$\Delta t\,\eta_K\|Du\|_\infty$ to the analytic one-step defect; the recursion
in Theorem~\ref{thm:rate} therefore adds $C\eta_K$ to the full-grid bound.
Thus the tables report the combined space--time and finite-control errors over
the displayed range.  With fixed $K$ an angular-error plateau may eventually
occur; joint asymptotic convergence to the continuous-control problem
requires $K=K(h)\to\infty$.

Every time grid ends at $T=1$: for a prescribed nominal step,
\begin{equation}
 N_T=\operatorname{round}(T/\Delta t_{\rm nom}),
 \qquad \Delta t=T/N_T.
 \label{eq:numerical-time-alignment}
\end{equation}
At $t=0$ we use
\[
 E_\infty=\max_i|U_i-u(0,x_i)|
\]
and, if $\widehat K$ is a curved auxiliary cell with geometric barycentre
$x_{\widehat K}$,
\begin{equation}
 E_1=\sum_{\widehat K}|\widehat K|
 \left|I_h[U](x_{\widehat K})-u(0,x_{\widehat K})\right|.
 \label{eq:barycentric-E1}
\end{equation}
For two consecutive dyadic levels,
\[
 p=\frac{\log(E_{\rm old}/E_{\rm new})}
          {\log(h_{{\rm nom,old}}/h_{{\rm nom,new}})}.
\]
The first displayed level is assigned ``--''.

\subsection{Experiment 1: calibrated \texorpdfstring{$k=0$}{k=0}
comparison on the unit disk}
\label{sec:disk-comparison}

Let
\[
 u(t,x_1,x_2)=(t+0.5)\sin x_1\sin x_2,\qquad
 \sigma(x)=\sqrt2
 \begin{pmatrix}\sin(x_1+x_2)\\ \cos(x_1+x_2)\end{pmatrix}.
\]
With $\mu(a)=-a$, $|a|\le1$, and $r=k=0$, set
\begin{equation}
 f=-u_t-\frac12\sigma^\top D^2u\,\sigma+|Du|.
 \label{eq:experiment2-source}
\end{equation}
On the boundary $g=\gamma\cdot Du$, first with $\gamma=n$ and then with
$\gamma=R_{-\pi/6}n$, where $R_\alpha$ denotes counterclockwise rotation
through $\alpha$.  The reference benchmark is compared after the
corresponding exact time reversal.

For comparison we independently implement the all-node formulas
(25)--(30) of~\cite{CalzolaEtAl2023}; this implementation has no separate
same-level boundary solve.  The present and reference implementations use the
same meshes, time grids, controls, boundary directions, interpolation map, and
error definitions.  We fix $\Delta t=h_{\rm nom}$.  The additional inward
offset $\bar c$ belongs only to the reference formula and is selected using
only errors of that implementation.  On the calibration levels
$h_{\rm nom}=2^{-3},2^{-4},2^{-5}$, we first test
$\bar c=0.050,0.075,\ldots,1.500$.  The score of each candidate is the minimum
of the eight successive calibration orders obtained from two refinements, two
boundary directions, and the two error norms.  This scan selects
$\bar c=0.675$, with minimum and mean scores $0.943$ and $0.979$.
A prespecified refinement over $\bar c=0.650,0.655,\ldots,0.700$ then selects
\begin{equation}
 \bar c=0.690,
 \qquad \min p_{\rm cal}=0.945,
 \qquad \operatorname{mean}p_{\rm cal}=0.979.
 \label{eq:selected-cbar}
\end{equation}
The selected value is frozen before the two finer levels
$h_{\rm nom}=2^{-6}$ and $2^{-7}$ are computed; neither the selection nor its
refinement uses any error of the present method.  The complete scan is included
with the accompanying code.

In Tables~\ref{tab:ex1-normal}--\ref{tab:ex1-oblique}, $P$ and $R$ denote the
present and calibrated reference implementations; each entry is error
(successive order).  The horizontal rule separates calibration from held-out
levels.

\begin{table}[htbp]
 \centering
 \footnotesize
 \setlength{\tabcolsep}{3.1pt}
 \renewcommand{\arraystretch}{0.94}
 \caption{Experiment~1, normal direction, $k=0$, $\Delta t=h_{\rm nom}$,
 and $\bar c=0.690$ for $R$.}
 \label{tab:ex1-normal}
 \begin{tabular}{ccccc}
  \toprule
  $h_{\rm nom}$ & $E_\infty^P(p_\infty^P)$ & $E_\infty^R(p_\infty^R)$
  & $E_1^P(p_1^P)$ & $E_1^R(p_1^R)$\\
  \midrule
  $1/8$   & $6.48845\mathrm e{-2}$ (--)    & $7.67961\mathrm e{-2}$ (--)    & $9.63684\mathrm e{-2}$ (--)    & $1.93530\mathrm e{-1}$ (--)\\
  $1/16$  & $3.74672\mathrm e{-2}$ (0.792) & $3.88512\mathrm e{-2}$ (0.983) & $5.49786\mathrm e{-2}$ (0.810) & $9.90728\mathrm e{-2}$ (0.966)\\
  $1/32$  & $2.02463\mathrm e{-2}$ (0.888) & $1.97056\mathrm e{-2}$ (0.979) & $2.93931\mathrm e{-2}$ (0.903) & $5.14623\mathrm e{-2}$ (0.945)\\
  \midrule
  $1/64$  & $1.05870\mathrm e{-2}$ (0.935) & $1.00446\mathrm e{-2}$ (0.972) & $1.52865\mathrm e{-2}$ (0.943) & $2.55335\mathrm e{-2}$ (1.011)\\
  $1/128$ & $5.46194\mathrm e{-3}$ (0.955) & $5.21256\mathrm e{-3}$ (0.946) & $7.87019\mathrm e{-3}$ (0.958) & $1.28325\mathrm e{-2}$ (0.993)\\
  \bottomrule
 \end{tabular}
\end{table}

\begin{table}[htbp]
 \centering
 \footnotesize
 \setlength{\tabcolsep}{3.1pt}
 \renewcommand{\arraystretch}{0.94}
 \caption{Experiment~1, oblique direction $\gamma=R_{-\pi/6}n$, $k=0$,
 $\Delta t=h_{\rm nom}$, and $\bar c=0.690$ for $R$.}
 \label{tab:ex1-oblique}
 \begin{tabular}{ccccc}
  \toprule
  $h_{\rm nom}$ & $E_\infty^P(p_\infty^P)$ & $E_\infty^R(p_\infty^R)$
  & $E_1^P(p_1^P)$ & $E_1^R(p_1^R)$\\
  \midrule
  $1/8$   & $8.26611\mathrm e{-2}$ (--)    & $7.83111\mathrm e{-2}$ (--)    & $1.09936\mathrm e{-1}$ (--)    & $1.43293\mathrm e{-1}$ (--)\\
  $1/16$  & $4.60009\mathrm e{-2}$ (0.846) & $3.98566\mathrm e{-2}$ (0.974) & $6.03167\mathrm e{-2}$ (0.866) & $7.44555\mathrm e{-2}$ (0.945)\\
  $1/32$  & $2.44043\mathrm e{-2}$ (0.915) & $1.97896\mathrm e{-2}$ (1.010) & $3.19274\mathrm e{-2}$ (0.918) & $3.65230\mathrm e{-2}$ (1.028)\\
  \midrule
  $1/64$  & $1.26330\mathrm e{-2}$ (0.950) & $9.94202\mathrm e{-3}$ (0.993) & $1.65578\mathrm e{-2}$ (0.947) & $1.75893\mathrm e{-2}$ (1.054)\\
  $1/128$ & $6.47359\mathrm e{-3}$ (0.965) & $6.04130\mathrm e{-3}$ (0.719) & $8.50455\mathrm e{-3}$ (0.961) & $8.68951\mathrm e{-3}$ (1.017)\\
  \bottomrule
 \end{tabular}
\end{table}

All displayed errors decrease.  After $\bar c$ is frozen, the orders on
$2^{-5}\to2^{-6}$ are $0.935$--$0.950$ for $P$ and $0.972$--$1.054$ for
$R$; on the fully held-out refinement $2^{-6}\to2^{-7}$ they are
$0.955$--$0.965$ for $P$ and $0.719$--$1.017$ for $R$.  Thus both methods are
competitive over the complete sequence, while the present method gives the
more balanced near-first-order behaviour on the finest refinement without an
additional inward-offset parameter.  These comparisons are empirical; their
relation to Theorem~\ref{thm:rate} is discussed after Experiment~2.

\FloatBarrier

\subsection{Experiment 2: variable Robin coefficient}
\label{sec:variable-robin}

We retain the exact solution and interior equation of Experiment~1 and set
\[
 k(x)=\frac12(1+x_1),\qquad
 g(t,x)=\gamma(x)\cdot Du(t,x)+k(x)u(t,x).
\]
This test exercises both the local-time attenuation and the same-level Robin
closure.

\paragraph{Angular-control sensitivity.}
To check that the fixed angular grid is not controlling the displayed
space--time errors, we freeze $h_{\rm nom}=1/128$ and
$\Delta t=h_{\rm nom}$ and vary only the number $K$ of circle directions.
The zero control is retained, so the total number of controls is $K+1$.
Table~\ref{tab:ex2-control-sensitivity} reports the error and the successive
solution difference
$\delta_K:=\|U_K-U_{2K}\|_\infty$.

\begin{table}[htbp]
 \centering
 \footnotesize
 \setlength{\tabcolsep}{4.2pt}
 \renewcommand{\arraystretch}{0.92}
 \caption{Experiment~2: angular-control sensitivity at
 $h_{\rm nom}=\Delta t=1/128$.  Here $K$ is the number of circle directions;
 the zero control is also included.}
 \label{tab:ex2-control-sensitivity}
 \begin{tabular}{cccccc}
  \toprule
  direction & $K$ & $\eta_K$ & $E_\infty$ & $E_1$ & $\delta_K$\\
  \midrule
  \multirow{4}{*}{normal}
  & 32  & $4.815273\mathrm e{-3}$ & $6.266654\mathrm e{-3}$ & $9.788281\mathrm e{-3}$ & $6.132984\mathrm e{-4}$\\
  & 64  & $1.204544\mathrm e{-3}$ & $5.704361\mathrm e{-3}$ & $8.388478\mathrm e{-3}$ & $1.595100\mathrm e{-4}$\\
  & 128 & $3.011813\mathrm e{-4}$ & $5.557243\mathrm e{-3}$ & $8.035736\mathrm e{-3}$ & $3.916117\mathrm e{-5}$\\
  & 256 & $7.529816\mathrm e{-5}$ & $5.521389\mathrm e{-3}$ & $7.946923\mathrm e{-3}$ & --\\
  \midrule
  \multirow{4}{*}{oblique}
  & 32  & $4.815273\mathrm e{-3}$ & $6.963687\mathrm e{-3}$ & $9.188504\mathrm e{-3}$ & $6.402037\mathrm e{-4}$\\
  & 64  & $1.204544\mathrm e{-3}$ & $6.367182\mathrm e{-3}$ & $7.754582\mathrm e{-3}$ & $1.627866\mathrm e{-4}$\\
  & 128 & $3.011813\mathrm e{-4}$ & $6.216870\mathrm e{-3}$ & $7.405654\mathrm e{-3}$ & $4.044163\mathrm e{-5}$\\
  & 256 & $7.529816\mathrm e{-5}$ & $6.179221\mathrm e{-3}$ & $7.319607\mathrm e{-3}$ & --\\
  \bottomrule
 \end{tabular}
\end{table}

The successive differences have angular orders $1.943$ and $2.026$ in the
normal case and $1.976$ and $2.009$ in the oblique case, consistently with
$\eta_K=O(K^{-2})$.  The zero control is never selected.  The $K=256$
solution is the finest tested discrete-control solution, not the exact
continuous-control solution.

\paragraph{Balanced time-step refinement.}
We next fix $K=256$ and compare
\[
 \Delta t_h=h_{\rm nom},\qquad
 \Delta t_{4/3,\rm nom}=C_{\rm bal} h_{\rm real}^{4/3},
 \qquad C_{\rm bal}=1.496099.
\]
The constant makes the nominal schedules agree on the coarsest mesh.  After
the terminal alignment~\eqref{eq:numerical-time-alignment}, the second
schedule has $N_T=8,20,51,128,323$ and effective successive time exponents
$1.322,1.350,1.328,1.335$.  Table~\ref{tab:ex2-robin} gives both schedules;
each entry is error (successive spatial order).

\begin{table}[htbp]
 \centering
 \scriptsize
 \setlength{\tabcolsep}{2.6pt}
 \renewcommand{\arraystretch}{0.90}
 \caption{Experiment~2: variable Robin coefficient with $K=256$ circle
 directions and the zero control.  Superscripts $h$ and $4/3$ identify
 $\Delta t_h$ and the terminal-aligned $\Delta t_{4/3}$ schedule.}
 \label{tab:ex2-robin}
 \begin{tabular}{cccccc}
  \toprule
  direction & $h_{\rm nom}$ & $E_\infty^h(p_\infty^h)$
  & $E_\infty^{4/3}(p_\infty^{4/3})$
  & $E_1^h(p_1^h)$ & $E_1^{4/3}(p_1^{4/3})$\\
  \midrule
  \multirow{5}{*}{normal}
  & $1/8$   & $7.522077\mathrm e{-2}$ (--)    & $7.522077\mathrm e{-2}$ (--)    & $1.109147\mathrm e{-1}$ (--)    & $1.109147\mathrm e{-1}$ (--)\\
  & $1/16$  & $3.996686\mathrm e{-2}$ (0.912) & $3.236509\mathrm e{-2}$ (1.217) & $5.989028\mathrm e{-2}$ (0.889) & $4.848288\mathrm e{-2}$ (1.194)\\
  & $1/32$  & $2.089982\mathrm e{-2}$ (0.935) & $1.376401\mathrm e{-2}$ (1.234) & $3.089307\mathrm e{-2}$ (0.955) & $2.020889\mathrm e{-2}$ (1.262)\\
  & $1/64$  & $1.078793\mathrm e{-2}$ (0.954) & $5.855086\mathrm e{-3}$ (1.233) & $1.568605\mathrm e{-2}$ (0.978) & $8.576933\mathrm e{-3}$ (1.236)\\
  & $1/128$ & $5.521389\mathrm e{-3}$ (0.966) & $2.469697\mathrm e{-3}$ (1.245) & $7.946923\mathrm e{-3}$ (0.981) & $3.625861\mathrm e{-3}$ (1.242)\\
  \midrule
  \multirow{5}{*}{oblique}
  & $1/8$   & $8.307404\mathrm e{-2}$ (--)    & $8.307404\mathrm e{-2}$ (--)    & $1.093986\mathrm e{-1}$ (--)    & $1.093986\mathrm e{-1}$ (--)\\
  & $1/16$  & $4.521165\mathrm e{-2}$ (0.878) & $3.698557\mathrm e{-2}$ (1.167) & $5.691244\mathrm e{-2}$ (0.943) & $4.625476\mathrm e{-2}$ (1.242)\\
  & $1/32$  & $2.358759\mathrm e{-2}$ (0.939) & $1.543429\mathrm e{-2}$ (1.261) & $2.875803\mathrm e{-2}$ (0.985) & $1.897971\mathrm e{-2}$ (1.285)\\
  & $1/64$  & $1.211431\mathrm e{-2}$ (0.961) & $6.467863\mathrm e{-3}$ (1.255) & $1.449193\mathrm e{-2}$ (0.989) & $7.980794\mathrm e{-3}$ (1.250)\\
  & $1/128$ & $6.179221\mathrm e{-3}$ (0.971) & $2.699367\mathrm e{-3}$ (1.261) & $7.319607\mathrm e{-3}$ (0.985) & $3.375085\mathrm e{-3}$ (1.242)\\
  \bottomrule
 \end{tabular}
\end{table}

\FloatBarrier

All errors decrease.  On the finest mesh the balanced schedule reduces
$E_\infty$ by $55.3\%$ and $56.3\%$ in the normal and oblique cases and
$E_1$ by $54.4\%$ and $53.9\%$, respectively.  Its final observed orders are
$1.245$ and $1.242$ (normal) and $1.261$ and $1.242$ (oblique), compared with
$0.966$--$0.985$ for $\Delta t=h_{\rm nom}$.  All associated boundary margins
are positive and every fallback count is zero.

The continuous-control problems in Experiments~1 and~2 meet the regularity
assumptions of Theorem~\ref{thm:rate}; their implementations additionally
contain the explicitly bounded finite-control defect in
\eqref{eq:angular-control-error}.  For the balanced implementation the bound
has the form $C(h^{2/3}+\eta_{256})$.  The displayed $1.242$--$1.261$ slopes
are therefore pre-asymptotic space--time observations, not a fixed-$K$
asymptotic rate or a sharper global theorem.  A possible explanation for the
strong observed behaviour is the predominantly interior branch geometry.
Only nodes in an $O(\sqrt{\Delta t})$ boundary
layer can generate exiting branches, so most branches on these nearly uniform
disk meshes are unreflected.  On such branches the reflection and local-time
corrections are absent, and the symmetric weak increments cancel the leading
odd Taylor contribution.  For the smooth solutions used here, the analytic
one-step defect is then formally $O(\Delta t^2)$; together with the
$\mathbb P_1$ interpolation error, this gives $O(\Delta t^2+h^2)$ instead of
the uniform $O(\Delta t^{3/2}+h^2)$ estimate required to cover reflected
branches as well.  This interior mechanism is consistent with improved
pre-asymptotic accuracy, but it does not by itself control the maximum norm,
because the near-boundary reflected branches still require uniform control.

\FloatBarrier

\subsection{Experiment 3: degenerate diffusion with a conical viscosity solution}
\label{sec:conical-test}

On the unit disk, let
\[
 u(t,x)=(t+0.5)(1-|x|),\qquad \sigma(x)=\sqrt2\,x,
\]
with $\mu(a)=-a$, $|a|\le1$, $r=0$, and
\[
 f(t,x)=-(1-|x|)+(t+0.5),\qquad
 k(x)=\frac12(1+x_1),\qquad g=\gamma\cdot Du+ku.
\]
The diffusion is rank one away from the origin and vanishes at the origin.
The cone is a viscosity solution.This example lies outside the classical regularity of
Theorem~\ref{thm:rate}.

\begin{table}[htbp]
 \centering
 \footnotesize
 \setlength{\tabcolsep}{5pt}
 \renewcommand{\arraystretch}{0.91}
 \caption{Experiment~3: conical viscosity solution.  Entries are error
 (successive order).}
 \label{tab:ex3-cone}
 \begin{tabular}{ccccc}
  \toprule
  direction & $\Delta t/h_{\rm nom}$ & $h_{\rm nom}$
  & $E_\infty(p_\infty)$ & $E_1(p_1)$\\
  \midrule
  \multirow{10}{*}{normal}
  & \multirow{5}{*}{$1$} & 0.1250000 & $8.6470\mathrm e{-2}$ (--) & $7.1695\mathrm e{-2}$ (--)\\
  && 0.0625000 & $4.2808\mathrm e{-2}$ (1.014) & $3.7889\mathrm e{-2}$ (0.920)\\
  && 0.0312500 & $2.2635\mathrm e{-2}$ (0.919) & $1.9341\mathrm e{-2}$ (0.970)\\
  && 0.0156250 & $1.1752\mathrm e{-2}$ (0.946) & $9.7791\mathrm e{-3}$ (0.984)\\
  && 0.0078125 & $5.8991\mathrm e{-3}$ (0.994) & $4.9266\mathrm e{-3}$ (0.989)\\
  \cmidrule(lr){2-5}
  & \multirow{5}{*}{$1/2$} & 0.1250000 & $6.6301\mathrm e{-2}$ (--) & $6.5815\mathrm e{-2}$ (--)\\
  && 0.0625000 & $3.4015\mathrm e{-2}$ (0.963) & $2.9447\mathrm e{-2}$ (1.160)\\
  && 0.0312500 & $1.8512\mathrm e{-2}$ (0.878) & $1.3567\mathrm e{-2}$ (1.118)\\
  && 0.0156250 & $1.0089\mathrm e{-2}$ (0.876) & $6.4673\mathrm e{-3}$ (1.069)\\
  && 0.0078125 & $5.1133\mathrm e{-3}$ (0.980) & $2.9992\mathrm e{-3}$ (1.109)\\
  \midrule
  \multirow{10}{*}{oblique}
  & \multirow{5}{*}{$1$} & 0.1250000 & $1.0103\mathrm e{-1}$ (--) & $8.5458\mathrm e{-2}$ (--)\\
  && 0.0625000 & $4.9714\mathrm e{-2}$ (1.023) & $4.1696\mathrm e{-2}$ (1.035)\\
  && 0.0312500 & $2.5995\mathrm e{-2}$ (0.935) & $2.0822\mathrm e{-2}$ (1.002)\\
  && 0.0156250 & $1.2951\mathrm e{-2}$ (1.005) & $1.0346\mathrm e{-2}$ (1.009)\\
  && 0.0078125 & $6.6073\mathrm e{-3}$ (0.971) & $5.0938\mathrm e{-3}$ (1.022)\\
  \cmidrule(lr){2-5}
  & \multirow{5}{*}{$1/2$} & 0.1250000 & $7.1741\mathrm e{-2}$ (--) & $7.6609\mathrm e{-2}$ (--)\\
  && 0.0625000 & $3.6819\mathrm e{-2}$ (0.962) & $3.4811\mathrm e{-2}$ (1.138)\\
  && 0.0312500 & $1.9950\mathrm e{-2}$ (0.884) & $1.6228\mathrm e{-2}$ (1.101)\\
  && 0.0156250 & $1.0357\mathrm e{-2}$ (0.946) & $7.7530\mathrm e{-3}$ (1.066)\\
  && 0.0078125 & $5.4404\mathrm e{-3}$ (0.929) & $3.5559\mathrm e{-3}$ (1.125)\\
  \bottomrule
 \end{tabular}
\end{table}

\FloatBarrier

Every error decreases.  The final $E_\infty$ slopes lie in
$0.929$--$0.994$ and the final $E_1$ slopes in $0.989$--$1.125$.
They are empirical slopes for a nonsmooth viscosity solution, not an
application of the classical error theorem.

\FloatBarrier

\subsection{Experiment 4: exploratory mixed Robin--Dirichlet problem}
\label{sec:mixed-test}

Consider
\[
 \Omega=\bigl((-1,1)\times(-0.5,0.5)\bigr)
 \setminus\overline{B_{0.2}(-0.5,0)},\qquad
 u(t,x)=e^{-t}\cos x_1\cos x_2.
\]
The rectangle has the normal Robin condition and the circular hole the exact
Dirichlet value.  We use
\[
 \sigma=(1,1)^\top,\qquad \mu(a)=-a,\qquad r=1,\qquad
 k(x)=\frac12(1+x_1),
\]
and manufacture
\begin{equation}
 f=-u_t-\frac12\sigma^\top D^2u\,\sigma+|Du|+ru,
 \qquad g=n\cdot Du+ku.
 \label{eq:experiment5-pde}
\end{equation}
An interior branch stops at its first hit of the hole; at an outer-face hit it
uses the reflected Robin transform, while outer boundary nodes use the
same-level closure.  Rectangle corners and mixed boundary types violate
Assumption~\ref{ass:H1}, so this is only an exploratory pressure test.

\begin{table}[htbp]
 \centering
 \small
 \setlength{\tabcolsep}{7pt}
 \renewcommand{\arraystretch}{0.96}
 \caption{Experiment~4: exploratory mixed Robin--Dirichlet test, with
 $\Delta t=\ell_h=h_{\rm nom}$.}
 \label{tab:ex4-mixed}
 \begin{tabular}{ccccc}
  \toprule
  $h_{\rm nom}$ & $E_\infty$ & $p_\infty$ & $E_1$ & $p_1$\\
  \midrule
  0.1250000 & $1.9541\mathrm e{-1}$ & -- & $2.2299\mathrm e{-1}$ & --\\
  0.0625000 & $9.6642\mathrm e{-2}$ & 1.016 & $1.1328\mathrm e{-1}$ & 0.977\\
  0.0312500 & $7.8498\mathrm e{-2}$ & 0.300 & $5.8719\mathrm e{-2}$ & 0.948\\
  0.0156250 & $5.8990\mathrm e{-2}$ & 0.412 & $3.1419\mathrm e{-2}$ & 0.902\\
  0.0078125 & $4.6695\mathrm e{-2}$ & 0.337 & $1.7395\mathrm e{-2}$ & 0.853\\
  \bottomrule
 \end{tabular}
\end{table}

Both errors decrease.  The $E_1$ indicator remains close to first order,
whereas the maximum-norm slope slows on the finer grids.  All recorded
fallback counts are zero and all boundary-row margins are positive.  These
diagnostics do not supply a convergence theorem for the corners or mixed
boundary types.

\FloatBarrier
	\section{Conclusion}
	\label{sec7}
	We have separated two mechanisms that approximate two different equations.
	At interior nodes, equal-probability Rademacher branches discretize the
	fixed-coefficient one-step reflected Feynman--Kac identity.
	An exterior branch is mirrored about its oblique projection, its round-trip
	distance represents boundary local time, and the Robin data enter through
	$e^{-kD}$ and $De^{-kD/2}g$.  At boundary nodes, a same-time one-sided
	equation approximates the purely spatial Robin condition over an offset
	$\ell_h$ comparable to $h$.  This separation avoids imposing the volume PDE on the
	boundary.
	
	The boundary closure is generally implicit, including for a linear equation,
	because the $\mathbb P_1$ interpolant may contain unknown boundary values from the
	current time level.  Uniform obliqueness and the standard second-order
	$\mathbb P_1$ interpolation estimate imply a
	uniform positive total weight on interior nodes.  This bound makes the
	controlled boundary map a contraction.  For fixed linear Robin data, the
	resulting matrix is a nonsingular $M$-matrix.
	The complete time-layer update is order preserving.
	
	Under the refinement condition $\Delta t,h\to0$ and
	$h^2/\Delta t\to0$, the scheme satisfies a uniform $L^\infty$ bound and
	converges to the viscosity solution by the half-relaxed-limit argument.  The proof treats a boundary-node
	recovery sequence directly through the discrete Robin equation and an
	interior-node recovery sequence through the reflected expectation identity.
	For a classical HJB solution $u\in C_b^3(\mathcal N_T)$ satisfying the
	uniform boundary-residual condition~\eqref{eq:residual-C11}, the interior error is decomposed into the
	analytic one-step defect, propagation from level $n+1$, and $\mathbb P_1$
	interpolation error.  The boundary error is estimated separately and the two
	node sets are combined only in the full-grid recursion.  Backward iteration gives
	the single estimate
	\[
	\max_{0\le n\le N_T}\max_{i\in\mathcal I_h}
	|U_i^n-u(t_n,x_i)|
	\le C\left(\Delta t^{1/2}+\frac{h^2}{\Delta t}\right).
	\]
	Balancing the two terms gives $\Delta t\asymp h^{4/3}$ and
	$O(h^{2/3})$, while $\Delta t\asymp h$ gives $O(h^{1/2})$.  No algebraic
	rate is claimed for a general continuous viscosity solution.
	
	All four two-dimensional tests were recomputed with the split boundary
	algebra, and every displayed error sequence decreases.  In the $k=0$
	same-grid comparison, a full offset scan followed by local refinement selected
	$\bar c=0.690$ using only the first three levels of the reference formula; the
	parameter was then frozen.  On the pure held-out refinement, the present
	scheme has orders $0.955$--$0.965$ across both boundary directions and both
	norms, whereas the calibrated reference formula has $0.719$--$1.017$.  Over
	the complete sequence the two methods are competitive, while the present
	scheme is more balanced on the finest refinement and requires no additional
	inward-offset coefficient.  For the continuous-control problems in
	Experiments~1 and~2, the regularity hypotheses of
	Theorem~\ref{thm:rate} hold, and the additional finite angular-control defect
	is quantified by~\eqref{eq:angular-control-error}.  In the variable-Robin
	test, doubling the angular resolution reduces the successive solution
	difference at approximately second order, consistently with
	$\eta_K=O(K^{-2})$.  With $K=256$ fixed, the terminal-aligned
	$\Delta t\propto h_{\rm real}^{4/3}$ schedule has final observed orders
	$1.242$--$1.261$ and reduces the finest-grid errors by $53.9\%$--$56.3\%$
	relative to $\Delta t=h_{\rm nom}$.  These pre-asymptotic observations are
	sharper than the conservative $O(h^{2/3})$ balanced bound but do not establish
	a higher global rate.  Since a fixed angular grid eventually produces a
	control-error floor, joint asymptotic convergence to the continuous-control
	problem requires angular refinement as $h\to0$.  The conical test has
	$0.929$--$0.994$ and $0.989$--$1.125$.  For the mixed domain, $E_1$ remains
	close to first order, whereas the maximum-norm slope slows on the finer
	cornered meshes.  Low-regularity rates for the controlled scheme and
	convergence on this nonsmooth mixed domain are not established here.
	
	\section*{Acknowledgments}
	This work was supported by the National Natural Science Foundation of China (No.~12371401).
	
	\section*{Declarations}
	
	\noindent\textbf{Competing interests.}
	The authors declare that they have no competing interests.
	
	\medskip
	\noindent\textbf{Code and data availability.}
	The accompanying self-contained MATLAB package provides one master entry for
	all displayed experiments and tables.  It also contains the full offset scan,
	the angular-control and time-step sensitivities, complete CSV/MAT output, and
	the independent implementation of formulas~(25)--(30)
	of~\cite{CalzolaEtAl2023}; the same-grid rows are labelled by scheme.

\end{document}